\newcommand{\bbZ} {{\mathbb{Z}}}   
\newcommand{\bbN} {{\mathbb{N}}}   
\newcommand{\bbR} {{\mathbb{R}}}   
\newcommand{\bbC} {{\mathbb{C}}}
\newcommand{\cal}{\mathcal}
\newcommand{\all}[2]{ \left \{\, {#1} \, : \, {#2} \, \right \} }
\newcommand{\alll}[2]{ \{\, {#1} \, : \, {#2} \,  \} }
\newcommand{\set}[1]{{\{#1\}}}
\newcommand{\abs}[1]{\left | {#1}\right | }
\newcommand{\norm}[1]{\left\| {#1}\right\|}
\newcommand{\floor}[1]{\lfloor {#1}\rfloor}
\newcommand{\clo}[1]{{\overline{#1}}}
\newcommand{\Jac}{\mathrm{Jac}}
\theoremstyle{plain}
\newtheorem{Theorem}{Theorem}[section]
\newtheorem{Proposition}[Theorem]{Proposition}
\newtheorem{Lemma}[Theorem]{Lemma}
\newtheorem{Corollary}[Theorem]{Corollary}
\theoremstyle{definition}
\newtheorem{Remark}[Theorem]{Remark}
\newtheorem*{KProblement*}{Kolmogorov's Problem for the $\epsilon$-entropy}
\newtheorem*{KProblemwidth*}{Kolmogorov's Problem for $m$-widths}
\newtheorem*{ZConjecture*}{Zakharyuta's Conjecture}
\title{New solution of a problem of Kolmogorov on width asymptotics in holomorphic function spaces}
\author{Oscar F.~Bandtlow$^{\dag}$}
\address{Oscar F.~Bandtlow, School of Mathematical Sciences,
Queen Mary University of London, London E3 4NS,
United Kingdom}
\email{o.bandtlow@qmul.ac.uk}
\author{St\'ephanie Nivoche$^{\dag\dag}$}
\address{St\'ephanie Nivoche, CNRS and Laboratoire J.-A. Dieudonn\'e U.M.R. 7351, Universit\'e C\^ote d'Azur, Parc Valrose
06108 Nice Cedex 02, France}
\email{Stephanie.NIVOCHE@univ-cotedazur.fr}
\thanks{$^{\dag}$The research of OFB was supported by the EPSRC grant EP/R012008/1. $^{\dag\dag}$The research of SN was supported by ANR grant ``ANR-14-CE34-0002-01'' for the project ``Dynamics and CR geometry''.} 
\date{29 April 2020}
\keywords{Kolmogorov widths, Kolmogorov numbers, Kolmogorov $\epsilon$-entropy, 
pluripotential theory, capacity, Toeplitz operators, Bergman spaces, Bergman-Weil formula}
\subjclass[2010]{
 Primary   41A46;
 Secondary  32A36,
 32U20, 
 32W20, 
 35P15}  
\begin{document}

\begin{abstract}
Given a domain $D$ in $\mathbb{C}^n$ and $K$ a compact subset of $D$, the set 
$\mathcal{A}_K^D$ of all restrictions of functions holomorphic on $D$ the modulus of which is 
bounded by $1$ is a compact subset of the Banach space $C(K)$ of continuous functions on $K$. 
The sequence $(d_m(\mathcal{A}_K^D))_{m\in \mathbb{N}}$ of Kolmogorov $m$-widths of 
$\mathcal{A}_K^D$ provides a measure of the degree of compactness of the set 
$\mathcal{A}_K^D$ in $C(K)$ and the study of its asymptotics has a long history, essentially going 
back to Kolmogorov's work on $\epsilon$-entropy of compact sets in the 1950s. 
In the 1980s Zakharyuta showed that for suitable $D$ and $K$ the 
asymptotics 
\begin{equation}
\label{eq:KP}
 \lim_{m\to \infty}\frac{- \log d_m(\mathcal{A}_K^D)}{m^{1/n}} = 
 2\pi \left ( \frac{n!}{C(K,D)}\right ) ^{1/n}\,,
\end{equation}
where $C(K,D)$ is the Bedford-Taylor relative capacity of $K$ in $D$
is implied by a conjecture, now known as Zakharyuta's Conjecture,
concerning the approximability of the regularised relative extremal
function of $K$ and $D$ 
by certain pluricomplex Green functions. Zakharyuta's Conjecture was proved by 
Nivoche in 2004 thus settling (\ref{eq:KP}) at the same time. \\
We shall give a new proof of the asymptotics 
(\ref{eq:KP}) for $D$ strictly hyperconvex and $K$ non-pluripolar which does not rely on Zakharyuta's Conjecture. Instead we proceed 
more directly by a two-pronged approach establishing sharp upper and lower bounds for the 
Kolmogorov widths. The lower bounds follow from concentration results of independent interest  
for the eigenvalues of a 
certain family of Toeplitz operators, while the upper bounds follow from an application of the 
Bergman-Weil formula together with an exhaustion procedure by special holomorphic polyhedra. 
\end{abstract}

\maketitle

\newpage

\tableofcontents

\newpage

\section{Introduction}

\subsection{Kolmogorov's problem}
\label{sec:1.1}

In the 1930s approximation
theory received a new impetus, when Kolmogorov \cite{Ko1}
introduced the concept 
of the width (or diameter) of a compact set: 
given a normed linear space $(X, \norm{\cdot})$ and  $C\subset X$ a
compact subset, then, for any $m\in \bbN$, the \emph{Kolmogorov
  $m$-width} of $C$ in $X$ is the quantity
\[ d_m(C,X)=\mathop{\vphantom{p}\inf}\limits_{\dim L < m}\sup_{x\in C}
\mathop{\vphantom{p}\inf}\limits_{y\in L}\|x-y\|\,,\] 
where the outermost infimum is taken over subspaces $L$ of $X$. 
The sequence $(d_m(C,X))_{m\in \mathbb{N}}$ provides a measure of how well the compact subset $C$ 
can be approximated by finite-dimensional subspaces of $X$. 

In the 1950s, Kolmogorov returned to approximation theory through his work on complexity theory, 
such as his study \cite{Ko} of Vitushkin's work on Hilbert's 13th problem about the complexity
of function spaces, in which he proved that the space of 
analytic functions of $n$ variables is ``larger" than the space 
of analytic functions of $m$ variables when $n>m$. 
Drawing inspiration from 
Shannon's information theory \cite{shannon48}, Kolmogorov \cite{Ko2}
introduced the concept of the $\epsilon$-entropy of a compact set $C$ in a metric space $X$.   
Given any $\epsilon>0$, there is a 
covering of $C$ by subsets of $X$ with diameters not exceeding $2\epsilon$. 
Denoting the smallest cardinality of such a covering by 
$N_\epsilon(C,X)$, the \emph{$\epsilon$-entropy} $H_\epsilon(C,X)$ of 
$C$ in $X$ is given by\footnote{Here `$\log$' denotes the natural logarithm, 
whereas in the original definition the logarithm with basis $2$ was used. 
} 
\[ H_\epsilon(C,X)=\log N_\epsilon(C,X)\,.\]
The family $(H_\epsilon(C,X))_{\epsilon>0}$ thus quantifies the degree of 
compactness of $C$. 

The determination of entropies and widths 
of classes of functions has several purposes. 
Firstly, it can produce new invariants making it possible to 
distinguish and classify function sets in infinite dimensional
spaces. The meaning of the fundamental concept
of ``number of variables'' 
often manifests itself in this way. Secondly, computations of widths
and entropies foster the creation of new methods of approximation. 
Thirdly, it produces stimuli for computational mathematics 
by giving directions for the creation of the most expedient algorithms
to solve practical problems. 
A more thorough discussion of this circle of ideas can be found in the original papers 
\cite{Ti2, K-T, Mi}, in the monograph \cite{Ti5}, or 
in the selected works of Kolmogorov 
on Information Theory \cite{Ko93}, containing a reprint of \cite{K-T}. 

For a fixed compact set $C$ in $X$, the computation of the exact value of 
$H_\epsilon(C,X)$ for each $\epsilon>0$ is a rather difficult task. 
The determination of the 
corresponding asymptotic order as $\epsilon$ tends to zero, however, turns out to be more manageable  
even in infinite-dimensional spaces. 
The first results in this direction are due to 
Kolmogorov and Tikhomirov \cite{K-T}, who 
determined the asymptotic order 
of the $\epsilon$-entropy of analytic functions of $n$ variables defined on 
a bounded domain in $\bbR^n$ extending analytically 
to some domain in $\bbC^n$.

More precisely, suppose we are given a domain $D$ in $\bbC^n$ 
containing a compact set $K$. Write $H^\infty(D)$ for the Banach space of all 
bounded, holomorphic functions in $D$ endowed with the sup-norm and 
$C(K)$ for the space of all continuous functions 
on the compact set $K$ equipped with the sup-norm. By Montel's theorem, 
the set ${\cal A}_K^D$ of restrictions of functions in the unit ball of 
$H^\infty(D)$ is a compact subset of $C(K)$. Kolmogorov and Tikhomirov \cite{K-T}
showed that, under natural assumptions on $D$ and $K$, the 
asymptotic order of 
$H_\epsilon({\cal A}_K^D)= H_\epsilon({\cal A}_K^D, C(K))$  
is  $\log(\epsilon^{-1})^{n+1}$, that is,  
there is a constant $M>1$ such that 
\[ M^{-1} < \frac{H_\epsilon({\cal A}_K^D)}{\left ( \log(\epsilon^{-1}) \right)^{n+1}} < M 
\quad (\forall \epsilon >0)\,.
\]
The existence of the limit
\[ \lim_{\epsilon \to 0} \frac{H_\epsilon({\cal A}_K^D)}{\left ( \log(\epsilon^{-1}) \right)^{n+1}}\,,\]
and its precise value, however, remained open. 

In the one-dimensional case, the problem of showing the existence of the limit was raised by 
Kolmogorov (see \cite[p 134]{Ko93}) and solved in the late 1950s, with various generalisations  
throughout the 1960s and 1970s. 
The quantity central to this quest turned out to be  
the capacity $C(K,D)$ of the compact set $K$ relative to the domain $D$, which we now recall. 
Suppose that we can solve the Dirichlet problem on 
$D\setminus K$. Let $u$ be the
relative extremal function for $K$ in $D$, that is, 
the unique harmonic function in $D\setminus K$, continuous on the closure of $D\setminus K$, 
equal to $0$ in 
$\partial D$ and equal to $-1$ on $\partial K$. 
With $\Gamma$ denoting a smooth contour separating $K$ from $\partial D$, and
$n$ the normal to $\Gamma$ directed from $K$ to $\partial
D$, the relative capacity $C(K,D)$ is given by 
\[ C(K,D)=\int_\Gamma \frac{\partial u(z)} {\partial n}\,|dz |\,.\] 
The solution of Kolmogorov's problem in dimension one can now be formulated as follows (see \cite{E0, Ba, L-T, W}): 

{\it Let $\partial D$ have positive logarithmic capacity (that is, $\partial D$ is non-polar) and let $\bbC \setminus D$ have a countable set of connected components. Then 
\[ \lim_{\epsilon \to 0}   \frac{H_\epsilon({\cal A}_K^D)}{\left ( \log(\epsilon^{-1}) \right )^2} 
 =\frac{C(K,D)}{2\pi}\,.\]
}

The solution of Kolmogorov's problem for functions of several variables remained elusive, although 
the solution for one variable makes it possible to solve it for several variables 
in special cases, for instance when $K$ and $D$ are Cartesian products of one-dimensional sets 
(see \cite{Z1,Z2}).

In the 1980s, with the development of pluripotential theory, in particular with the introduction of 
different types of extremal plurisubharmonic functions 
with respect to the complex Monge-Amp\`ere operator, the setting of the 
problem could be formulated precisely. For this we need to recall some definitions and properties relating to 
relative extremal functions and relative capacity.

If $D$ is an open set in $\bbC^n$ and $E$ a subset of $D$, the
{\it relative extremal function} for $E$ in $D$ (see \cite{Z3, Be2, K01, K1, Si, Sa, B-T2}) 
is defined as
\[ u_{E,D}(z)=\sup 
\all{v(z)}{v \text{ is psh on $D$}, \, v|_E \le -1, \, v \le 0}\quad (z \in D)\,.\]
 
 Here and in the following, we write `psh' for `plurisubharmonic',  
a notion that replaces the notion of subharmonicity in one variable. 
It turns out that the upper semicontinuous regularisation $u_{E,D}^*$ of $u_{E,D}$ 
is psh on $D$. 
In one variable, $u_{E,D}^*$ is closely related to harmonic measure. 

In several variables, 
the natural context for the study of this function is provided by hyperconvex domains: 
a domain $D$ in $\bbC^{n}$ is said to be \emph{hyperconvex} 
if there exists a continuous plurisubharmonic exhaustion
function $\varrho :D\to (-\infty,0)$. In one dimension, a domain $D$ is hyperconvex if and only 
if we are able to solve the Dirichlet problem on $D$. 
In several variables, every hyperconvex domain is pseudoconvex (or holomorphically convex) 
and every pseudoconvex domain is the union of an increasing sequence
of hyperconvex domains. Note that if $D$ is an open set and $E$ a non-pluripolar relatively 
compact subset of $D$, then 
$D$ is hyperconvex if and only if for any point $w\in \partial D$ we have 
$\lim_{z \to w} u_{E,D}(z) =0$.

For later use, we note that if $D$ is a bounded hyperconvex open set and $K\subset D$ is a 
compact set, then we say that $K$ is {\it regular} in $D$ if $u^*_{K,D}$ is a
continuous function.

In one complex variable, $u_{K,D}$ is harmonic in $D\setminus K$ and $\Delta u_{K,D}$ is a
positive measure supported on $K$. In several variables, the Laplace operator is replaced 
by the \emph{complex Monge-Amp\`ere operator} $(dd^c)^n$, 
defined as the $n$th exterior power of $dd^c=2i\partial \bar{\partial}$, that is, 
\[ (dd^c)^n= \underbrace{dd^c \land \ldots \land dd^c}_{\text{$n$ times}}\,.\] 
Here, $d=\partial +\bar{\partial}$ and $d^c= i(\bar{\partial}-\partial)$. 

In one variable the complex Monge-Amp\`ere operator
is equal to the Laplace operator $\Delta$, since $dd^c =\Delta\, dx\land dy$ in $\bbR^2$ or $\bbC$.
In several variables, if $D$ is a hyperconvex domain in
$\bbC^n$ containing a compact subset $K$, then $u^*_{K,D}$ 
is maximal on $D \setminus K$ and $(dd^c u^*_{K,D})^n=0$ on $D \setminus K$. In this 
case, the complex Monge-Amp\`ere operator $(dd^c u^*_{K,D})^n$ is well defined and turns out to 
be a positive measure supported on $K$ \cite[Section 4.5]{K2}.

The \emph{relative capacity} of $K$ in $D$ (see \cite{Be1, B-T2}) is defined as 
\[ C(K,D)=\sup\all{\int_K (dd^c u)^n}{u \in PSH(D,(-1,0))}\,,\] 
where $PSH(\Omega, I)$ denotes the set of all psh functions on a domain $\Omega$ in 
$\mathbb{C}^n$ with values in an interval $I\subset [-\infty,+\infty)$.

It turns out that for $D$ a hyperconvex domain in $\bbC^n$ and $K$ a compact subset of $D$, the 
relative extremal function and the relative capacity are related as follows \cite[Section 4.6]{K2}:
\[ C(K,D)=\int_D (dd^c u^*_{K,D})^n = \int_K (dd^c u^*_{K,D})^n\,.\]
Note that $C(K,D)<\infty$ by the Chern-Levine-Nirenberg estimate (see, for example, 
\cite[Proposition~3.4.2]{K2}). Note also that the previous definition of 
$C(K,D)$ in dimension one coincides with the more 
general one given above, since 
\[ \int_\Gamma \frac{\partial u(z)}{\partial n}\,|dz|  = \int_K \Delta u\,dx dy\,\]
by Green's formula.

In the 1980s, using the generalisation of the notion of relative capacity to higher dimensions given 
above, Zakharyuta \cite{Z1} formulated a more precise version of Kolmogorov's problem. 

\begin{KProblement*} For $D$ a domain in $\mathbb{C}^n$ and $K$ a compact subset of $D$ 
show that  
\begin{equation}
\label{eq:KProblement}
\lim_{\epsilon \to 0} 
 \frac{H_\epsilon({\cal A}_K^D)}{(\log(\epsilon^{-1}))^{n+1}}
= \frac{2C(K,D)}{(2\pi)^n (n+1)!}\,.
\end{equation}
\end{KProblement*}

In \cite{Z1} Zakharyuta provided a sketch that Kolmogorov's problem could be solved 
provided that a certain conjecture, now known as Zakharyuta's Conjecture, could be established. 
More detailed accounts of this reduction were provided in \cite{Z2, Z9, Z11}. 
Zakharyuta's Conjecture concerns the approximability of the regularised relative extremal function 
of $K$ and $D$ by pluricomplex Green functions in $D$ with logarithmic poles in $K$. The precise 
setting of this conjecture as well as the definitions and properties of these functions can be 
found in \cite{N3,N4}.

This conjecture was proved in the one-dimensional case by Skiba and Zakharyuta 
\cite{Z-S} and in the multi-dimensional case by Nivoche \cite{N3,N4} under the hypothesis that 
the domain $D$ be bounded and hyperconvex and that the compact set $K$ be regular in $D$, 
a rather natural setting, since in this case,  
the relative extremal function and the pluricomplex Green function are continuous psh functions 
tending to zero on the boundary of the domain.
 
As mentioned earlier, there is a close connection between Zakharyuta's Conjecture and 
Kolmogorov's problem. Indeed, in order to solve Kolmogorov's problem, it is sufficient to prove 
that Zakharyuta's Conjecture is true, as was shown in 
\cite{Z-S} for $n=1$ and in \cite{Z1} (see also \cite{Z9, Z11}) for $n>1$, provided that 
$K$ be regular in $D$ with non-zero Lebesgue measure, and that the 
domain $D$ be strictly hyperconvex, a rather 
natural notion that is defined as follows.

A domain $D \in \mathbb{C}^n$ is said to be 
\emph{strictly hyperconvex} if there exists a bounded domain 
$\Omega$ and a continuous exhaustion function $\varrho \in PSH( \Omega, (-\infty,1))$ 
such that $D=\all{z\in\Omega}{\varrho (z)<0 }$.  Note that any strictly pseudoconvex domain is 
strictly hyperconvex. 

All in all, it follows from \cite{Z1} and \cite{N4} that Kolmogorov's Problem for the 
$\epsilon$-entropy is solved, for $D$ strictly hyperconvex and $K$ regular in $D$ with positive 
Lebesgue measure.

\subsection{A new solution of Kolmogorov's problem}
\label{sec:1.2}

In this paper we will provide a new self-contained solution of Kolmogorov's problem, which does 
not rely on Zakharyuta's Conjecture, but instead proceeds more directly, and, at the same time, 
makes the required assumptions explicit and transparent. 

It turns out that Kolmogorov's Problem 
on the asymptotics of $H_\epsilon( \mathcal{A}_K^D)$ as $\epsilon$ tends to zero is 
in fact equivalent to the following problem on the asymptotics of the Kolmogorov widths 
$d_m(\mathcal{A}_K^D)=d_m(\mathcal{A}_K^D,C(K))$ as $m$ tends to infinity. 

\begin{KProblemwidth*} For $D$ a domain in $\mathbb{C}^n$ and $K$ a
  compact subset of $D$ 
show that 
\begin{equation}
\label{eq:KProblemwidth}
\lim_{m\to \infty} \frac{-\log d_m({\cal A}_K^D)}{m^{1/n}} = 
  2\pi\left(\frac{ n!}{C(K,D)}{}\right)^{1/n}\,.
\end{equation}
\end{KProblemwidth*}
The two problems of Kolmogorov are equivalent in the sense that if the 
limit (\ref{eq:KProblement}) exists then so 
does the limit (\ref{eq:KProblemwidth}) and \textit{vice versa}. For a complete proof of this fact, 
see \cite{Z10}, which uses ideas from \cite{Mi} and \cite{L-T}. 

Our approach to solve Kolmogorov's Problem for $m$-widths, 
and hence for the $\epsilon$-entropy, will proceed as follows. 
In Section~\ref{sec:2}, we will establish sharp lower bounds for the Kolmogorov widths by 
studying the eigenvalue distribution of a family of Toeplitz operators defined on a family of 
Bergman spaces. Unfortunately this method does not appear to provide sharp upper bounds. 
As a result, another method will be used in Section~\ref{sec:3} to establish sharp upper bounds 
for the Kolmogorov widths, first in the special case where $D$ and $K$ are special holomorphic 
polyhedra using the Bergman-Weil formula, then in the general case exploiting refinements of 
approximation arguments from \cite{N4}. Curiously enough, this method does not seem to 
provide sharp lower bounds. 

All in all, we shall establish the following result. 
\begin{Theorem} \label{theor10} 
Let $D$ be a domain in $\mathbb{C}^n$ and $K$ a compact subset of $D$. 
\begin{itemize}
\item[(i)] If $D$ is strictly hyperconvex and $K$ is non-pluripolar, 
then  
\[ \limsup_{m \to \infty}  \frac{-\log d_m({\cal A}_K^D)}{m^{1/n}} \le 2\pi \left (\frac{ n!}{C(K,D)}
\right )^{1/n}\,.\]
\item[(ii)] If $D$ is bounded hyperconvex, then 
\[ \liminf_{m \to \infty}  \frac{-\log d_m({\cal A}_K^D)}{m^{1/n}} \ge 2\pi \left (\frac{ n!}{C(K,D)}
\right )^{1/n}\,.\]
\end{itemize}
In particular, if $D$ is strictly hyperconvex and $K$ is non-pluripolar, then 
\[ \lim_{m \to \infty}  \frac{-\log d_m({\cal A}_K^D)}{m^{1/n}} = 2\pi \left (\frac{ n!}{C(K,D)}
\right )^{1/n}\,.\]
\end{Theorem}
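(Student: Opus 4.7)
The theorem splits cleanly into a lower bound $d_m(\mathcal{A}_K^D) \gtrsim \exp(-2\pi(n!/C(K,D))^{1/n}\, m^{1/n})$ (equivalent to~(i)) and a matching upper bound (equivalent to~(ii)); the combined limit is then immediate. Following the introduction, I would attack these by completely different methods: a spectral/Hilbert-space argument for~(i) based on Toeplitz operators, and an explicit constructive argument for~(ii) based on the Bergman--Weil representation on analytic polyhedra.

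For~(i), the plan is to dominate $d_m(\mathcal{A}_K^D)$ from below by a Hilbert-space approximation quantity governed by the spectrum of a Toeplitz operator. Concretely, for $D$ bounded hyperconvex let $i_K \colon A^2(D) \to L^2(K,d\mu)$ be the restriction map for an appropriate measure $\mu$ on $K$; then $T_K := i_K^* i_K$ is a positive Toeplitz operator on the Bergman space, and the Courant--Fischer minimax principle gives $d_m(i_K) \ge \sqrt{\lambda_m(T_K)}$. A standard comparison between $A^2$- and $H^\infty$-norms (Cauchy estimates on $K$, Bergman reproducing inequality on $D$) costs only a polynomial factor in $m$, so that $-\log d_m(\mathcal{A}_K^D) \le -\tfrac{1}{2}\log \lambda_m(T_K) + O(\log m)$. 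The bulk of the work is then establishing a Szeg\H{o}-type asymptotic $-\log \lambda_m(T_K) \sim 4\pi(n!/C(K,D))^{1/n}\, m^{1/n}$, in which the capacity enters via the total Monge--Amp\`ere mass $\int_K (dd^c u^*_{K,D})^n = C(K,D)$ and the precise constant arises from a volume count in Monge--Amp\`ere ``phase space''. Strict hyperconvexity of $D$ is what guarantees that $u^*_{K,D}$ is continuous up to $\partial D$, and non-pluripolarity of $K$ forces $C(K,D)>0$, so both hypotheses of~(i) are exactly what is needed. The hard part here, as I see it, is precisely this eigenvalue asymptotic: getting the sharp constant demands off-diagonal Bergman-kernel asymptotics with genuine pluripotential input, beyond soft functional-analytic estimates.

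For~(ii), I would proceed constructively. First treat the case where $D$ is a special holomorphic polyhedron $D=\{z\in\Omega : |f_j(z)|<1,\ j=1,\dots,N\}$: the Bergman--Weil integral representation writes every $f\in H^\infty(D)$ as an integral over the Weil skeleton against a kernel that expands in the monomials $f^\alpha = \prod_j f_j^{\alpha_j}$. Truncating this expansion at total multi-degree $\le d$ yields a finite-dimensional approximating subspace of $C(K)$ of dimension $\asymp d^n$, with sup-norm error on $K$ controlled by the tail $\sum_{|\alpha|>d} \max_K |f^\alpha|$. Optimising $d$ against $m\asymp d^n$ produces $d_m(\mathcal{A}_K^D) \le \exp(-\beta_D m^{1/n})$, where $\beta_D$ is determined by the Monge--Amp\`ere mass of $\max_j \log|f_j|$ on $K$. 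For a general bounded hyperconvex $D$ one then exhausts $D$ from inside by a sequence of special polyhedra $P_\nu \supset K$ with $P_\nu \Subset D$ and Monge--Amp\`ere masses converging to $C(K,D)$; monotonicity of $d_m$ in the domain transfers the polyhedral upper bounds to $D$, and the sharp constant emerges in the limit. The exhaustion can be built by adapting the machinery of~\cite{N4}, applied directly to $u^*_{K,D}$ rather than to Green functions --- a simpler target than the one that was needed for Zakharyuta's Conjecture. The main obstacle here is engineering the exhaustion so that $K\subset P_\nu\Subset D$, that the defining functions $f_j^{(\nu)}$ extend holomorphically to a common neighbourhood, and that their Monge--Amp\`ere masses recover $C(K,D)$ in the limit with quantitative control sufficient to give the sharp constant $2\pi(n!/C(K,D))^{1/n}$.
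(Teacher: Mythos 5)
Your two-pronged strategy is exactly the one the paper follows: Toeplitz-operator spectral asymptotics on Bergman spaces for the lower bound on $d_m$ (part (i)), and Bergman--Weil truncation on special holomorphic polyhedra combined with a polyhedral exhaustion adapted from \cite{N4} for the upper bound (part (ii)). Your sketch of (ii) is essentially the paper's Section~3, including the points that actually matter for sharpness: the inner set must itself be a sub-polyhedron of the \emph{same} map so that the tail of the kernel expansion decays, the rank of the truncation is $m_0$ times the lattice count (with $m_0$ the covering multiplicity, which cancels against the $m_0$ in the capacity of the polyhedral condenser), and the capacities of the approximating condensers must converge to $C(K,D)$. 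Two steps in part (i), however, are glossed over in a way that hides genuinely necessary ideas.

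First, the passage from the $L^2$-setting to the $H^\infty$/sup-norm setting is not a ``polynomial factor in $m$'' on a fixed domain: there is no inequality $\|f\|_{H^\infty(D)}\lesssim\|f\|_{H^2(D)}$, so the factorisation $H^2(D')\hookrightarrow H^\infty(D)\hookrightarrow A(K)\hookrightarrow L^2(K)$ forces you to pass to a strictly larger domain $D'\supset\overline{D}$. The spectral argument then produces the constant $2\pi(n!/C(K,D'))^{1/n}$, and you must show $C(K,D_j)\to C(K,D)$ along a decreasing sequence $D_j\downarrow D$. This is precisely where strict hyperconvexity is used (it provides the outer exhaustion $D_j=\{\varrho<1/j\}$ and the continuity of the capacity) --- not, as you suggest, to make $u^*_{K,D}$ continuous up to $\partial D$, which it need not be under the stated hypotheses. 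Second, your restriction operator $i_K:A^2(D)\to L^2(K,d\mu)$ is the zero operator when $\mu$ is Lebesgue measure and $m(K)=0$, yet (i) is asserted for every non-pluripolar $K$; replacing $\mu$ by an equilibrium-type measure destroys the Engli\v{s} kernel asymptotics your Szeg\H{o}-type estimate relies on. The paper resolves this by a separate reduction: pass to the sublevel compacts $K_c=\{u_{K,D}\le -1+c\}$, which have nonempty interior, transfer bounds from $K_c$ back to $K$ via the Two Constants Theorem ($\|f\|_{K_c}\le\|f\|_K^{1-c}$ for $f$ in the unit ball of $H^\infty(D)$, applied through Gelfand numbers), and use $C(K_c,D)=C(K,D)/(1-c)^n$ to recover the sharp constant as $c\to0$. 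Without some such device your argument proves (i) only for $K$ of positive Lebesgue measure. Finally, note that only the one-sided bound $\limsup_m(-\log\lambda_m)/m^{1/n}\le 4\pi(n!/C(K,D))^{1/n}$ is needed; the full two-sided asymptotic you announce is stronger than what the trace-comparison argument yields, and the paper explicitly does not obtain the opposite direction by this method.
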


\subsection{Preliminaries and notation} 
\label{sec:1.3}
For $D$ an open subset of $\mathbb{C}^n$ and $K$ a compact subset
of $D$ we write 
\begin{align*}
\mathcal{O}(D)&=\all{f:D\to \mathbb{C}}{\text{$f$ holomorphic on
    $D$}}\,,\\
\mathcal{O}(K)&=\all{f:K\to \mathbb{C}}{\text{$f$ holomorphic on
    a neighbourhood of $K$}}\,.
\end{align*}

We use $H^\infty(D)$ for the Banach space of bounded holomorphic
functions on $D$ equipped with the supremum norm on $D$ and 
$A(K)$ for the completion of $\mathcal{O}(K)$ in the Banach space
$C(K)$ of continuous functions on $K$. 

For the derivation of our bounds on the Kolmogorov widths
$d_m({\cal A}_K^D)$ in Sections~\ref{sec:2} and \ref{sec:3} it will be
convenient to introduce the following generalisation of the Kolmogorov
widths to bounded operators on a Banach space. 

For $T:X\to Y$ a bounded operator with $X$ and $Y$ Banach spaces, 
we associate with it the sequence
$(d_m(T))_{m\in \bbN}$ of \emph{Kolmogorov numbers} given by  
\[ d_m(T)=\mathop{\vphantom{p}\inf}\limits_{\dim L < m}\sup_{\|x\|_X\leq 1}
\mathop{\vphantom{p}\inf}\limits_{y\in L}\|Tx-y\|_Y\,,
\] 
where the outermost infimum is taken over subspaces $L$ of
$Y$. It turns out that $\lim_{m\to \infty}d_m(T)=0$ if and only if $T$
is compact (see, for example, \cite[Proposition~2.2.1]{CS}), so the
sequence $(d_m(T))_{m\in \mathbb{N}}$ provides a measure of the degree of
compactness of $T$. 

In order to link the two notions observe that if we define 
the \emph{canonical mapping} 
\[ J:H^\infty(D) \to A(K) \]
\[ Jf=f|_K \]
then $\mathcal{A}_K^D$ is the image under $J$ of the unit ball $B_{H^\infty(D)}$
in $H^\infty(D)$ and 
\[ d_m(J)= d_m(J(B_{H^\infty(D) }),A(K)) 
=d_m(\mathcal{A}^D_K) 
\quad (\forall m \in \mathbb{N})\,,\] 
so in order to investigate Kolmogorov's Problem on $m$-widths it
suffices to study the asymptotics of the Kolmogorov numbers of the canonical
mapping $J:H^\infty(D) \to A(K)$. Note that the canonical mapping is
in fact an embedding if $D$ is a domain and $K$ is a set of uniqueness (which is, for example, the case if $K$ is non-pluripolar).  

For later use, we associate two more sequences with the bounded operator 
$T:X\to Y$ between Banach spaces. 
The sequence $(c_m(T))_{m\in \mathbb{N}}$ of \emph{Gelfand 
numbers} 
\[ c_m(T)=\mathop{\vphantom{p}\inf}\limits_{\operatorname{codim} L< m}
\, \sup_{\substack{x\in L\\ \|x\|_X\leq 1}}\|Tx\|_Y\,,\]
where the infimum is taken over closed subspaces $L$ of $X$, which, like the sequence 
of Kolmogorov numbers, also quantifies compactness of~$T$, and the 
sequence  $(a_m(T))_{m\in \bbN}$ of \emph{approximation numbers} 
given by 
\[ a_m(T)=\inf \all{\|T-F\|_{X\to Y}}{\text{$F:X\to Y$ with $\mathrm{rank}(F)<m$}}\,,\]
which quantifies the degree of approximability of $T$ by operators of
finite rank. It turns out that (see, for example, \cite[Equation~(2.2.12) and Proposition~2.4.6]{CS}) 
\begin{equation}
\label{eq:dmam}
d_m(T) \le a_m(T) \le \sqrt{2m} d_m(T)\quad (\forall m \in
\mathbb{N})\,,
\end{equation}
and that (see, for example, \cite[Equation~(2.3.15) and Proposition~2.3.4]{CS}) 
\begin{equation}
\label{eq:cmam}
c_m(T) \le a_m(T) \le \sqrt{2m} c_m(T)\quad (\forall m \in
\mathbb{N})\,.
\end{equation}
which implies that the Kolmogorov numbers decay at a certain
stretched exponential speed precisely if the approximation numbers 
and, in turn, the Gelfand numbers do, the speeds being the same in all cases. 

We also note that all three sequences enjoy the
following submultiplicativity property. If $W$ and $Z$ are Banach spaces and
$S:W\to X$ and $R:Y\to Z$ are bounded operators, then  
\begin{equation}
\label{eq:dmsubm}
s_m(RTS)\leq \|R\|_{Y\to Z} s_m(T) \| S\|_{W\to X}\quad (\forall
m\in \mathbb{N})\,,
\end{equation} 
where $\|R\|_{Y\to Z}$ and $\|S\|_{W\to X}$ denote the operator norms
of $R$ and $S$, respectively, and where $s_m(T)$ denotes any of the Kolmogorov, 
Gelfand or approximation numbers (see, for example,
\cite[Theorems~2.3.3, 2.4.3, 2.5.3 ]{pietsch87}). 

Finally, we note that in the particular case where $X$ and $Y$ are
Hilbert spaces, then the 
Kolmogorov numbers, Gelfand numbers and approximation numbers  
coincide (see, for example, \cite[Theorem
2.11.9]{pietsch87}). 

\section{Lower bound for the Kolmogorov widths}
\label{sec:2}

In the original formulation of Kolmogorov's problem of the asymptotics of the $m$-widths
 $d_m({\cal A}_K^D)$, all spaces are equipped with the supremum norm. 
In the case where $D$ is a strictly hyperconvex domain containing a holomorphically 
convex 
and regular compact subset $K$ of positive Lebesgue measure,  Zakharyuta 
\cite{Z1,Z2} (see also 
\cite{Z9,Z11}, for a more recent exposition) and  Aytuna \cite{Ay1} have developed a 
rather sophisticated theory 
which guarantees that the asymptotics of Kolmogorov widths 
of natural spaces of holomorphic functions associated with $D$ and $K$ coincide, including Banach spaces equipped with the supremum norm as well as Hilbert spaces 
with a weighted $L^2$-norm, where the weight arises from a bounded psh function on $D$. Indeed, in this case, there exist pairs 
$(H_0,H_1)$ of Hilbert spaces which are adherent to the pair 
$(A(K),H^\infty(D))$ with norms weaker than the supremum norm 
(see for instance Lemma $4.12$ and Corollary $4.13$ in \cite{Z11}).

In Subsections~\ref{sec:2.1}, \ref{sec:2.2} and \ref{sec:2.3}, 
we will develop an $L^2$-approach for the asymptotics of Kolmogorov 
$m$-widths for bounded hyperconvex $D$ and $K\subset D$ compact with positive 
Lebesgue measure,  which relies 
on studying the eigenvalue asymptotics of a sequence of compact Toeplitz operators on 
certain 
Bergman spaces, which in turn relies on asymptotic bounds for Bergman kernels. 
In Subection~\ref{sec:2.4}, we shall 
connect the $L^2$-bounds with the usual supremum norm bounds in the original formulation of 
Kolmogorov's problem, under the slightly stronger hypothesis that $D$ be strictly 
hyperconvex. We will not rely on the theory developed by Zakharyuta and Aytuna, preferring instead to use simple bounds 
for Kolmogorov widths coupled with approximation arguments to obtain sharp lower 
bounds for the $m$-widths $d_m({\cal A}_K^D)$.
Finally, in Subsection~\ref{sec:2.5} we will generalise the result from Subsection~\ref{sec:2.4} to allow $K$ to be non-pluripolar.   

\subsection{Spectral asymptotics of Toeplitz operators} 
\label{sec:2.1}
Let $D$ be a domain in $\mathbb{C}^n$ and let $L^\infty(D)$ denote the Banach space of
complex-valued essentially bounded functions on $D$ equipped with the essential supremum 
norm $\norm{\cdot}_{\infty}$. 
For $\varphi:D\to \mathbb{R}$ a bounded measurable function, 
we write $L^2_\varphi(D)$ for the weighted $L^2$-space of 
Lebesgue measurable functions on $D$ equipped with the norm 
\[ \norm{f}_{L^2_\varphi(D)}^2=\int_D\abs{f}^2e^{-2\varphi}\,dm=
\int_D\abs{f}^2\,dm_\varphi\,, \]
where $m$ denotes $2n$-dimensional Lebesgue measure on $\mathbb{C}^n$, and $dm_\varphi=e^{-2\varphi}\,dm$.

The corresponding weighted Bergman space will be denoted by $H^2_\varphi(D)$, that is, 
\[ H^2_\varphi(D)=\all{f\in L^2_\varphi(D)}{\text{$f$ holomorphic on $D$}}\,. \]
Note that since $\varphi$ is bounded, the weighted Bergman space $H^2_\varphi(D)$ and the 
(unweighted) standard Bergman space $H^2_0(D)$, or simply $H^2(D)$, are isomorphic as Banach spaces. 
Thus, point-evaluation $f\mapsto f(z)$ is continuous on $H^2_\varphi(D)$ for every $z\in D$ and 
$H^2_\varphi(D)$ is a reproducing kernel Hilbert space, the kernel of which we denote by 
$B_{D,\varphi}$ or simply $B_{\varphi}$ if the domain $D$ is understood. Thus, 
$B_\varphi:D\times D \to \mathbb{C}$ with 
\[ f(z) = \int_D B_\varphi(z,\zeta)f(\zeta)\,dm_\varphi(\zeta) \quad 
(\forall f\in H^2_\varphi(D), \forall z\in D)\,.\]
In particular, for each $\zeta \in D$, the function 
$z\mapsto B_\varphi(z,\zeta)$ is holomorphic and 
$\overline{B_\varphi (z,\zeta)}=B_\varphi (\zeta,z)$. Moreover, the reproducing kernel can 
be written 
\[ B_\varphi(z,\zeta)=\sum_me_m(z)\overline{e_m(\zeta)}\,,\]
where $(e_m)$ is an orthonormal basis of $H^2_\varphi(D)$ and the sum converges 
uniformly on compact subsets of $D\times D$. 

Using the Bergman kernel $B_\varphi$, the orthogonal projection
\[ P_\varphi:L^2_\varphi(D)  \twoheadrightarrow H^2_\varphi(D)\,,\]
known as Bergman projection in this context, can be written 
\[ P_\varphi f(z) = \int_{D} B_\varphi (z,\zeta) f(\zeta)\,dm_\varphi(\zeta)\,.\]
Note that if $J_\varphi$ denotes the natural embedding of $H^2_\varphi(D)$ in 
$L^2_\varphi(D)$ then $P_\varphi$ is the adjoint of $J_\varphi$, that is, 
\[ P_\varphi^*=J_\varphi\,.\]

For $\chi$ in $L^\infty(D)$ we write 
$M_\chi$ for the corresponding multiplication operator on $L^2_\varphi(D)$, that is, 
\[ M_\chi:L^2_\varphi(D) \to L^2_\varphi(D) \]
\[ M_\chi f=\chi \cdot f\,.\]

Note that $M_\chi$ is bounded with operator norm $\|M_\chi\|=\|\chi\|_{\infty}$ and 
that $M_\chi$ is the zero operator precisely when the support of 
$\chi$ is a Lebesgue null set. Ultimately, this is the reason why we require $K$ to 
have non-zero Lebesgue measure for this and the following three subsections. 

The compression of  $M_\chi$ to $H^2_\varphi(D)$, denoted by $T_{\chi,\varphi}$, 
is known as Toeplitz operator with symbol $\chi$ in this context, that is, 
\[ T_{\chi,\varphi}:H^2_\varphi(D)\to H^2_\varphi(D) \]
\[ T_{\chi,\varphi}=P_\varphi M_\chi J_\varphi \,. \]
Clearly, we have for $f\in H^2_\varphi(D)$
\[ T_{\chi,\varphi}f(z)= 
\int_{D} B_\varphi (z,\zeta) \chi(\zeta) f(\zeta)\,dm_\varphi(\zeta)\,.\]

We shall now collect some properties of Toeplitz operators on $H^2_\varphi(D)$ which are 
fairly standard but difficult to find in the literature in the stated generality. We start with 
positivity. 

\begin{Lemma}
\label{lem:positivity}
If $\chi \in L^\infty(D)$ is real-valued and non-negative then $T_{\chi,\varphi}$ is a bounded, 
self-adjoint, positive operator. In particular, 
the spectrum of $T_{\chi,\varphi}$ is contained in $[0,\|\chi\|_{\infty}]$.  
\end{Lemma}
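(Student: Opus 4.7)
The plan is to exploit the factorisation $T_{\chi,\varphi}=P_\varphi M_\chi J_\varphi$ together with the adjoint relation $P_\varphi^{\ast}=J_\varphi$ stated immediately before the lemma. Each claim (boundedness, self-adjointness, positivity, location of the spectrum) will then reduce to a short calculation on inner products in $L^2_\varphi(D)$, and the real-valuedness/non-negativity of $\chi$ will enter only at the very end.

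First I would establish boundedness: $J_\varphi$ is an isometric embedding and $P_\varphi$ is an orthogonal projection (hence of norm one), while it is noted in the paper that $\|M_\chi\|=\|\chi\|_\infty$. Composing gives $\|T_{\chi,\varphi}\|\le \|\chi\|_\infty$ by submultiplicativity of the operator norm. For self-adjointness, I would take $f,g\in H^2_\varphi(D)$ and move $P_\varphi$ across the inner product via $P_\varphi^{\ast}=J_\varphi$ to obtain
\[
\langle T_{\chi,\varphi}f,g\rangle_{H^2_\varphi}
=\langle P_\varphi M_\chi J_\varphi f,g\rangle_{H^2_\varphi}
=\langle M_\chi J_\varphi f,J_\varphi g\rangle_{L^2_\varphi}.
\]
Because $\chi$ is real-valued, $M_\chi$ is self-adjoint on $L^2_\varphi(D)$ (a one-line check from the defining integral), so the last expression equals $\langle J_\varphi f, M_\chi J_\varphi g\rangle_{L^2_\varphi}=\langle f,T_{\chi,\varphi}g\rangle_{H^2_\varphi}$ after pushing $J_\varphi$ back across by the same adjoint relation.

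Positivity then follows by specialising to $f=g$: the identity above gives
\[
\langle T_{\chi,\varphi}f,f\rangle_{H^2_\varphi}
=\int_D \chi(\zeta)\,|f(\zeta)|^2\,dm_\varphi(\zeta)\ge 0,
\]
since $\chi\ge 0$ a.e. Finally, for the spectral inclusion, a bounded self-adjoint positive operator has spectrum contained in $[0,\|T_{\chi,\varphi}\|]$, and combining this with the bound $\|T_{\chi,\varphi}\|\le \|\chi\|_\infty$ from the first step yields $\sigma(T_{\chi,\varphi})\subset[0,\|\chi\|_\infty]$.

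There is essentially no hard step here: the whole statement is a formal consequence of $P_\varphi^{\ast}=J_\varphi$ and the obvious properties of the multiplication operator, and the mild care needed is only to keep track of which inner product (on $H^2_\varphi(D)$ or on $L^2_\varphi(D)$) is being used when transferring operators via adjoints.
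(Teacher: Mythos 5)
Your proof is correct and takes essentially the same approach as the paper, resting on the adjoint relation $P_\varphi^{\ast}=J_\varphi$ and the elementary properties of $M_\chi$. The only cosmetic difference is that the paper writes $\chi=\psi^2$ to exhibit $T_{\chi,\varphi}=(M_\psi J_\varphi)^{\ast}M_\psi J_\varphi$, making self-adjointness and positivity automatic, whereas you verify the same facts by computing the quadratic form $\langle T_{\chi,\varphi}f,f\rangle=\int_D\chi|f|^2\,dm_\varphi$ directly.
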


\begin{proof}
Since $M_\chi$ is bounded, the operator $T_{\chi,\varphi}$ is also bounded.  
Moreover, as $\chi$ is real-valued and non-negative, there is a real-valued 
$\psi \in L^\infty(D)$ with $\chi=\psi^2$. Thus $M_\psi= M_\psi^*$, and we have 
\[ T_{\chi,\varphi}=P_\varphi M_\psi^2 J_\varphi=(M_\psi J_\varphi)^*M_\psi J_\varphi\,. \]
Hence $T_{\chi,\varphi}$ is self-adjoint and positive, and its spectrum is contained in $[0,\infty)$. 
Moreover, as $\norm{T_{\chi,\varphi}}\leq \norm{M_\chi}=\|\chi\|_{\infty}$ the remaining assertion follows. 
\end{proof}

Next we turn to compactness properties of $T_{\chi,\varphi}$.

\begin{Lemma}
\label{lem:trace}
If $\chi\in L^\infty(D)$ has compact support in the open set $D\subset \mathbb{C}^n$, 
then there are positive constants $c_1$ and $c_2$, such that 
\begin{equation}
\label{eq:stretchedexpo}
d_m(T_{\chi,\varphi}) \leq c_1 \exp( -c_2 m^{1/n}) \quad (\forall m \in \mathbb{N})\,.
\end{equation}
In particular, $T_{\chi,\varphi}$ is trace class
with 
\begin{equation}
\label{eq:trace:1}
{\rm Tr}(T_{\chi,\varphi})= \int_DB_{\varphi}(z,z)\chi(z)\,dm_\varphi(z)\,,
\end{equation}
and 
\begin{equation}
\label{eq:trace:2}
 {\rm Tr}(T_{\chi,\varphi}^2)= \int_D\int_D\abs{B_{\varphi}(z,\zeta)}^2\chi(z)\chi(\zeta)
\, dm_\varphi(z)\,dm_\varphi(\zeta)\,.
\end{equation}
\end{Lemma}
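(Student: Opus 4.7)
The plan is to factor $T_{\chi,\varphi}=P_\varphi M_\chi J_\varphi$ and, using the submultiplicativity \eqref{eq:dmsubm} together with $\|P_\varphi\|\le 1$, reduce the Kolmogorov-number estimate to bounding $d_m(M_\chi J_\varphi:H^2_\varphi(D)\to L^2_\varphi(D))$. I will then construct, for each $N\in\mathbb{N}$, an explicit subspace $L_N\subset L^2_\varphi(D)$ of dimension $O(N^n)$ such that $\chi f$ can be approximated by an element of $L_N$ with error at most $C\rho^N\|f\|_{H^2_\varphi(D)}$ for some $\rho<1$. Choosing $N\sim m^{1/n}$ then delivers \eqref{eq:stretchedexpo}.

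\textbf{Building the approximants.} Let $K_0=\operatorname{supp}\chi$, a compact subset of $D$. I would pick $r>0$ so small that $\overline{P(z,r)}\subset D$ for every $z\in K_0$, cover $K_0$ by finitely many polydiscs $P(z_j,r/2)$ ($j=1,\dots,M$) centred at points of $K_0$, and fix a smooth partition of unity $\{\psi_j\}$ with $\operatorname{supp}\psi_j\subset P(z_j,r/2)$ and $\sum_j\psi_j\equiv 1$ near $K_0$. For $f\in H^2_\varphi(D)$, denote by $T^j_N f$ its Taylor polynomial at $z_j$ of degree less than $N$; this depends linearly on $f$. Subharmonicity of $|f|^2$ together with the boundedness of $\varphi$ yields the local sup-norm bound $\sup_{P(z_j,r)}|f|\le C\|f\|_{H^2_\varphi(D)}$, and combining this with the Cauchy estimates on $P(z_j,r)$ gives
\[ \sup_{P(z_j,r/2)}\bigl|f-T^j_N f\bigr|\le C'\rho^N\|f\|_{H^2_\varphi(D)} \]
for some $\rho<1$ independent of $f$ and $N$ (any polynomial-in-$N$ factor is absorbed into a slightly larger $\rho$). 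The map $f\mapsto \chi\sum_j\psi_j T^j_N f$ then takes values in
\[ L_N=\operatorname{span}\bigl\{\chi\psi_j (z-z_j)^\alpha : 1\le j\le M,\ |\alpha|<N\bigr\}\subset L^2_\varphi(D), \]
which has dimension at most $M\binom{N+n-1}{n}=O(N^n)$, and produces an approximation error in $L^2_\varphi(D)$ of order $\rho^N\|f\|_{H^2_\varphi(D)}$. Choosing $N\sim m^{1/n}$ establishes \eqref{eq:stretchedexpo}.

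\textbf{Trace class and trace formulas.} The stretched-exponential bound makes $\sum_m d_m(T_{\chi,\varphi})$ summable, and since Kolmogorov, Gelfand and approximation numbers coincide in the Hilbert-space setting (as noted at the end of Section~\ref{sec:1.3}), $T_{\chi,\varphi}$ is trace class. To derive \eqref{eq:trace:1}, I would pick an orthonormal basis $(e_m)$ of $H^2_\varphi(D)$, write
\[ \operatorname{Tr}(T_{\chi,\varphi})=\sum_m\langle T_{\chi,\varphi}e_m,e_m\rangle_{H^2_\varphi}=\sum_m\int_D\chi|e_m|^2\,dm_\varphi, \]
and apply Fubini --- justified by $\int_D|\chi|B_\varphi(z,z)\,dm_\varphi<\infty$, which holds because $B_\varphi$ is continuous on $D\times D$ and $\operatorname{supp}\chi$ is compact --- together with the reproducing-kernel identity $B_\varphi(z,z)=\sum_m|e_m(z)|^2$. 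For \eqref{eq:trace:2}, I would expand $\operatorname{Tr}(T^2_{\chi,\varphi})=\sum_{m,k}\langle T_{\chi,\varphi}e_m,e_k\rangle\langle T_{\chi,\varphi}e_k,e_m\rangle$, substitute the matrix coefficients $\langle T_{\chi,\varphi}e_m,e_k\rangle=\int_D\chi e_m\overline{e_k}\,dm_\varphi$, and use Fubini combined with $\sum_m e_m(z)\overline{e_m(\zeta)}=B_\varphi(z,\zeta)$ and $\overline{B_\varphi(z,\zeta)}=B_\varphi(\zeta,z)$ to recognise the double integral of $|B_\varphi(z,\zeta)|^2\chi(z)\chi(\zeta)$.

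\textbf{Main obstacle.} The real work sits in the Taylor-approximation step: the constants in the local sup-norm estimate and in the Cauchy remainder must be uniform over the finitely many centres $z_j$ and over all unit-norm $f\in H^2_\varphi(D)$, and the bookkeeping of the ratio between $r/2$ and $r$ is what pins down the constant $c_2$ in \eqref{eq:stretchedexpo}. Once this approximation lemma is in place, the trace and Hilbert--Schmidt formulas are a routine Fubini/kernel-expansion exercise.
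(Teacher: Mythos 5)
Your proof is correct, and for the key estimate \eqref{eq:stretchedexpo} it takes a genuinely more self-contained route than the paper. The paper instead chooses an intermediate domain $U$ with $\operatorname{supp}\chi\subset U\subset\subset D$, factors $T_{\chi,\varphi}=P_\varphi\widetilde{M}_\chi J_U J_{D,U}$ through the restriction embedding $J_{D,U}:H^2_\varphi(D)\to H^2_\varphi(U)$, and then simply \emph{cites} the stretched-exponential decay of $d_m(J_{D,U})$ from the literature (\cite[Theorem~4.7]{BJ08}), so that submultiplicativity finishes the job in three lines. What you do is essentially re-prove that cited input from scratch: the polydisc covering of $\operatorname{supp}\chi$, the partition of unity, the truncated Taylor expansions with Cauchy estimates, and the count $\dim L_N=O(N^n)$ leading to $N\sim m^{1/n}$ is precisely the mechanism behind the quoted embedding bound. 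Your version buys a fully explicit, reference-free argument (and in principle explicit constants $c_1,c_2$ in terms of $r$ and the covering), at the cost of length; the paper's version buys brevity by outsourcing the analytic content. One small point of hygiene in your construction: to get $\sup_{\overline{P(z_j,r)}}|f|\le C\|f\|_{H^2_\varphi(D)}$ from the sub-mean-value inequality you need polydiscs of radius strictly larger than $r$ to remain in $D$; this follows by compactness from your hypothesis $\overline{P(z,r)}\subset D$ for all $z\in\operatorname{supp}\chi$, but it should be said. Your treatment of the trace formulas (basis expansion, Fubini justified by continuity of $B_\varphi$ and compactness of $\operatorname{supp}\chi$, and the identities $\sum_m e_m(z)\overline{e_m(\zeta)}=B_\varphi(z,\zeta)$, $\overline{B_\varphi(z,\zeta)}=B_\varphi(\zeta,z)$) matches the paper's computation essentially verbatim.
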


\begin{proof}
Since $\chi$ has compact support we can choose a domain $U$ containing the support of 
$\chi$ such that the closure of $U$ is a compact subset of $D$. We can now write 
\begin{equation}
\label{eq:Tfac}
T_{\chi,\varphi}=P_\varphi M_\chi J_\varphi=P_\varphi \widetilde{M}_\chi J_{U}J_{D,U}\,, 
\end{equation}
where $J_{D,U}:H^2_\varphi(D)\to H^2_\varphi(U)$ and 
$J_{U}:H^2_\varphi(U)\to L^2_\varphi(U)$ denote the canonical embeddings and 
$\widetilde{M}_\chi$ denotes the operator of multiplication by $\chi$, 
albeit considered as an operator from 
$L^2_\varphi(U)$ to $L^2_\varphi(D)$. 
Note that $\widetilde{M}_\chi$ is well defined and bounded since the support of $\chi$ 
is contained in $U$.

Now, since $U$ is compactly contained in $D$ standard arguments (see, for example, 
\cite[Theorem~4.7]{BJ08}), show that the Kolmogorov numbers of $J_{D,U}$ enjoy a stretched 
exponential 
bound, that is, there are positive constants 
$\tilde{c}_1$ and $\tilde{c}_2$ such that 
\begin{equation}
\label{eq:stretchedexpo_tilde}
d_m(J_{D,U}) \leq \tilde{c}_1 \exp( -\tilde{c}_2 m^{1/n}) \quad (\forall m \in \mathbb{N})\,.
\end{equation}
Moreover, using the factorisation (\ref{eq:Tfac})
and the fact that  $P_\varphi \widetilde{M}_\chi J_{U}$ is bounded we have 
\[ d_m(T_{\chi,\varphi}) \leq 
\|P_\varphi \widetilde{M}_\chi J_{U}\| d_m(J_{D,U}) \quad (\forall m \in \mathbb{N})\,, \]
which, together with (\ref{eq:stretchedexpo_tilde}), yields (\ref{eq:stretchedexpo}). In particular, the 
operator $T_{\chi,\varphi}$ is trace class, since its Kolmogorov numbers, and hence its singular 
values are summable.

Given an orthonormal basis $(e_m)$ of $H^2_\varphi(D)$ we then have, using properties 
of the reproducing kernel stated earlier, 
\begin{align*}
{\rm Tr}(T_{\chi,\varphi}) & = \sum_m \int_D\int_D B_{\varphi}(z,\zeta)\chi(\zeta)e_m(\zeta)
\overline{e_m(z)}
\,dm_\varphi(\zeta)\,dm_\varphi(z) \\
& = \int_D\int_D B_{\varphi}(z,\zeta)\chi(\zeta)B_\varphi(\zeta,z) \,dm_\varphi(\zeta)\,dm_\varphi(z) \\
& = \int_D B_{\varphi}(\zeta,\zeta) \chi(\zeta)\,dm_\varphi(\zeta)\,,
\end{align*}
and (\ref{eq:trace:1}) is proven. For the proof of the second trace formula we observe that 
we can also write $T^2_{\chi,\varphi}$ as an integral operator 
\begin{equation*}
T^2_{\chi,\varphi}f(z)=\int_D K_{\chi,\varphi}(z,\zeta)f(\zeta)\,dm_\varphi(\zeta)\,,
\end{equation*}
with kernel 
\[ K_{\chi,\varphi}(z,\zeta)=
\int_D B_{\varphi}(z,\zeta') \chi(\zeta') B_{\varphi}(\zeta',\zeta) \chi(\zeta) 
\,dm_\varphi(\zeta')\,,  
\]
so using the same arguments as before, we have
\[{\rm Tr}(T^2_{\chi,\varphi}) =
\int_D\int_D B_{\varphi}(\zeta',\zeta)\chi(\zeta)B_{\varphi}(\zeta,\zeta')\chi(\zeta')
\,dm_\varphi(\zeta)\,dm_\varphi(\zeta') \]
and (\ref{eq:trace:2}) follows by using the symmetry of the reproducing kernel.
\end{proof}

Suppose now that the symbol $\chi$ is non negative with compact support in $D$. By the 
preceding two lemmas, the associated Toeplitz operator $T_{\chi,\varphi}$ is positive and 
compact. If $f$ is an eigenfunction of $T_{\chi,\varphi}$ with positive eigenvalue $\lambda$, then 
\[ 
\lambda \| f \|^2_{H^2_{\varphi}(D)} = 
(T_{\chi,\varphi}f,f)_{H^2_{\varphi(D)}}=
(M_{\chi} J_\varphi f, J_\varphi f)_{L^2_\varphi(D)}
=\int_D \chi \abs{f}^2\,dm_\varphi\,.
\]
Thus, if $\chi$ is the characteristic function of a compact subset $K$ of $D$, the eigenvalue 
$\lambda$ measures the concentration of the mass of the corresponding eigenfunction 
$f$ to $K$. \\
In order to investigate this further we shall write 
$(\lambda_m(T_{\chi,\varphi}))_{m\in \mathbb{N}}$ for the 
corresponding eigenvalue sequence arranged in 
non-increasing order, so that 
\[ \lambda_1(T_{\chi,\varphi}) \ge \lambda_2(T_{\chi,\varphi}) \ge 
\lambda_3(T_{\chi,\varphi}) \ge \cdots\,, \]
with each eigenvalue repeated according to its algebraic multiplicity. \\

In the following we shall be interested in the behaviour of the eigenvalue sequence of 
$T_{\chi,k\varphi}$ when $k$ tends to infinity. 
Of fundamental importance for this study is the following result, essentially due to 
Engli\v{s} (see \cite[Theorem~1]{En1}), 
which gives an asymptotic expansion for the Bergman kernels $B_{k\varphi}$ 
as $k$ tends to infinity, provided that $\varphi$ is smooth and strictly psh.

\begin{Theorem} 
\label{theor20}
Let $D$ be a bounded pseudoconvex domain in $\bbC^n$ and $\varphi$ be a 
strictly psh and ${C}^\infty$ function on $D$. Then the following holds. 
\begin{itemize}
\item[(i)]  We have 
\[ k^{-n} B_{k\varphi}(z,z)\,dm_{k\varphi}(z) 
\xrightarrow{k\to \infty} \frac{1}{(2\pi)^n n!} (dd^c \varphi)^n(z)\,. \]
pointwise in $D$, with the left hand side being locally uniformly 
bounded in $D$ for every $k$.
\item[(ii)] We have  
\[ k^{-n} | B_{k\varphi}(z,\zeta)|^2\,dm_{k\varphi}(z)\,dm_{k\varphi}(\zeta) 
\xrightarrow{k\to \infty}\frac{1}{(2\pi)^n n!} (dd^c \varphi)^n|_{z=\zeta} \]
weakly as positive measures on $D\times D$, that is, for every $g \in {C}_c(D\times D)$ 
\begin{multline*} 
k^{-n} \int_D \int_D g(z,\zeta) \abs{B_{k\varphi}(z,\zeta)}^2 
 \,dm_{k\varphi}(z)\,dm_{k\varphi}(\zeta) \\ 
 \xrightarrow{k\to \infty}
\frac{1}{(2\pi)^n n!} \int_D g(z,z)\,(dd^c \varphi)^n (z)\,.
\end{multline*}
\end{itemize}
\end{Theorem}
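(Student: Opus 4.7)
The plan is to adopt the classical extremal-function approach to semiclassical Bergman kernel asymptotics. For (i), the starting point is the well-known identity
\[ B_{k\varphi}(z_0,z_0) = \sup\set{|f(z_0)|^2 : f \in H^2_{k\varphi}(D),\, \norm{f}_{L^2_{k\varphi}(D)} \leq 1}, \]
so the task reduces to sharp matching two-sided bounds on this supremum as $k\to \infty$. The model to compare against is the weight $L(z) = \langle A(z-z_0), z-z_0\rangle$ on $\mathbb{C}^n$ with $A$ positive Hermitian, for which the weighted Bergman kernel on the diagonal is computable by Gaussian integrals and equals, up to the exponential factor, a constant times $\det(A)$. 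Choosing $A = (\partial\bar\partial\varphi)(z_0)$, the Levi form of $\varphi$ at $z_0$, exactly recovers the density of $(2\pi)^{-n}(n!)^{-1}(dd^c\varphi)^n$ with respect to $dm$.

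I would obtain the upper bound at a fixed $z_0\in D$ from the submean value inequality on a shrinking Euclidean ball $B(z_0,r_k)$ with $r_k$ of order $k^{-1/2+\delta}$: using the smooth Taylor expansion
\[ 2\varphi(z) - 2\varphi(z_0) = 2\mathrm{Re}\,Q(z) + L(z-z_0) + O(|z-z_0|^3), \]
where $Q$ is a holomorphic polynomial, I would absorb the pluriharmonic piece by replacing $f$ with $fe^{-kQ}$ and then compare with the Gaussian model. This yields a bound valid uniformly for $z_0$ in compact subsets of $D$, which simultaneously gives the locally uniform upper bound in (i).

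For the lower bound I would construct near-peak extremal functions via Hörmander's $L^2$ $\bar\partial$-theorem. Take a cutoff $\chi$ supported in $B(z_0,r_k)$ and equal to $1$ on a smaller ball, and consider the candidate $\chi e^{kQ}$. Its $\bar\partial$ is supported in an annulus on which $\varphi$ already exceeds its quadratic approximation, and Hörmander's estimate applied to the strictly psh weight $k\varphi$ on the pseudoconvex domain $D$ produces a holomorphic correction $u$ whose value at $z_0$ and whose global $L^2_{k\varphi}$-norm are exponentially negligible compared with the leading term. The modified function $\chi e^{kQ} - u$ is a trial function realising, asymptotically, the model value in the extremal identity, and this matches the upper bound.

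For (ii), the strategy is to reduce to (i) via off-diagonal decay. A standard Agmon-type argument, again based on Hörmander $\bar\partial$-estimates with a perturbed weight of the form $k\varphi(z) + \tau k|z-\zeta|^2$ (for $\tau$ small compared with the Levi form), yields
\[ \abs{B_{k\varphi}(z,\zeta)}^2 e^{-2k\varphi(z)-2k\varphi(\zeta)} \leq C k^{2n} e^{-ck|z-\zeta|^2} \]
on compact subsets, so the measure $k^{-n}|B_{k\varphi}(z,\zeta)|^2\,dm_{k\varphi}(z)\,dm_{k\varphi}(\zeta)$ concentrates within $O(k^{-1/2})$ of the diagonal. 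Combined with the reproducing identity $\int \abs{B_{k\varphi}(z,\zeta)}^2\,dm_{k\varphi}(\zeta) = B_{k\varphi}(z,z)$, this lets one replace $g(z,\zeta)$ by $g(z,z)$ in the test integral with $o(1)$ error, after which part (i) and dominated convergence, using the locally uniform bound already established, conclude the proof. The hardest step is the off-diagonal Agmon estimate: the pointwise diagonal asymptotic is relatively soft and follows from the extremal principle, whereas quantitative off-diagonal decay requires a carefully engineered weight in Hörmander's theorem that is positive enough to control the commutator between $\bar\partial$ and the Agmon potential while still being dominated by the strict plurisubharmonicity of $\varphi$.
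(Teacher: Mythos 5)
Your proposal is essentially sound, but note that the paper does not prove Theorem~\ref{theor20} at all: it cites Engli\v{s} for part (i), whose argument goes through Fefferman's asymptotic expansion of the (unweighted) Bergman kernel of a Forelli--Rudin (Hartogs) domain fibred over $D$, and for part (ii) it invokes Lindholm's Fock-space result, remarking that his $L^2$-techniques (in the spirit of Landau's concentration argument) adapt to bounded pseudoconvex domains. Your route -- the extremal characterisation $B_{k\varphi}(z_0,z_0)=\sup\{|f(z_0)|^2:\|f\|_{L^2_{k\varphi}}\le 1\}$, comparison with the Gaussian model built from the Levi form at $z_0$ (your constant check is right: the model gives $\frac{2^n}{\pi^n}\det(\varphi_{j\bar k})=\frac{1}{(2\pi)^n n!}$ times the density of $(dd^c\varphi)^n$), the sub-mean-value upper bound on balls of radius $k^{-1/2+\delta}$, and H\"ormander-corrected peak sections for the lower bound -- is precisely the self-contained $\bar\partial$-based alternative, and for part (i) it is genuinely different from (and lighter than) Engli\v{s}'s Fefferman-expansion proof; for part (ii) it is in substance the Lindholm adaptation the paper alludes to. What your approach buys is a unified, locally uniform treatment of both parts without importing the Forelli--Rudin machinery; what the paper's citation buys is brevity.

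One technical overstatement to repair: the claimed off-diagonal bound $|B_{k\varphi}(z,\zeta)|^2e^{-2k\varphi(z)-2k\varphi(\zeta)}\le Ck^{2n}e^{-ck|z-\zeta|^2}$ is stronger than what holds for a general strictly psh weight; Gaussian decay at that scale is special to the Bargmann--Fock model, and the correct general estimate on compact subsets (Delin, Lindholm) is of the form $Ck^{2n}e^{-c\sqrt{k}\,|z-\zeta|}$. This weaker decay still forces concentration of the measure $k^{-n}|B_{k\varphi}(z,\zeta)|^2\,dm_{k\varphi}(z)\,dm_{k\varphi}(\zeta)$ in an $o(1)$-neighbourhood of the diagonal, and together with the reproducing identity $\int_D|B_{k\varphi}(z,\zeta)|^2\,dm_{k\varphi}(\zeta)=B_{k\varphi}(z,z)$ and the locally uniform bound from (i) it completes your argument for (ii) exactly as you describe; so the gap is in the stated rate, not in the structure of the proof.
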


\begin{Remark}
Assertion (i) is due to Engli\v{s}. Its proof in \cite{En1} 
is based on Fefferman's asymptotic expansion of the Bergman kernel of a 
Forelli-Rudin domain over $D$.

Analogues of assertions (i) and (ii) can be found in the paper \cite{Li} of Lindholm 
(Theorems~10 and 11) for Toeplitz operators on Fock spaces over $\mathbb{C}^n$. 
The method of proving these results is inspired by an approach of Landau~\cite{La}, 
based on studying 
functions concentrated on compact sets, and using 
$L^2$-techniques to obtain precise size estimates of the Bergman kernel both on and off the 
diagonal. Lindholm's proof of the analogue of (ii) in \cite[Theorem 11]{Li} is easily adapted 
to the case of bounded pseudoconvex domains in $\mathbb{C}^n$.
\end{Remark}

Using the previous theorem we are now able to prove a crucial result concerning the 
asymptotics 
of the number of eigenvalues of $T_{\chi,k\varphi}$ greater than a fixed threshold as $k$ tends to 
infinity. 
The proposition and the main idea of its proof are inspired by an analogous result of 
Lindholm \cite[Theorem~13]{Li}. 

\begin{Proposition} \label{cor20} Let $D$ be a bounded pseudoconvex domain in $\bbC^n$ and $
\varphi$ be a strictly psh 
and ${C}^\infty$ function on $D$. Let $\chi\in L^\infty(D)$ be non-zero, non-negative
and have compact support in $D$. Then, for any $\gamma \in (0,1)$, we have   
\[ \lim_{k \to \infty} \sharp \all{m\in \mathbb{N}}{\lambda_m(T_{\chi,k\varphi}) >\gamma 
\|\chi\|_{\infty}}\cdot 
k^{-n} = \frac{1}{(2 \pi)^n n!} \int_D\frac{\chi}{\|\chi\|_{\infty}} \,(dd^c \varphi)^n\,.\]
\end{Proposition}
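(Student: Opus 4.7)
The plan is to convert eigenvalue-counting information into trace information by a Chebyshev-type sandwich, and then evaluate the two traces using the Bergman-kernel asymptotics of Theorem~\ref{theor20}. Since $T_{c\chi,k\varphi}=c\,T_{\chi,k\varphi}$ for any $c>0$, I rescale at the outset and assume $\|\chi\|_\infty=1$; by Lemma~\ref{lem:positivity} the eigenvalues $\lambda_m^{(k)}:=\lambda_m(T_{\chi,k\varphi})$ then all lie in $[0,1]$, so writing $N_k(\gamma):=\sharp\{m:\lambda_m^{(k)}>\gamma\}$ the target becomes $k^{-n}N_k(\gamma)\to\tfrac{1}{(2\pi)^n n!}\int_D\chi\,(dd^c\varphi)^n$.

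The first input is the two trace asymptotics. Combining the trace formula of Lemma~\ref{lem:trace} with Theorem~\ref{theor20}(i) and dominated convergence (legitimate since $\chi$ has compact support in $D$ and $k^{-n}B_{k\varphi}(z,z)\,dm_{k\varphi}(z)$ is locally uniformly bounded) gives
\[
k^{-n}\operatorname{Tr}(T_{\chi,k\varphi})\;\longrightarrow\; I_1:=\tfrac{1}{(2\pi)^n n!}\int_D \chi\,(dd^c\varphi)^n,
\]
and applying Theorem~\ref{theor20}(ii) to approximations of $(z,\zeta)\mapsto\chi(z)\chi(\zeta)$ by elements of $C_c(D\times D)$ yields
\[
k^{-n}\operatorname{Tr}(T_{\chi,k\varphi}^2)\;\longrightarrow\; I_2:=\tfrac{1}{(2\pi)^n n!}\int_D \chi^2\,(dd^c\varphi)^n.
\]

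The second input is the elementary pointwise inequality $|\mathbf{1}_{(\gamma,1]}(\lambda)-\lambda|\le\lambda(1-\lambda)/(\gamma(1-\gamma))$ on $[0,1]$, which follows by splitting at $\gamma$ and using $1-\lambda\ge 1-\gamma$ for $\lambda\le\gamma$ or $\lambda\ge\gamma$ for $\lambda>\gamma$. Summing this over the spectral decomposition of the compact self-adjoint operator $T_{\chi,k\varphi}$ gives the Chebyshev sandwich
\[
\bigl|N_k(\gamma)-\operatorname{Tr}(T_{\chi,k\varphi})\bigr|\;\le\;\tfrac{1}{\gamma(1-\gamma)}\bigl(\operatorname{Tr}(T_{\chi,k\varphi})-\operatorname{Tr}(T_{\chi,k\varphi}^2)\bigr).
\]
When $\chi$ is (a multiple of) an indicator $\mathbf{1}_K$ one has $\chi^2=\chi$, so $I_1-I_2=0$ and the result follows at once: $k^{-n}N_k(\gamma)\to I_1$, which matches the stated formula since $\chi/\|\chi\|_\infty=\mathbf{1}_K$ on the normalised scale.

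For a general non-negative $\chi$ the Chebyshev residual $I_1-I_2=\tfrac{1}{(2\pi)^n n!}\int_D\chi(1-\chi)(dd^c\varphi)^n$ is strictly positive, so I would reduce to the indicator case by operator bracketing. For $s\in(0,1)$ and $E_s:=\{\chi>s\}$ the pointwise estimates $s\mathbf{1}_{E_s}\le\chi\le s+\mathbf{1}_{E_s}$ lift via the self-adjoint monotonicity $\chi_1\le\chi_2\Rightarrow T_{\chi_1,k\varphi}\le T_{\chi_2,k\varphi}$ and the min--max principle to $s\lambda_m(T_{\mathbf{1}_{E_s},k\varphi})\le\lambda_m^{(k)}\le s+\lambda_m(T_{\mathbf{1}_{E_s},k\varphi})$; taking $s$ slightly above $\gamma$ for a lower bound and slightly below $\gamma$ for an upper bound on $N_k(\gamma)$, applying the indicator case to $\mathbf{1}_{E_s}$, and letting $s\to\gamma$ through $(dd^c\varphi)^n$-regular values pinches $k^{-n}N_k(\gamma)$. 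The principal technical obstacle is the second trace asymptotic, since Theorem~\ref{theor20}(ii) asserts weak convergence only against $C_c(D\times D)$ while $\chi\otimes\chi$ is merely bounded measurable; I would upgrade the convergence by regularising $\chi$ in $L^1$ and controlling the off-diagonal mass of $|B_{k\varphi}|^2$ via the reproducing-kernel inequality $|B_{k\varphi}(z,\zeta)|^2\le B_{k\varphi}(z,z)B_{k\varphi}(\zeta,\zeta)$ combined with part (i). A secondary subtlety is arranging the bracketing so that the pinch as $s\to\gamma$ reproduces exactly $\int_D\chi\,(dd^c\varphi)^n$ rather than $\int_{\{\chi>\gamma\}}(dd^c\varphi)^n$; closing this gap likely requires a layer-cake refinement $\chi=\int_0^1\mathbf{1}_{\{\chi>t\}}\,dt$ and a slice-wise application of the indicator case.
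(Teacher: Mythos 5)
Your treatment of the indicator case is complete, correct, and in essence the paper's own argument in a tidier package: the pointwise bound $|\mathbf{1}_{(\gamma,1]}(\lambda)-\lambda|\le \lambda(1-\lambda)/(\gamma(1-\gamma))$ summed over the spectrum encodes exactly what the paper extracts from its inequalities (\ref{eq2}) and (\ref{eq3}), and the two trace asymptotics come from Lemma~\ref{lem:trace} together with Theorem~\ref{theor20}, as in the paper. The obstacle you flag about testing Theorem~\ref{theor20}(ii) against the merely bounded function $\chi\otimes\chi$ is real but surmountable along the lines you sketch (regularise $\chi$ in $L^1$ on a fixed compact neighbourhood of its support and use $\int_D|B_{k\varphi}(z,\zeta)|^2\,dm_{k\varphi}(\zeta)=B_{k\varphi}(z,z)$ together with the local uniform bound in part (i)); note that the paper applies Theorem~\ref{theor20}(ii) to $\chi\otimes\chi$ without comment, so on this point you are more careful than the source.

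The genuine gap is the general, non-indicator case, and the ``layer-cake refinement'' you hope will close it cannot exist, because in that generality the stated limit is not the right one. Your value $I_2=\frac{1}{(2\pi)^n n!}\int_D\chi^2\,(dd^c\varphi)^n$ for $\lim_k k^{-n}\operatorname{Tr}(T^2_{\chi,k\varphi})$ is the correct consequence of Theorem~\ref{theor20}(ii); the paper's (\ref{eq:traceT2}) asserts instead $\frac{1}{(2\pi)^n n!}\int_D\chi\,(dd^c\varphi)^n$, and the two agree precisely when $\chi/\|\chi\|_\infty$ is (almost everywhere) an indicator, which is what makes the trace-comparison close. For genuinely intermediate-valued $\chi$ the conclusion of the Proposition is incompatible with your $I_2$: since $\operatorname{Tr}(T^2_{\chi,k\varphi})\ge \gamma^2\|\chi\|_\infty^2\,\sharp\{m:\lambda_m(T_{\chi,k\varphi})>\gamma\|\chi\|_\infty\}$, the claimed $\gamma$-independent limit would, on letting $\gamma\to1$, force $\int_D\chi^2\,(dd^c\varphi)^n\ge\|\chi\|_\infty\int_D\chi\,(dd^c\varphi)^n$, which fails unless $\chi\in\{0,\|\chi\|_\infty\}$ a.e.\ with respect to $(dd^c\varphi)^n$ (take, e.g., $\chi=\mathbf{1}_{K_1}+\tfrac12\mathbf{1}_{K_2}$ with disjoint compacta of positive measure; the true counting asymptotics then depends on $\gamma$ through the level set $\{\chi>\gamma\}$, exactly as your bracketing argument indicates). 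So the ``secondary subtlety'' you mention is an obstruction, not a technicality: your proof, like the paper's, is valid only for symbols proportional to indicator functions --- which happens to be the only case used later, via Lemma~\ref{lem:TisJstarJ}, where your argument is complete.
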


\begin{proof}
We start by noting that it suffices to show the result under the additional assumption 
that $\|\chi\|_\infty=1$. The general case then follows by observing that 
$\lambda_m(T_{c\chi,\varphi}) = c\lambda_m(T_{\chi,\varphi})$ for any $c\geq 0$ and any 
$m\in \mathbb{N}$.  

Suppose now that $\|\chi\|_\infty=1$. Fix $\gamma\in (0,1)$. We need to show that
\begin{equation}
\label{eq:conclimit}
\lim_{k \to \infty} \sharp \all{m\in \mathbb{N}}{\lambda_m(T_{\chi,k\varphi}) >\gamma}\cdot 
k^{-n} = \frac{1}{(2 \pi)^n n!} \int_D\chi \,(dd^c \varphi)^n\,.
\end{equation}
This will follow from the remarkable fact that both ${\rm Tr}(T_{\chi,k\varphi})$ and  
${\rm Tr}(T^2_{\chi,k\varphi})$ have the same asymptotics as 
$k$ tends to infinity. More precisely, 
by combining Theorem~\ref{theor20} and Lemma~\ref{lem:trace}, we have 
\begin{align}
\label{eq:traceT}
\lim_{k\to \infty} k^{-n}{\rm Tr}(T_{\chi,k\varphi}) & = 
\frac{1}{(2 \pi)^n n!} \int_D\chi \,(dd^c \varphi)^n\,,\\
\label{eq:traceT2}
\lim_{k\to \infty} k^{-n}{\rm Tr}(T^2_{\chi,k\varphi}) & = 
\frac{1}{(2 \pi)^n n!} \int_D\chi \,(dd^c \varphi)^n\,.
\end{align}
In order to simplify notation we shall use the shorthand 
\[ \lambda_m(k)=\lambda_m(T_{\chi,k\varphi})\] 
so that 
${\rm Tr}(T_{\chi,k\varphi})=\sum_m\lambda_m(k)$ and 
${\rm Tr}(T^2_{\chi,k\varphi})=\sum_m\lambda_m^2(k)$. 
Before proceeding we note that by Lemma~\ref{lem:positivity} we have 
$\lambda_m(k)\leq \|\chi\|_\infty=1$ for all $m$ and $k$. Moreover we have 
\[ \lambda_1(k)=\sup\all{(T_{\chi,k\varphi}f,f)}{\|f\|_{H^2_{k\varphi(D)}}=1}\]
\[\geq  \frac{\int_D\chi\cdot \chi_D\,dm_{k\varphi}}{\int_D\chi_D\,dm_{k\varphi }}= 
\frac{\int_D\chi \,dm_{k\varphi}}{m_{k\varphi }(D)}>0\,.\]
Thus, for all $k$ we have 
\[ 0< \sum_m\lambda^2_m(k)\leq \sum_m\lambda_m(k)\,. \]
Combining the above with the equality of the limits (\ref{eq:traceT}) and (\ref{eq:traceT2}) 
we see that for any $\delta >0$ there exists $k_\delta \in \bbN$ such that for any 
$k\ge k_\delta$ we have
\begin{equation} 
\label{eq1}
1-\delta \le \frac{\sum_m\lambda_m^2(k)}{\sum_m \lambda_m(k)}  \le 1\,.
\end{equation}
We shall now establish the following two bounds: 
for every $k\geq k_\delta$ 
\begin{equation} \label{eq2}
\sharp \{m : \lambda_m(k) > \gamma\} \ge \left (1-\frac{\delta}{1-\gamma} \right ) \sum_{m} 
\lambda_m(k)\,,\end{equation}
and for every $k\geq k_\delta$ and every $\gamma'\in (\gamma,1)$ 
\begin{equation} \label{eq3}
\sharp \{m : \lambda_m(k) > \gamma\}
\le \left (\frac{1}{\gamma'} + \frac{\delta}{\gamma(1-\gamma')} \right ) 
\sum_m \lambda_m(k)\,.
\end{equation}
In order to see this fix $k\geq k_\delta$ and define, for every $\beta\in (0,1)$, 
\[ S_{\beta} = \frac{\sum_{m:\lambda_m \le \beta} \lambda_m(k)}{\sum_{m} 
\lambda_m(k)}\,. \]
It now follows from (\ref{eq1}) that
\begin{multline*}
(1-\delta) \sum_m \lambda_m(k) \le \sum_{m:\lambda_m >\gamma} \lambda_m^2(k) + 
\sum_{m:\lambda_m \le \gamma} \lambda_m^2(k) \\
\le \sum_{m:\lambda_m >\gamma} \lambda_m(k) + 
\gamma\cdot\sum_{m:\lambda_m \le \gamma} \lambda_m(k) =
(1-S_{\gamma}) \sum_m \lambda_m(k) + \gamma S_{\gamma} \sum_m \lambda_m(k)\,,
\end{multline*}
so $S_{\gamma} \le \frac{\delta}{1-\gamma}$. Hence
\begin{equation*}
\sharp \{m : \lambda_m(k) > \gamma\} \ge \sum_{m:\lambda_m >\gamma} \lambda_m(k) \ge 
(1-\frac{\delta}{1-\gamma}) \sum_{m} \lambda_m(k)\,,
\end{equation*}
and (\ref{eq2}) is proven. For (\ref{eq3}) we note that for $\gamma < \gamma' <1$ we have  
\begin{align*}
\sharp \{m : \lambda_m(k) > \gamma\}
& = \sharp \{m : \lambda_m(k) > \gamma'\} + 
\sharp \{m : \gamma' \ge \lambda_m(k) > \gamma\} \\
& \le \frac{1}{\gamma'} \sum_{m:\lambda_m >\gamma'} \lambda_m(k)+ \frac{1}{\gamma} 
\sum_{m:\gamma' \ge \lambda_m >\gamma} \lambda_m(k)\\
& \le \frac{1}{\gamma'} \sum_{m} \lambda_m(k)+ \frac{1}{\gamma} S_{\gamma'}\sum_{m} \lambda_m(k)
\end{align*}
and (\ref{eq3}) follows. 

Now, combining (\ref{eq:traceT}) and (\ref{eq2}) gives, for any $\delta>0$
\[ \liminf_{k \to \infty} \sharp \{m : \lambda_m(k) > \gamma\}\cdot k^{-n} \ge
\left (1-\frac{\delta}{1-\gamma} \right )
 \frac{1}{(2\pi)^n n!} \int_D \chi  (dd^c \varphi)^n\,,\]
 which, since $\delta>0$ was arbitrary, yields 
 \begin{equation}
 \label{eq:liminf}
 \liminf_{k \to \infty} \sharp \{m : \lambda_m(k) > \gamma\}\cdot k^{-n} \ge
 \frac{1}{(2\pi)^n n!} \int_D \chi  (dd^c \varphi)^n\,.
 \end{equation}
Similarly, combining (\ref{eq:traceT}) and (\ref{eq3}) gives, for any $\delta>0$ and any 
$\gamma'\in (\gamma,1)$ 
\[ \limsup_{k \to \infty} \sharp \{m : \lambda_m(k) > \gamma\}\cdot k^{-n} \le
\left (\frac{1}{\gamma'}+\frac{\delta}{\gamma(1-\gamma')} \right )
 \frac{1}{(2\pi)^n n!} \int_D \chi  (dd^c \varphi)^n\,;\]
but since $\delta>0$ and $\gamma<\gamma'<1$ were arbitrary, the above implies 
\begin{equation}
\label{eq:limsup}
\limsup_{k \to \infty} \sharp \{m : \lambda_m(k) > \gamma\}\cdot k^{-n} \le
 \frac{1}{(2\pi)^n n!} \int_D \chi  (dd^c \varphi)^n\,.
 \end{equation}
 Combining (\ref{eq:liminf}) and (\ref{eq:limsup}) the limit (\ref{eq:conclimit}) follows.
 \end{proof}

\subsection{Lower bounds for Kolmogorov widths}
\label{sec:2.2}
We shall now use the results of the previous subsection to study the asymptotics of the 
Kolmogorov numbers of certain embedding operators. To be precise, let $D$ be a 
domain in $\mathbb{C}^n$ and let $K$ be a compact subset of $D$ with positive 
$2n$-dimensional Lebesgue measure $m$. 
For $\varphi\in L^\infty(D)$ we write $L^2_\varphi(K)$ 
for the weighted $L^2$-space of Lebesgue measurable functions on $K$ equipped with the 
norm
\[ \|f\|^2_{L^2_\varphi(K)}=\int_K\abs{f}^2e^{-2\varphi}\,dm=\int_K\abs{f}^2\,dm_\varphi\,.\]
Clearly, the natural embedding 
\[ J_{\varphi, K,D}:H^2_\varphi(D)\hookrightarrow L^2_\varphi(K)\] 
given by restricting functions in $H^2_\varphi(D)$ to $K$ is continuous. Moreover, this 
embedding is 
closely related to a  
Toeplitz operator on $H^2_\varphi(D)$, provided that its symbol is the indicator function 
of $K$, as the following lemma shows. 

\begin{Lemma}
\label{lem:TisJstarJ}
Let $D\subset \mathbb{C}^n$ be a domain, $K$ a compact subset of $D$ 
with positive Lebesgue measure and 
$\varphi\in L^\infty(D)$. If $\chi$ is the indicator function of $K$, then
\[ T_{\chi,\varphi}= J^*_{\varphi, K,D}J_{\varphi,K,D}\,. \]
In particular, we have 
\[ \lambda_m(T_{\chi,\varphi})=d_m(J_{\varphi, K,D})^2 \quad (\forall m \in \mathbb{N})\,.\]  
\end{Lemma}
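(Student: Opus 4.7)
The plan is to decompose the multiplication operator $M_\chi$ through $L^2_\varphi(K)$ and then identify $T_{\chi,\varphi}$ with the self-adjoint operator $J^*_{\varphi,K,D}J_{\varphi,K,D}$. Concretely, introduce the restriction map $R_K : L^2_\varphi(D) \to L^2_\varphi(K)$, $f \mapsto f|_K$, and the extension-by-zero map $E_K : L^2_\varphi(K) \to L^2_\varphi(D)$. A direct Fubini-style computation
\[
\langle R_K f, g\rangle_{L^2_\varphi(K)} = \int_K f\,\bar g\, dm_\varphi = \int_D f\,\overline{E_K g}\, dm_\varphi = \langle f, E_K g\rangle_{L^2_\varphi(D)}
\]
shows $E_K = R_K^*$, and since $\chi$ is the indicator of $K$ we have the factorisation $M_\chi = E_K R_K$.

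Next I would note that the restriction $J_{\varphi,K,D}$ factors as $R_K J_\varphi$, where $J_\varphi : H^2_\varphi(D) \hookrightarrow L^2_\varphi(D)$ is the natural embedding whose adjoint, as recalled in the paper, is the Bergman projection $P_\varphi$. Taking adjoints gives $J_{\varphi,K,D}^* = J_\varphi^* R_K^* = P_\varphi E_K$. Combining the factorisations,
\[
J_{\varphi,K,D}^* J_{\varphi,K,D} = P_\varphi E_K R_K J_\varphi = P_\varphi M_\chi J_\varphi = T_{\chi,\varphi},
\]
which is the first assertion.

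For the eigenvalue identity, I would invoke the fact recalled at the end of Section~1.3 that for bounded operators between Hilbert spaces the Kolmogorov numbers coincide with the approximation numbers, and hence with the singular values $s_m(J_{\varphi,K,D})$. Since $J_{\varphi,K,D}$ is compact (by Lemma~\ref{lem:trace} applied to $T_{\chi,\varphi}$, or directly because the Bergman embedding over the relatively compact $K$ is compact), its singular values are precisely the square roots of the eigenvalues of $J^*_{\varphi,K,D}J_{\varphi,K,D} = T_{\chi,\varphi}$, arranged in non-increasing order. Therefore
\[
d_m(J_{\varphi,K,D})^2 = s_m(J_{\varphi,K,D})^2 = \lambda_m(T_{\chi,\varphi}) \quad (\forall m \in \mathbb{N}).
\]

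There is no real obstacle here: the argument is essentially bookkeeping about adjoints. The only point requiring a modicum of care is checking that $E_K$ is genuinely the adjoint of $R_K$ and that the factorisation $M_\chi = E_K R_K$ is valid as an identity of bounded operators on $L^2_\varphi(D)$, both of which are routine given that $K$ has positive Lebesgue measure and $\varphi \in L^\infty(D)$.
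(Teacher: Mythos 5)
Your proof is correct and follows essentially the same route as the paper: the paper verifies $T_{\chi,\varphi}=J^*_{\varphi,K,D}J_{\varphi,K,D}$ by computing the sesquilinear forms $(T_{\chi,\varphi}f,g)$ and $(J^*_{\varphi,K,D}J_{\varphi,K,D}f,g)$ directly, which is just your factorisation $M_\chi=E_KR_K$, $J_{\varphi,K,D}=R_KJ_\varphi$ written in weak form, and the second assertion is deduced exactly as you do, from the definition of singular values and their coincidence with Kolmogorov numbers on Hilbert spaces.
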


\begin{proof}
Let $f,g\in H^2_\varphi(D)$. Then 
\begin{multline*}
(T_{\chi,\varphi}f,g)_{H^2_\varphi(D)}=(P_\varphi M_\chi J_\varphi f,g)_{H^2_\varphi(D)} \\
= (M_\chi J_\varphi f, J_\varphi g)_{L^2_\varphi(D)} 
= \int_D\chi f \overline{g} \,dm_\varphi = 
\int_K f\overline{g}\,dm_\varphi 
\end{multline*}
and 
\[ (J^*_{\varphi,K,D}J_{\varphi,K,D}f,g)_{H^2_\varphi(D)}=
(J_{\varphi,K,D} f,J_{\varphi,K,D}g)_{L^2_\varphi(K)} =\int_K f\overline{g}\,dm_\varphi \,,\]
so $T_{\chi,\varphi}=J^*_{\varphi,K,D}J_{\varphi,K,D}$ as claimed.

Recalling that, by definition, 
the $m$-th singular value of the 
compact operator $J_{\varphi,K,D}$ is equal to  
$\lambda_m(J^*_{\varphi, K,D}J_{\varphi,K,D})^{1/2}$,   
the remaining assertion now follows from the fact that for compact operators on a Hilbert space
the singular values and Kolmogorov numbers coincide 
(see, for example, \cite[Theorem~2.11.9]{pietsch87}).
\end{proof}

In order to apply the results of the previous subsection we also
require the following simple relationship between the Kolmogorov 
numbers of $J_{k\varphi, K,D}$ and $J_{0,K,D}$, that is, the natural embedding of the standard, 
unweighted Bergman space $H^2(D)$ in the unweighted Lebesgue space $L^2(K)$. 

\begin{Lemma}
\label{lem:dmkphi}
Let $D\subset \mathbb{C}^n$ be a domain, $K$ a compact subset of $D$ 
with $m(K)>0$ and $\varphi\in L^\infty(D)$. If $\varphi\leq 0$ on $D$ and 
$\varphi \geq -1$ on $K$, then
\[ d_m(J_{k\varphi, K,D})\leq e^k d_m(J_{0, K,D}) 
\quad(\forall k,m\in \mathbb{N})\,. \]
\end{Lemma}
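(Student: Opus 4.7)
The plan is to exploit the fact that the spaces $H^2_{k\varphi}(D)$ and $H^2(D)$ coincide as sets of functions (they differ only in norm since $\varphi$ is bounded), and similarly for $L^2_{k\varphi}(K)$ and $L^2(K)$. Under the sign conditions $\varphi\leq 0$ on $D$ and $\varphi\geq -1$ on $K$, the identity maps between these pairs of spaces have explicitly controlled norms, and a factorisation together with the submultiplicativity property~(\ref{eq:dmsubm}) of Kolmogorov numbers yields the bound.

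More precisely, consider the identity maps
\[ I_D:H^2_{k\varphi}(D)\to H^2(D),\qquad I_K:L^2(K)\to L^2_{k\varphi}(K).\]
For $I_D$, since $\varphi\leq 0$ on $D$ we have $e^{-2k\varphi}\geq 1$, hence
\[ \|I_D f\|_{H^2(D)}^2=\int_D|f|^2\,dm\leq \int_D|f|^2 e^{-2k\varphi}\,dm = \|f\|_{H^2_{k\varphi}(D)}^2,\]
so $\|I_D\|\leq 1$. For $I_K$, since $\varphi\geq -1$ on $K$ we have $e^{-2k\varphi}\leq e^{2k}$ on $K$, hence
\[ \|I_K f\|_{L^2_{k\varphi}(K)}^2=\int_K|f|^2 e^{-2k\varphi}\,dm \leq e^{2k}\int_K|f|^2\,dm = e^{2k}\|f\|_{L^2(K)}^2,\]
so $\|I_K\|\leq e^k$.

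The key observation is then the factorisation
\[ J_{k\varphi,K,D} = I_K\circ J_{0,K,D}\circ I_D,\]
which holds pointwise since all three maps act as restriction or identity on the underlying function. Applying the submultiplicativity of Kolmogorov numbers (\ref{eq:dmsubm}) gives
\[ d_m(J_{k\varphi,K,D})\leq \|I_K\|\cdot d_m(J_{0,K,D})\cdot \|I_D\|\leq e^k\, d_m(J_{0,K,D}),\]
as required. There is no real obstacle here; the statement is essentially a bookkeeping result whose only content is the sharp constant $e^k$, which comes from the assumed normalisation $-1\leq \varphi\leq 0$ on $K$ and $\varphi\leq 0$ on $D$.
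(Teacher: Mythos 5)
Your proof is correct and is essentially identical to the paper's: the same factorisation of $J_{k\varphi,K,D}$ through the unweighted embedding via the two identity maps, the same norm estimates $\|I_D\|\leq 1$ and $\|I_K\|\leq e^k$ from the sign conditions on $\varphi$, and the same appeal to submultiplicativity of the Kolmogorov numbers.
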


\begin{proof}
Let $I_{k,K}:L^2_0(K)\to L^2_{k\varphi}(K)$ and 
$I_{k,D}:H^2_0(D)\to H^2_{k\varphi}(D)$ denote the respective identities. Then
\[ J_{k\varphi,D,K}=I_{k,K}J_{0,K,D}I_{k,D}^{-1}\,,\]
and so 
\[ d_m(J_{k\varphi,D,K})\leq \|I_{k,K}\|d_m(J_{0, K,D})\|I_{k,D}^{-1}\|\,.\]
Note now that 
$\|I_{k,K}\|\leq e^k$, since, using $\varphi \geq -1$ on $K$, we have  
\[ \|I_{k,K}f\|^2_{L^2_{k\varphi}(K)}=\int_K\abs{f}^2e^{-2k\varphi}\,dm \leq 
e^{2k}\int_K \abs{f}^2\,dm=
e^{2k} \|f\|^2_{L^2_0(K)}\,. \]
Furthermore, $\|I^{-1}_{k,D}\|\leq 1$, since, 
using $\varphi \leq 0$ on $D$, we have  
\[ \|I^{-1}_{k,D}f\|^2_{H^2_{0}(D)}=\int_D\abs{f}^2\,dm \leq 
\int_D\abs{f}^2e^{-2k\varphi}\,dm 
=\|f\|^2_{H^2_{k\varphi}(D)}\,, \]
and the assertion follows. 
\end{proof}

Combining the previous two lemmas with Proposition~\ref{cor20} we obtain the following result. 

\begin{Proposition}
\label{prop:dmlowersmoothphi}
Let $D$ be a bounded pseudoconvex domain in $\mathbb{C}^n$ and 
$K$ a compact subset of $D$ 
with $m(K)>0$. If $\varphi$ is strictly psh 
and $C^\infty$ on $D$ with 
$\varphi \leq 0$ on $D$ and $\varphi \geq -1$ on $K$, then  
\[ \limsup_{m\to \infty} \frac{-\log d_m(J_{0,K,D})}{m^{1/n}} 
\leq 2\pi \left ( \frac{n!}{\int_K\,(dd^c\varphi)^n}\right )^{1/n}\,.\]
\end{Proposition}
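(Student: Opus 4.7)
The plan is to combine the two preceding lemmas with the spectral counting result of Proposition~\ref{cor20} by coupling the weight parameter $k$ to the width index $m$. Let $\chi$ denote the indicator function of $K$; since $K$ has positive Lebesgue measure, $\chi$ is non-zero with $\|\chi\|_\infty=1$ and compact support in $D$, so all hypotheses of Proposition~\ref{cor20} are met.

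First I would record the pointwise bound that falls out of Lemmas~\ref{lem:TisJstarJ} and~\ref{lem:dmkphi}: for every $k,m \in \mathbb{N}$,
\[ d_m(J_{0,K,D}) \;\geq\; e^{-k}\, d_m(J_{k\varphi,K,D}) \;=\; e^{-k}\sqrt{\lambda_m(T_{\chi,k\varphi})}. \]
This is the only place where the sign/normalisation hypotheses $\varphi \leq 0$ on $D$ and $\varphi \geq -1$ on $K$ enter; together they control the two weighted-to-unweighted identities $\|I_{k,K}\|\leq e^k$ and $\|I_{k,D}^{-1}\|\leq 1$ appearing in Lemma~\ref{lem:dmkphi}.

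Next, fix $\gamma \in (0,1)$ and write $N(k) = \sharp\{m : \lambda_m(T_{\chi,k\varphi}) > \gamma\}$. Proposition~\ref{cor20} applied with this $\chi$ yields
\[ \lim_{k\to\infty} \frac{N(k)}{k^n} \;=\; A, \qquad A \;:=\; \frac{1}{(2\pi)^n n!}\int_K (dd^c\varphi)^n. \]
Given any $A' \in (0,A)$, pick $k_0$ so that $N(k) > A'k^n$ for all $k\ge k_0$, and for each large $m$ define $k(m) = \lceil (m/A')^{1/n}\rceil$. Then $k(m)\ge k_0$ and $m \le A' k(m)^n < N(k(m))$, so the $m$-th eigenvalue of $T_{\chi,k(m)\varphi}$ exceeds $\gamma$. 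Feeding this into the pointwise bound gives
\[ -\log d_m(J_{0,K,D}) \;\leq\; k(m) - \tfrac{1}{2}\log\gamma. \]
Dividing by $m^{1/n}$ and letting $m\to\infty$ produces $\limsup_m \bigl(-\log d_m(J_{0,K,D})\bigr)/m^{1/n} \leq (A')^{-1/n}$; sending $A'\nearrow A$ yields the claimed bound $2\pi\bigl(n!/\int_K(dd^c\varphi)^n\bigr)^{1/n}$.

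The real content of the argument is already packaged in Proposition~\ref{cor20}, so I do not expect any genuine obstacle here beyond bookkeeping. The one step that merits care is the coupling $k\mapsto k(m)$: one must check that it is integer-valued and that $k(m)/m^{1/n}\to (A')^{-1/n}$, while simultaneously $m$ stays below $N(k(m))$ even as both diverge. Everything else is a direct chaining of the lemmas.
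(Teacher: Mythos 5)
Your proposal is correct and follows essentially the same route as the paper: both chain Lemma~\ref{lem:TisJstarJ} and Lemma~\ref{lem:dmkphi} into the bound $d_m(J_{0,K,D})\geq e^{-k}\sqrt{\lambda_m(T_{\chi,k\varphi})}$ and then invert the eigenvalue-counting asymptotics of Proposition~\ref{cor20} by coupling $k$ to $m$. The only difference is cosmetic bookkeeping — you set $k(m)=\lceil (m/A')^{1/n}\rceil$ directly, while the paper picks the largest $k$ with $\lfloor k^nC'\rfloor\leq m$ — and your version of the inversion is, if anything, slightly cleaner.
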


\begin{proof}
We start by observing that $\int_K\,(dd^c\varphi)^n>0$. 
Let now $\chi$ be the indicator function of $K$ and fix $\gamma\in (0,1)$. 
Using Lemma~\ref{lem:TisJstarJ} and Lemma~\ref{lem:dmkphi} it follows that 
\[ e^{2k}d_m(J_{0,K,D})^2 \geq  d_m(J_{k\varphi,K,D})^2=\lambda_m(T_{\chi,k\varphi})
\quad (\forall k,m\in \mathbb{N})\,,
\]
hence 
\[ \sharp \all{m\in \mathbb{N}}{d_m(J_{0,K,D})>\gamma e^{-k}}
\geq \sharp \all{m\in \mathbb{N}}{\lambda_m(T_{\chi,k\varphi})>\gamma^2} \,.\]  
Applying Proposition~\ref{cor20} to the above we see that 
\begin{equation}
\label{eq:liminf0}
\liminf_{k\to \infty} \sharp \all{m\in \mathbb{N}}{d_m(J_{0,K,D})>\gamma e^{-k}}\cdot k^{-n} \geq C\,,
\end{equation}
where
\[ C= \frac{1}{(2\pi)^n n!} \int_K\,(dd^c\varphi)^n\,.\]
Write $d_m$ for $d_m(J_{0,K,D})$ and fix $C'\in (0,C)$. 
Using (\ref{eq:liminf0}) it follows that there is $k'\in \mathbb{N}$ such that 
\[ \sharp \all{m\in \mathbb{N}}{d_m>\gamma e^{-k}}\geq C'k^n 
\quad (\forall k\geq k')\,, \]
so 
\begin{equation}
\label{eq:dfloor}
 d_{\floor{k^nC'}}\geq \gamma e^{-k} \quad (\forall k\geq k')\,,
 \end{equation}
where $\floor{\cdot}$ denotes the floor function. 
After possibly enlarging $k'$ we shall 
assume that $(k')^n C'\geq 1$. Now, for any $m\geq \floor{(k')^n C'}$ choose 
$k_m \ge k'$ to be the largest $k\in \mathbb{N}$ such that 
\[ \floor{k^nC'}\leq m < \floor{(k+1)^nC'}\,.\]
Then 
$\lim_{m\to \infty}k_m=\infty$ and we have, for every $m\geq \floor{(k')^nC'}$, 
\[ d_m\geq d_{\floor{(k_m+1)^nC'}} \geq \gamma e^{-(k_m+1)}\,, \]
which follows from (\ref{eq:dfloor}) together with the observation
that $k_m\geq k'$, and 
\[ m\geq \floor{k_m^nC'}>k_m^nC'-1\,, \]
and so 
\[ \frac{-\log d_m}{m^{1/n}} \leq \frac{k_m+1-\log\gamma}{(k_m^nC'-1)^{1/n}}\,.\]
This implies
\[ \limsup_{m\to \infty }\frac{-\log d_m}{m^{1/n}} \leq \frac{1}{(C')^{1/n}}\,.\]
But since $C'<C$ was arbitrary, the assertion follows. 
\end{proof}

\subsection{Sharp lower bounds for Kolmogorov widths with respect to $L^2$-topologies}
\label{sec:2.3}
The bounds for the Kolmogorov numbers for the natural embedding
$H^2(D)\hookrightarrow L^2(K)$ for a bounded pseudoconvex domain $D$ in $\mathbb{C}^n$ 
and $K$ a compact subset of $D$ with positive Lebesgue measure were obtained under the 
additional 
assumption that $\varphi$ be strictly psh and smooth on $D$. 
In order to make the connection with the relative capacity $C(K,D)$ we want to choose $\varphi$ in 
Proposition~\ref{prop:dmlowersmoothphi} to be the 
upper semicontinuous regularisation $u_{K,D}^*$ of the 
relative extremal function $u_{K,D}$. 
However, even if $D$ is bounded and hyperconvex, and $K$ is regular in $D$, 
then $u_{K,D}=u_{K,D}^*$ is, in general, merely continuous on the closure $\clo{D}$ and not necessarily smooth there. 
Nevertheless, using results of Cegrell \cite{Ce1}, we will be able to approximate $u_{K,D}^*$ 
by smooth functions with the desired properties. 

In order to state these results we need some more notation. 
Let $PSH^-(D)$ denote the set of non-positive 
psh functions on $D$ and 
${\cal E}_0(D)$ the class of bounded psh functions $\psi$ 
such that $\lim_{z\to \xi} \psi(z) = 0$ for all $\xi \in \partial D$ and $\int_ D (dd^c \psi)^n < +\infty$. 
Note that if a function $\varphi \in {\cal E}_0(D)$ is continuous in $D$, then 
$\varphi$ is actually continuous up to the boundary.

\begin{Theorem}[\cite{Ce1}] 
\label{theor21}
Let $D$ be a hyperconvex domain in $\bbC^n$. For 
every $u \in PSH^-(D)$, 
there is a decreasing sequence $(\psi_j)_{j\in \mathbb{N}}$ 
of functions in ${\cal E}_0 \cap {C}^\infty(D)$ such that $\psi_j \to u$ as $j \to \infty$, pointwise in $D$.
\end{Theorem}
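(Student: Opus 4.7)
The plan is to construct the approximating sequence in two steps: first produce a decreasing sequence $(u_j)\subset\mathcal{E}_0$ of bounded psh functions converging to $u$; second, smooth each $u_j$ from above to obtain $\psi_{j,\epsilon}\in\mathcal{E}_0\cap C^\infty(D)$; finally, extract a diagonal subsequence $(\psi_j)$ with the required monotonicity. Throughout, I would fix a continuous psh exhaustion $\varrho:D\to[-1,0)$ supplied by hyperconvexity.

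For the first step, define $u_j:=\max(u,j\varrho)$. Since $u\le 0$ and $-1\le\varrho<0$, each $u_j$ is psh with values in $[-j,0]$. As $j\uparrow\infty$, the quantity $j\varrho$ decreases pointwise in $D$ to $-\infty$, so $(u_j)$ is pointwise decreasing with limit $u$. The inequality $u_j\ge j\varrho$ forces $u_j\to 0$ at $\partial D$, while an integration by parts against $\varrho$ (together with the Chern--Levine--Nirenberg estimate for bounded psh functions) yields $\int_D(dd^c u_j)^n<\infty$, placing $u_j\in\mathcal{E}_0$.

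For the second step, fix $j$. On the interior subdomain $D_\epsilon:=\{z\in D:\mathrm{dist}(z,\partial D)>\epsilon\}$, the convolution $v_{j,\epsilon}:=u_j\ast\rho_\epsilon$ with a standard radial mollifier is smooth psh, and by the sub-mean-value property decreases monotonically to $u_j$ as $\epsilon\downarrow 0$. To extend $v_{j,\epsilon}$ to all of $D$ while preserving the zero boundary value, I would first replace $\varrho$ by a smoothed version $\tilde\varrho\in C^\infty(D)$ (available by smoothing $\varrho$ on an exhaustion of $D$ by strictly pseudoconvex subdomains with smooth defining functions), then choose $M=M(j,\epsilon)$ large enough that $v_{j,\epsilon}>M\tilde\varrho$ on a prescribed compact set $K\subset D$, while $M\tilde\varrho$ dominates $v_{j,\epsilon}$ in a thin collar around $\partial D$. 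Gluing the two via the regularised maximum $\max_\delta$ produces $\psi_{j,\epsilon}\in\mathcal{E}_0\cap C^\infty(D)$: it equals $v_{j,\epsilon}$ on $K$, equals $M\tilde\varrho$ (hence tends to $0$) near $\partial D$, and satisfies $\psi_{j,\epsilon}\downarrow u_j$ on compact subsets of $D$ as $\epsilon\downarrow 0$.

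Finally, I would choose a diagonal sequence $\psi_j:=\psi_{j,\epsilon_j}$ with $\epsilon_j\downarrow 0$ fast enough that $(\psi_j)$ is pointwise decreasing on $D$. The main obstacle lies precisely in arranging this monotonicity, since the smoothings of $u_j$ and $u_{j+1}$ at fixed scales need not be directly comparable. The remedy exploits $\psi_{j+1,\epsilon}\downarrow u_{j+1}\le u_j\le\psi_{j,\epsilon_j}$ as $\epsilon\downarrow 0$, together with continuity of the smooth dominant $\psi_{j,\epsilon_j}$ and compactness of the sublevel set $\{\varrho\le -1/(j+1)\}$: a recursive choice of sufficiently small $\epsilon_{j+1}$, combined with a compatible tightening of the boundary-collar parameter in the second-step patching, enforces $\psi_{j+1,\epsilon_{j+1}}\le\psi_{j,\epsilon_j}$ on all of $D$. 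The resulting $(\psi_j)$ then lies in $\mathcal{E}_0\cap C^\infty(D)$, decreases pointwise, and converges to $u$.
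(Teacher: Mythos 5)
The paper offers no proof of this statement: it is imported verbatim from Cegrell's paper [Ce1], so I can only judge your argument on its own merits. Its overall shape (truncate from below, mollify, glue at the boundary, diagonalise) is reasonable, and the first step $u_j=\max(u,j\varrho)$ is essentially fine modulo one point: to conclude $\int_D(dd^cu_j)^n<\infty$ you need to dominate $u_j$ from below by \emph{some} member of $\mathcal{E}_0$, i.e.\ you need $\varrho$ itself to have finite total Monge--Amp\`ere mass; the Chern--Levine--Nirenberg estimate only controls the mass on compact subsets, so $\varrho$ must first be replaced by, say, a relative extremal function of a regular compact set. The real trouble is in the second step, in two places. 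First, you assume the existence of a \emph{globally defined} $\tilde\varrho\in PSH(D)\cap C^\infty(D)$ tending to $0$ on $\partial D$. For a general hyperconvex domain this is precisely the content of Corollary~\ref{cor21}, which the paper (and Cegrell) \emph{deduce from} Theorem~\ref{theor21}; mollifying $\varrho$ only produces smooth psh functions on relatively compact subsets, and since $\varrho$ need not be strictly psh, Richberg-type regularisation is not available. Your argument is therefore circular at its key point.

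Second, even granting $\tilde\varrho$, the gluing inequality points the wrong way. To extend $\max_\delta(v_{j,\epsilon},M\tilde\varrho)$ by $M\tilde\varrho$ across $\partial D_\epsilon$ you need $M\tilde\varrho\ge v_{j,\epsilon}+\delta$ on a full collar just inside $\partial D_\epsilon$. But $v_{j,\epsilon}\ge u_j\ge u$, and $u$ may take values arbitrarily close to $0$ at points of \emph{every} collar (take $u=\max(\log|z_1|,-1)$ on the unit ball and points near $(1,0,\dots,0)$), whereas $M\tilde\varrho$ is bounded away from $0$ at distance $\epsilon$ from $\partial D$; enlarging $M$, which you need in order to win on $K$, only makes this worse. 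So the regularised maximum coincides with $v_{j,\epsilon}$ at points accumulating on $\partial D_\epsilon$ and does not extend to $D$. The known repair (Cegrell's, in spirit) is to first pass to a decreasing sequence in $\mathcal{E}_0\cap C(\overline{D})$ (his 2004 approximation theorem, via envelopes rather than $\max(u,j\varrho)$, which is not continuous up to $\overline{D}$), and then exploit \emph{uniform} continuity up to the boundary to patch convolutions at scales shrinking toward $\partial D$ over a locally finite cover, with small constant shifts absorbing the errors. As written, your proposal does not get past the construction of a single element of $\mathcal{E}_0\cap C^\infty(D)$.
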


\begin{Corollary}[\cite{Ce1}]
\label{cor21}
Let $D$ be a bounded hyperconvex domain in $\bbC^n$. Then there is a strictly psh exhaustion function 
$\psi \in {\cal E}_0 \cap {C}^\infty(D)$ for $D$.
\end{Corollary}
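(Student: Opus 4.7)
The plan is to first build a continuous, non-positive, strictly psh function on $D$ out of the exhaustion furnished by hyperconvexity, and then smooth it via Theorem~\ref{theor21}.

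By hyperconvexity of $D$, pick a continuous psh exhaustion $\rho: D\to (-\infty,0)$. Since $D$ is bounded, $M := \sup_{z\in D}\abs{z}^2$ is finite and $h(z) := \abs{z}^2 - M$ is smooth, non-positive on $D$, and strictly psh on $\bbC^n$. For any fixed $\epsilon > 0$ the function $u := \rho + \epsilon h$ lies in $PSH^-(D)$, is continuous on $D$, and satisfies $dd^c u \ge \epsilon\,dd^c\abs{z}^2$ in the sense of currents; in particular $u$ is continuous and strictly psh. Apply Theorem~\ref{theor21} to $u$ to obtain a decreasing sequence $(\psi_j)\subset\mathcal{E}_0\cap C^\infty(D)$ with $\psi_j\searrow u$ pointwise. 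Each $\psi_j$ is smooth, psh, and in $\mathcal{E}_0$ (hence non-positive, vanishing on $\partial D$, with finite Monge-Amp\`ere mass). Moreover $\psi_j\ge u$ forces $\{\psi_j<c\}\subset\{u<c\}\subset\subset D$ for every $c<0$, so each $\psi_j$ is automatically a psh exhaustion of $D$.

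The main obstacle is upgrading $\psi_j$ from psh to \emph{strictly} psh without leaving $\mathcal{E}_0\cap C^\infty(D)$, since Theorem~\ref{theor21} as stated does not assert that strict plurisubharmonicity of the limit is inherited by the approximants. My proposal is to localise a strict-psh perturbation by setting
\[
\widetilde\psi_j := \psi_j + \delta_j\, \eta(\psi_j)\, h,
\]
where $\eta:(-\infty,0)\to[0,1]$ is a smooth non-decreasing cut-off with $\eta(t)=0$ for $t\ge -\tau_j/2$ and $\eta(t)=1$ for $t\le -\tau_j$. In the boundary collar $\{\psi_j>-\tau_j/2\}$ the perturbation vanishes, so $\widetilde\psi_j=\psi_j$ retains the $\mathcal{E}_0\cap C^\infty$ property, while on the interior region $\{\psi_j<-\tau_j\}$ one has $\widetilde\psi_j=\psi_j+\delta_j h$, which is smooth and strictly psh. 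The delicate step is the transition region, where the potentially indefinite mixed derivatives of $\eta(\psi_j)\,h$ must be absorbed into $dd^c\psi_j$. Since Cegrell's proof of Theorem~\ref{theor21} proceeds by convolutional regularisation on interior compacts together with a cut-off towards $\partial D$, and convolution preserves lower bounds on the complex Hessian, the strict-psh lower bound $dd^c u\ge \epsilon\,dd^c\abs{z}^2$ transfers to $dd^c\psi_j\ge (\epsilon/2)\,dd^c\abs{z}^2$ on any prescribed compact subset of $D$ for $j$ large enough; with this in hand, a sufficiently small choice of $\delta_j$ and $\tau_j$ yields positivity of $dd^c\widetilde\psi_j$ on all of $D$, producing the desired smooth strictly psh exhaustion in $\mathcal{E}_0\cap C^\infty(D)$.
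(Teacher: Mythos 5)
The paper offers no proof of this statement: it is imported verbatim from Cegrell \cite{Ce1}, so your argument has to stand on its own, and as written it does not. The first half is essentially fine modulo one slip: $u=\rho+\epsilon h$ need not tend to $0$ on $\partial D$ (only $\rho$ does), so the inclusion $\{u<c\}\subset\subset D$ is false in general; the exhaustion property of each $\psi_j$ instead follows directly from $\psi_j\in\mathcal{E}_0$, which forces $\psi_j(z)\to 0$ as $z\to\partial D$. The real problems are in the second half. Your key claim, that $dd^c\psi_j\ge(\epsilon/2)\,dd^c\abs{z}^2$ on any prescribed compact for $j$ large, is not a consequence of Theorem~\ref{theor21}: the theorem asserts only that the approximants are psh, and a decreasing sequence in $\mathcal{E}_0\cap C^\infty(D)$ converging to a strictly psh $u$ can perfectly well be locally constant on large open sets (think of smoothings of $\max(u,-1+1/j)$), so no lower bound on the complex Hessian survives. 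Appealing to the internal mechanics of Cegrell's proof is not available to you from the statement, and his construction is in any case not a plain convolution-plus-cutoff.

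Even granting that bound, it is asserted only on compact subsets, whereas your transition region $\{-\tau_j\le\psi_j\le-\tau_j/2\}$ is a collar near $\partial D$ that escapes every fixed compact. There you know only $dd^c\psi_j\ge 0$, and every term of $dd^c\bigl(\delta_j\,\eta(\psi_j)h\bigr)$ --- including the indefinite ones $\delta_j h\,\eta''(\psi_j)\,d\psi_j\wedge d^c\psi_j$ and $\delta_j\eta'(\psi_j)\bigl(d\psi_j\wedge d^ch+dh\wedge d^c\psi_j\bigr)$, and the manifestly non-positive one $\delta_j h\,\eta'(\psi_j)\,dd^c\psi_j$ --- scales linearly in $\delta_j$, so shrinking $\delta_j$ cannot make the positive contributions dominate; $\widetilde\psi_j$ is not shown to be psh there at all. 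Finally, and decisively, on the outer collar $\{\psi_j>-\tau_j/2\}$ you leave $\widetilde\psi_j=\psi_j$ untouched, and $\psi_j$ is only psh; so even if the gluing worked, the resulting function would fail to be strictly psh near $\partial D$. Strict plurisubharmonicity on \emph{all} of $D$ is precisely the content of the corollary and is what its applications in Corollary~\ref{cor:regsupCKD} and Proposition~\ref{prop:dmlowersmoothphi} require; it cannot be produced by an interior perturbation of the merely psh approximants supplied by Theorem~\ref{theor21}, which is why one has to fall back on Cegrell's (or a Kerzman--Rosay type) global construction.
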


Recall that for any compact subset $K$ in an open set $D$, the relative capacity $C(K,D)$ is 
defined as follows
\[ C(K,D)=\sup \all{\int_K\,(dd^c\varphi)^n}{\varphi \in PSH(D,(-1,0))}\,.\]

Using the previous results we obtain the following characterisation 
of the relative capacity $C(K,D)$ in the case where $D$ is a bounded hyperconvex domain. 

\begin{Corollary}
\label{cor:regsupCKD}
Let $D$ be a bounded hyperconvex domain in $\mathbb{C}^n$ containing a compact subset $K$. 
Then 
\[ C(K,D)=\sup \all{\int_K\,(dd^c\varphi)^n}{\varphi\in SPSH(D,(-1,0))\cap C^\infty(D)}\,.\]
Here, $SPSH(D,(-1,0))$ denotes the collection of all strictly psh functions on $D$ with values in 
$(-1,0)$. 
\end{Corollary}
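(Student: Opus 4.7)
The inequality $\geq$ is immediate because $SPSH(D,(-1,0))\cap C^\infty(D)\subset PSH(D,(-1,0))$. The content of the corollary is therefore the reverse inequality: I must show that every competitor $u\in PSH(D,(-1,0))$ in the definition of $C(K,D)$---most conveniently the relative extremal function $u_{K,D}^*$ itself, which satisfies $\int_K(dd^c u_{K,D}^*)^n=C(K,D)$ by the identity recalled in the preliminaries---can be approximated by smooth strictly psh functions taking values in $(-1,0)$ whose Monge--Amp\`ere mass on $K$ recovers $C(K,D)$ in the limit.

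The construction of the approximants proceeds as follows. Fix a small parameter $\eta\in(0,1)$ and pass to $\tilde u=(1-\eta)u_{K,D}^*\in PSH(D,[-(1-\eta),0])$, which still satisfies $\int_K(dd^c\tilde u)^n=(1-\eta)^n C(K,D)$ but is now bounded away from $-1$ uniformly on $D$. Applying Cegrell's Theorem~\ref{theor21} to $\tilde u\in PSH^-(D)$ produces a decreasing sequence $(\psi_j)\subset \mathcal{E}_0(D)\cap C^\infty(D)$ with $\psi_j\downarrow\tilde u$ pointwise, and the maximum principle combined with the boundary vanishing $\psi_j\to 0$ on $\partial D$ forces $-(1-\eta)\leq\psi_j<0$ on $D$. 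To promote the $\psi_j$ to strictly psh functions, let $\rho\in \mathcal{E}_0(D)\cap C^\infty(D)$ be the smooth strictly psh exhaustion furnished by Corollary~\ref{cor21}, put $M=\sup_D|\rho|$, and define
\[ \varphi_{j,\delta}=\psi_j+\delta\rho,\qquad 0<\delta<\eta/M. \]
Then $\varphi_{j,\delta}$ is smooth, strictly psh, and the choice of $\delta$ ensures $\varphi_{j,\delta}\geq -(1-\eta)-\delta M>-1$ while $\varphi_{j,\delta}<0$ strictly, so that $\varphi_{j,\delta}\in SPSH(D,(-1,0))\cap C^\infty(D)$ and is admissible for the right-hand supremum.

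The remaining task is to show $\liminf_{j\to\infty,\,\delta\to 0}\int_K(dd^c\varphi_{j,\delta})^n\geq (1-\eta)^n C(K,D)$; letting $\eta\to 0$ then closes the argument. By the Bedford--Taylor monotone continuity of the Monge--Amp\`ere operator, $(dd^c\varphi_{j,\delta})^n$ converges weakly to $(dd^c\tilde u)^n$ in the joint limit, and Cegrell's theory for $\mathcal{E}_0$ guarantees convergence of the total masses $\int_D(dd^c\varphi_{j,\delta})^n\to(1-\eta)^n C(K,D)$. The main obstacle is the familiar mismatch between weak convergence and mass on closed sets: weak convergence yields only the upper bound $\limsup\int_K\leq\int_K$, the opposite of what is needed. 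I would overcome it by exploiting that $(dd^c u_{K,D}^*)^n$ is concentrated on $K$: for an open neighborhood $V$ of $K$ with $\overline V\subset D$ and $(dd^c u_{K,D}^*)^n(\partial V)=0$, total-mass convergence combined with weak convergence on $V$ and on $D\setminus\overline V$ forces $\int_V(dd^c\varphi_{j,\delta})^n\to(1-\eta)^n C(K,D)$, while weak upper-semicontinuity on the closed annular region $\overline V\setminus K^\circ$ controls the stray mass by $(1-\eta)^n(dd^c u_{K,D}^*)^n(\partial K)$. The residual possibility that $(dd^c u_{K,D}^*)^n$ charges $\partial K$ is the main technical hurdle; I would handle it by a fattening argument, replacing $K$ with a closed $\sigma$-neighborhood $K_\sigma$ whose boundary carries no Monge--Amp\`ere mass (possible for all but countably many $\sigma$), running the above approximation on $K_\sigma$, and then returning to $K$ via outer regularity of the relative capacity $C(K,D)=\lim_{\sigma\to 0}C(K_\sigma,D)$ together with a diagonal extraction.
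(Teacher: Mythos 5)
Your construction of the competitors is essentially the paper's own: Cegrell's approximation (Theorem~\ref{theor21}) applied to a copy of $u_{K,D}^*$, followed by the addition of a small multiple of the smooth strictly psh exhaustion from Corollary~\ref{cor21} and a renormalisation keeping the values in $(-1,0)$ (the paper uses $\varphi_{j,\epsilon}=(\psi_j+\epsilon\psi)/(1+\epsilon M)$ instead of your $(1-\eta)$--rescaling plus $\delta\rho$, an immaterial difference). Your intermediate analysis is also sound up to the estimate $\liminf_{j}\int_K(dd^c\varphi_{j,\delta})^n\geq(1-\eta)^nC(K,D)-(1-\eta)^n(dd^cu_{K,D}^*)^n(\partial K)-O(\delta)$, and you have correctly identified the crux: weak convergence of the Monge--Amp\`ere measures controls masses of compact sets only from above. (For what it is worth, the paper's proof passes over exactly this point, deducing $\lim_j\int_K(dd^c\varphi_{j,\epsilon})^n=\int_K(dd^c(\cdot))^n$ ``in particular'' from weak convergence.)

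The genuine gap is your final fattening step, and it cannot be repaired as proposed. First, producing smooth competitors with large mass on $K_\sigma$ gives no lower bound on their mass on $K$: the supremum in the statement is over $\int_K$, the inequality $\int_{K_\sigma}(dd^c\varphi)^n\geq\int_K(dd^c\varphi)^n$ points the wrong way, and outer regularity $C(K_\sigma,D)\to C(K,D)$ does not move mass from $K_\sigma\setminus K$ onto $K$. Second, the ``all but countably many $\sigma$'' selection requires a fixed measure, whereas the measure that must not charge $\partial K_\sigma$, namely $(dd^cu_{K_\sigma,D}^*)^n$, varies with $\sigma$. Most importantly, the boundary term you are trying to remove is not a removable technicality: for $\varphi\in C^\infty(D)$ the measure $(dd^c\varphi)^n$ is absolutely continuous with respect to Lebesgue measure, so if $m(K)=0$ the right-hand side of the corollary is $0$; yet there are Lebesgue-null non-pluripolar compacta, e.g.\ $K=\{|z|=1/2\}$ in $D=B(0,1)\subset\mathbb{C}$, for which $u_{K,D}=\max(\log|z|/\log 2,-1)$ and $C(K,D)=2\pi/\log 2>0$. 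Hence no refinement of the limiting argument alone can establish the stated equality for arbitrary compact $K$; one needs $K$ to have enough interior to absorb the equilibrium mass (as happens for the sublevel sets $K_c$ used later in the paper, where the corner of $\max(u^*_{K,D}/(1-c'),-1+\epsilon)$ can be pushed into $K_c^\circ$), or one must integrate over shrinking neighbourhoods of $K$ rather than over $K$ itself. Your instinct that this is the main technical hurdle is therefore correct, but the hurdle lies in the statement (and in the paper's own proof), not merely in your argument.
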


\begin{proof}
First, we approximate $u_{K,D}^*$ by a decreasing sequence $(\psi_j)_{j\in \mathbb{N}}$ 
of functions in ${\cal E}_0 \cap {C}^\infty(D)$ such that $\psi_j \to u_{K,D}^*$ pointwise on $D$ as 
$j \to \infty$ (see Theorem~\ref{theor21}). 
Since $u_{K,D}^* \ge -1$ in $D$, each function $\psi_j$ satisfies $\psi_j \ge -1 $ in $D$.

Secondly, let $\psi$ be the function of Corollary~\ref{cor21} and replace 
each function $\psi_j$ by the function 
\[ \varphi_{j,\epsilon} = \frac{\psi_j + \epsilon \psi}{1+\epsilon M}\,,\]
where $\epsilon >0$ is small and $M>0$ is chosen so that $-M\le \psi\le 0$ on $\clo{D}$, which is 
possible since $\psi$ is bounded on  $\clo{D}$. Each $\varphi_{j,\epsilon}$ 
is a strictly psh exhaustion function in ${\cal E}_0 \cap {C}^\infty(D)$ 
for $D$ and $\varphi_{j,\epsilon} \ge -1$ in $D$.

Since, for fixed $\epsilon$, the sequence $(\varphi_{j,\epsilon})_{j\in \mathbb{N}}$ 
decreases in $D$  
to $\frac{u_{K,D}^* + \epsilon \psi}{1+\epsilon M}$, which is bounded in $D$, we have 
\[ \lim_{j\to \infty} (dd^c \varphi_{j,\epsilon})^n= \,\left(dd^c \left(\frac{u_{K,D}^* + 
\epsilon \psi}{1+\epsilon M}\right)\right)^n\,,\]
in the sense of weak convergence of measures, and in particular 
\[ \lim_{j\to \infty}\int_K\,(dd^c \varphi_{j,\epsilon})^n= 
\int_K\,\left(dd^c \left(\frac{u_{K,D}^* + \epsilon \psi}{1+\epsilon M}\right)\right)^n\,.\]

Moreover, since $\frac{u_{K,D}^* + \epsilon \psi}{1+\epsilon M}$ 
converges uniformly to $u_{K,D}^*$ on $\clo{D}$ when $\epsilon \to 0$, we have 
\[ \lim_{\epsilon\to 0}\int_K\,\left(dd^c\left(\frac{u_{K,D}^* + \epsilon \psi}{1+\epsilon M}\right)
\right)^n 
=\int_K (dd^c u_{K,D}^*)^n=C(K,D)\,.\]
\end{proof}

Combining the results of this and the previous subsections we now obtain the following sharp 
lower bound 
for the Kolmogorov numbers of the natural embedding $H^2(D)\hookrightarrow L^2(K)$. 

\begin{Theorem}
\label{thm:sharpL2lowerbounds}
Let $D$ be a bounded hyperconvex domain in $\mathbb{C}^n$ containing a compact subset $K$ 
with positive 
Lebesgue measure. Then 
\[ \limsup_{m\to \infty} \frac{-\log d_m(H^2(D)\hookrightarrow L^2(K))}{m^{1/n}} 
\leq 2\pi \left ( \frac{n!}
{C(K,D)} \right )^{1/n} \,.\] 
\end{Theorem}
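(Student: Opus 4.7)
The plan is to obtain the theorem as an essentially immediate corollary of the two main ingredients established in this subsection and the preceding one: Proposition~\ref{prop:dmlowersmoothphi}, which gives the sharp bound whenever one fixes a smooth, strictly psh test function $\varphi$ admissible for the relative capacity, and Corollary~\ref{cor:regsupCKD}, which asserts that $C(K,D)$ can be approximated by $\int_K (dd^c\varphi)^n$ using precisely such functions $\varphi \in SPSH(D,(-1,0)) \cap C^\infty(D)$. The hyperconvexity of $D$ is used only through Corollary~\ref{cor:regsupCKD} (via Cegrell's regularisation and a strictly psh exhaustion).

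Concretely, I would proceed as follows. Fix $\varepsilon>0$. By Corollary~\ref{cor:regsupCKD} applied to the bounded hyperconvex domain $D$, there exists $\varphi=\varphi_\varepsilon \in SPSH(D,(-1,0)) \cap C^\infty(D)$ with
\[ \int_K (dd^c\varphi_\varepsilon)^n > C(K,D) - \varepsilon. \]
Since $\varphi_\varepsilon$ takes values in the open interval $(-1,0)$, it satisfies $\varphi_\varepsilon \leq 0$ on $D$ and $\varphi_\varepsilon \geq -1$ on $K$, and it is smooth and strictly psh. Hence Proposition~\ref{prop:dmlowersmoothphi} applies and yields
\[ \limsup_{m\to \infty} \frac{-\log d_m(H^2(D)\hookrightarrow L^2(K))}{m^{1/n}} \leq 2\pi\left(\frac{n!}{\int_K (dd^c\varphi_\varepsilon)^n}\right)^{1/n} \leq 2\pi\left(\frac{n!}{C(K,D)-\varepsilon}\right)^{1/n}. \]
The left-hand side does not depend on $\varphi_\varepsilon$ or $\varepsilon$, so letting $\varepsilon \to 0$ produces the desired inequality.

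There is no real obstacle here; all the substantive work has been done earlier, namely in the spectral asymptotics for Toeplitz operators (Proposition~\ref{cor20}, which rests on the Bergman kernel asymptotics of Theorem~\ref{theor20}) and in the smoothing procedure underlying Corollary~\ref{cor:regsupCKD}. The only technical points that merit a brief mention are (i) that $C(K,D)>0$, which follows from $K$ having positive Lebesgue measure and hence being non-pluripolar, so that $C(K,D)-\varepsilon>0$ for $\varepsilon$ small, and (ii) that $\int_K(dd^c\varphi_\varepsilon)^n>0$, which is automatic because smoothness and strict plurisubharmonicity of $\varphi_\varepsilon$ make $(dd^c\varphi_\varepsilon)^n$ a positive multiple of Lebesgue measure on $D$. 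Together these ensure that both applications of Proposition~\ref{prop:dmlowersmoothphi} and the final limit are meaningful.
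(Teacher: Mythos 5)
Your proposal is correct and is essentially the paper's own argument: the paper's proof of this theorem is literally the one-line deduction from Proposition~\ref{prop:dmlowersmoothphi} and Corollary~\ref{cor:regsupCKD}, which you have simply written out with the $\varepsilon$-approximation made explicit. The two technical remarks you add (positivity of $C(K,D)$ and of $\int_K(dd^c\varphi_\varepsilon)^n$, the latter using $m(K)>0$) are accurate and harmless elaborations.
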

\begin{proof}
Follows from Proposition~\ref{prop:dmlowersmoothphi} 
and Corollary~\ref{cor:regsupCKD}.
\end{proof}

\subsection{From $L^2$ to $L^\infty$}
\label{sec:2.4}
So far we have obtained lower bounds for the Kolmogorov widths of holomorphic functions in spaces carrying $L^2$-topologies. We shall now
consider bounds with respect to spaces arising from supremum norm topologies, as in the original 
formulation of Kolmogorov's problem. We shall achieve this by using 
simple bounds for the Kolmogorov numbers coupled with approximation arguments.

We start with the following simple observation which allows us to change the topology on the 
target space. 

\begin{Lemma}
\label{lem:H2DCK}
Let $D$ be a domain in $\mathbb{C}^n$ and $K$ a compact subset of $D$ 
with positive Lebesgue measure. Then 
\[ d_m(H^2(D)\hookrightarrow A(K))
\geq \frac{1}{\sqrt{m(K)}}d_m(H^2(D) \hookrightarrow 
L^2(K))\quad (\forall m\in \mathbb{N})\,.\]
\end{Lemma}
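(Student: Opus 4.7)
The plan is to factor the embedding $H^2(D)\hookrightarrow L^2(K)$ through $A(K)$ and exploit submultiplicativity of the Kolmogorov numbers (property (\ref{eq:dmsubm})). More precisely, I would write
\[ J_{L^2}:H^2(D)\hookrightarrow L^2(K)\,,\quad J_A:H^2(D)\hookrightarrow A(K)\,,\]
and introduce the canonical embedding $I:A(K)\hookrightarrow L^2(K)$ obtained by viewing a continuous function on $K$ as an element of $L^2(K)$. Then $J_{L^2}=I\circ J_A$.

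The elementary ingredient is the norm bound $\|I\|_{A(K)\to L^2(K)}\leq \sqrt{m(K)}$, which follows from
\[ \|f\|_{L^2(K)}^2=\int_K\abs{f}^2\,dm\leq m(K)\norm{f}_{C(K)}^2\quad (\forall f\in A(K))\,,\]
using that elements of $A(K)\subset C(K)$ are bounded on $K$ and that $m(K)<\infty$ by compactness. Together with the submultiplicativity property (\ref{eq:dmsubm}) applied to the Kolmogorov numbers, this yields
\[ d_m(J_{L^2})=d_m(I\circ J_A)\leq \|I\|_{A(K)\to L^2(K)}\cdot d_m(J_A)\leq \sqrt{m(K)}\, d_m(J_A)\,,\]
which, upon rearrangement, is precisely the claimed inequality.

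There is no real obstacle here: the only point that needs a moment's thought is the existence of the factoring operator $I$, i.e.~that every element of $A(K)$ does indeed lie in $L^2(K)$ with the stated norm estimate, which is immediate from the definition of $A(K)$ as a subspace of $C(K)$ together with finiteness of $m(K)$. Everything else is a direct invocation of the general machinery recalled in Subsection~\ref{sec:1.3}.
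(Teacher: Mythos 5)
Your proposal is correct and coincides with the paper's own argument: both factor the embedding $H^2(D)\hookrightarrow L^2(K)$ through $A(K)$, bound the norm of $A(K)\hookrightarrow L^2(K)$ by $\sqrt{m(K)}$, and conclude via the submultiplicativity property (\ref{eq:dmsubm}). Nothing is missing.
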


\begin{proof}
Let
\begin{align*}
J_1&:H^2(D)\hookrightarrow A(K)\,,\\
J_2&:A(K)\hookrightarrow L^2(K)\,,\\
J&:H^2(D)\hookrightarrow L^2(K)\,,
\end{align*}
denote the natural embeddings. Then $J=J_2J_1$. Clearly, $J_2$ is
bounded with $\|J_2\|\leq \sqrt{m(K)}$, 
so 
\[ d_m(J)=d_m(J_2J_1)\leq \|J_2\|d_m(J_1) \quad (\forall m\in
\mathbb{N})\,,\]
and the assertion follows.  
\end{proof}

In order to change the topology on the original space we use a similar argument. 

\begin{Lemma}
\label{lem:H2DpCK}
Let $D$ and $D'$ be domains in $\mathbb{C}^n$ with $D\subset\subset D'$ 
and let $K$ be a compact subset of $D$.  
Then there is a constant $C>0$, depending on $D$ and $D'$ only, and not on $m$, such that 
\[ d_m(H^\infty(D)\hookrightarrow A(K))
\geq C d_m(H^2(D')\hookrightarrow A(K))
\quad (\forall m\in \mathbb{N})\,.\]
\end{Lemma}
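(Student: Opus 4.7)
The plan is exactly parallel to the proof of Lemma~\ref{lem:H2DCK}: I would construct a bounded linear map from $H^2(D')$ into $H^\infty(D)$, factor the embedding $H^2(D')\hookrightarrow A(K)$ through this map, and then apply the submultiplicativity of the Kolmogorov numbers recorded in (\ref{eq:dmsubm}).

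The first step is to verify that restriction defines a bounded operator $I:H^2(D')\to H^\infty(D)$ with norm depending only on $D$ and $D'$. Since $\overline{D}$ is a compact subset of the open set $D'$, there exists $r>0$ such that $B(z,r)\subset D'$ for every $z\in \overline{D}$. The standard sub-mean value property for holomorphic functions applied to $|f|^2$ on $B(z,r)$ then gives
\[ |f(z)|^2 \le \frac{1}{m(B(z,r))}\int_{B(z,r)}|f|^2\,dm \le \frac{1}{m(B(0,r))}\|f\|_{H^2(D')}^2 \]
uniformly in $z\in \overline{D}$ for every $f\in H^2(D')$. Hence $I:H^2(D')\to H^\infty(D)$ is bounded with operator norm at most $m(B(0,r))^{-1/2}$, a constant depending solely on $D$ and $D'$.

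Writing $J:H^\infty(D)\hookrightarrow A(K)$ and $J':H^2(D')\hookrightarrow A(K)$ for the canonical embeddings, I observe that $J'=JI$. Applying the submultiplicativity property (\ref{eq:dmsubm}) then gives
\[ d_m(J')=d_m(JI)\le \|I\|\,d_m(J)\quad (\forall m\in \mathbb{N})\,, \]
which, after rearrangement, is the announced inequality with the choice $C=\|I\|^{-1}$. The only substantive step is the sub-mean value estimate yielding $\|I\|<\infty$; once that is in place the rest is a purely formal manipulation of Kolmogorov numbers, so I anticipate no real obstacle.
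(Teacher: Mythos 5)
Your proposal is correct and follows essentially the same route as the paper: the same factorisation $J' = J\circ I$ through the bounded restriction $H^2(D')\to H^\infty(D)$ followed by submultiplicativity of the Kolmogorov numbers, the only difference being that you prove the boundedness of $I$ directly via the sub-mean value inequality rather than citing the standard Bergman-space estimate, which is the same argument in substance.
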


\begin{proof}
Let
\begin{align*}
J_1&:H^2(D')\hookrightarrow H^\infty(D)\,,\\
J_2&:H^\infty(D)\hookrightarrow A(K)\,,\\
J&:H^2(D')\hookrightarrow A(K)\,,
\end{align*}
denote the natural embeddings. Then $J=J_2J_1$. Now $J_1$ is bounded
by a standard result for Bergman spaces (see, for example,
\cite[Lemma 1.4.1]{krantzbook82}) and clearly non-zero, 
so 
\[ d_m(J)=d_m(J_2J_1)\leq d_m(J_2)\|J_1\| \quad (\forall m\in \mathbb{N})\,,\]
and the assertion follows with $C=\|J_1\|^{-1}$. 
\end{proof}

Before stating and proving the main result of this subsection, we require one more 
result, a convergence lemma for strictly hyperconvex domains $D$. 
For such domains, using the same notation as in Subection~\ref{sec:1.1}, 
it follows that there exists a bounded domain $\Omega$ and an exhaustion function 
$\varrho \in PSH( \Omega, (-\infty,1))\cap C(\Omega)$ 
such that $D=\all{z\in\Omega}{\varrho (z)<0}$ and for all real numbers $c \in [0,1]$, the 
open set $\all{z \in \Omega}{\varrho (z)<c}$ is connected.
Given any integer $j \ge 1$, we now define 
\begin{equation}
\label{eq:Djdef}
D_j=\all{z\in\Omega}{\varrho (z)<1/j}\,.
\end{equation} 
This decreasing sequence of bounded hyperconvex domains $(D_j)_j$ satisfies the following 
lemma which is a generalisation of Lemma $2.6$ in \cite{N4}. 

\begin{Lemma}
\label{lem:10}
Let $D$ be a strictly hyperconvex domain in $\mathbb{C}^n$ and 
let $K$ be a compact subset of $D$. If $(D_j)_j$  denotes the sequence of bounded hyperconvex domains defined in (\ref{eq:Djdef}), 
then the increasing sequence $(u_{K,D_j}^*)_j$ 
converges quasi-everywhere in $D$ to $u_{K,D}^*$ and the 
increasing sequence of capacities $(C(K,D_j))_j$ converges to the capacity $C(K,D)$. 
\end{Lemma}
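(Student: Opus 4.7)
The plan is to first establish quasi-everywhere convergence of the extremal functions on $D$, then deduce convergence of the capacities from the Bedford--Taylor weak continuity of the Monge--Amp\`ere operator along monotone sequences. Throughout one may assume $K$ is non-pluripolar (otherwise all relevant functions and capacities vanish identically). The monotonicity is immediate: since $D\subset D_{j+1}\subset D_j$ for all $j$, the restriction of any competitor in the definition of $u_{K,D_j}$ to a smaller domain is still a competitor in the definition of $u_{K,D_{j+1}}$ and of $u_{K,D}$, so $(u^*_{K,D_j})_j$ is pointwise increasing on $D$ and bounded above by $u^*_{K,D}$. Denoting by $u^*$ the upper semicontinuous regularisation of $u=\sup_j u^*_{K,D_j}$ on $D$, we obtain $u^*\in PSH(D)$ and $u^*\le u^*_{K,D}$ on $D$.

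The crux is the reverse inequality $u^*_{K,D}\le u^*$, which I extract from a gluing construction exploiting strict hyperconvexity. Since $D$ is hyperconvex and $K$ non-pluripolar, $u^*_{K,D}(z)\to 0$ as $z\to\partial D$ from inside $D$. Fix $\delta>0$ and define on $D_j$ the function
\[
w_{j,\delta}(z)=\begin{cases}
\max\bigl(u^*_{K,D}(z)-\delta,\; j\delta\,\varrho(z)-\delta\bigr) & z\in D,\\
j\delta\,\varrho(z)-\delta & z\in D_j\setminus D.
\end{cases}
\]
The barrier $j\delta\,\varrho-\delta$ is psh on $\Omega$, non-positive on $D_j$ (since $\varrho<1/j$ there), and tends to $-\delta$ on $\partial D$; because $u^*_{K,D}-\delta$ also tends to $-\delta$ there, the standard gluing lemma produces $w_{j,\delta}\in PSH(D_j)$. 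With $\varrho_K:=\max_K\varrho<0$ and $j$ large enough that $j\delta\,|\varrho_K|\ge 1-\delta$, one checks that $w_{j,\delta}\le 0$ on $D_j$ and $w_{j,\delta}\le -1$ on $K$, so $w_{j,\delta}$ is a competitor for $u_{K,D_j}$. This yields $u^*_{K,D}-\delta\le u^*_{K,D_j}$ on $D$; letting $j\to\infty$ and then $\delta\to 0$ gives $u^*_{K,D}\le u^*$ on $D$, hence equality quasi-everywhere.

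For the capacities, the restrictions $(u^*_{K,D_j}|_D)_j$ form a uniformly bounded, monotone increasing sequence in $PSH(D)$ converging quasi-everywhere to $u^*_{K,D}$. The Bedford--Taylor convergence theorem then yields $(dd^c u^*_{K,D_j})^n\to(dd^c u^*_{K,D})^n$ weakly as positive measures on $D$. Both measures are supported in $K$, since $u^*_{K,D_j}$ is maximal on $D_j\setminus K\supset D\setminus K$ and $u^*_{K,D}$ is maximal on $D\setminus K$; testing against a cut-off $\phi\in C_c(D)$ with $\phi\equiv 1$ on a neighbourhood of $K$ gives $C(K,D_j)=\int_K(dd^c u^*_{K,D_j})^n\to\int_K(dd^c u^*_{K,D})^n=C(K,D)$.

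The main obstacle is the gluing construction of $w_{j,\delta}$: it hinges on the strict hyperconvexity of $D$ (for the global psh barrier $\varrho$ vanishing on $\partial D$ and bounded by $1/j$ on $D_j$) combined with the boundary behaviour $u^*_{K,D}\to 0$ on $\partial D$ coming from non-pluripolarity of $K$. Once this is in place, the remaining steps are routine applications of standard pluripotential theory.
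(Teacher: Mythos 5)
Your overall strategy is sound and genuinely different from the paper's: where you force the reverse inequality $u^*_{K,D}\le \sup_j u^*_{K,D_j}$ by an explicit gluing with the barrier $j\delta\varrho-\delta$, the paper only proves $v^*:=(\lim_j u^*_{K,D_j})^*\le u^*_{K,D}$ directly, shows $v^*\to 0$ at $\partial D$ via the same kind of barrier (observing that $c(\varrho-1/j)$ is a competitor for $u_{K,D_j}$), and then closes the loop with the Bedford--Taylor comparison theorem, which simultaneously yields $v^*=u^*_{K,D}$ and $\lim_j C(K,D_j)=C(K,D)$. Your treatment of the capacities (Bedford--Taylor weak convergence of the Monge--Amp\`ere measures along the increasing sequence, plus the cut-off/support argument) is correct and arguably cleaner than routing everything through the comparison theorem; moreover, your gluing, when it works, gives convergence of the extremal functions everywhere, not merely quasi-everywhere.

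There is, however, a genuine gap in the step ``one checks that \dots\ $w_{j,\delta}\le -1$ on $K$''. The lemma does not assume $K$ regular, so $u^*_{K,D}=-1$ only quasi-everywhere on $K$: on the (possibly nonempty) pluripolar set $N=\{z\in K:\ u^*_{K,D}(z)>-1\}$ one can have $u^*_{K,D}(z)-\delta>-1$, and then $w_{j,\delta}(z)>-1$, so $w_{j,\delta}$ is not an admissible competitor in the definition of $u_{K,D_j}$, which requires $v\le -1$ at every point of $K$. Two standard repairs: (a) perform the gluing on each individual competitor $v$ of $u_{K,D}$ (these do satisfy $v\le-1$ on all of $K$, and $\max(v-\delta,\,j\delta\varrho-\delta)\le -1$ on $K$ once $j\delta\,\abs{\max_K\varrho}\ge 1-\delta$), deduce $v-\delta\le u_{K,D_j}$ on $D$, and only then take suprema and upper regularisations; or (b) keep your $w_{j,\delta}$, note that it is $\le-1$ on $K\setminus N$, and invoke the fact that the regularised relative extremal function is unchanged upon removal of a pluripolar set, so that $w_{j,\delta}\le u^*_{K\setminus N,D_j}=u^*_{K,D_j}$. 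A minor further remark: the matching condition for the gluing on $\partial D$ only requires $u^*_{K,D}\le 0$ (since the barrier equals $-\delta$ there), so the boundary limit $u^*_{K,D}\to 0$, and hence the non-pluripolarity of $K$, is not actually needed at that point; your initial reduction to non-pluripolar $K$ already disposes of the degenerate case.
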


\begin{proof}
First we note that since $K \subset D\subset D_j$, we have 
$-1\le u_{K,D_j} \le u_{K,D}$ and $u_{K,D_j}^* \le u_{K,D}^*\le 0$ in $D$ as well as 
$C(K,D_j) \le C(K,D)$ (see \cite{B-T2} or \cite[p.~120]{K2}). 

We now collect some simple properties satisfied by the functions $u_{K,D_j}$ and $u_{K,D}$.
There exists a positive constant $c$ sufficiently large such that $c\varrho \le -1$ in $K$ and 
$c(\varrho -1/j) \le u_{K,D_j}$ in $D_j$, for any $j$. 
The sequence $(u_{K,D_j}^*)_j$ is increasing in $D$, since the sequence $(D_j)_j$ is decreasing. 
Let $v$ be the function defined on $D$ by  
\[ v:=\lim_{j\to \infty} u_{K,D_j}^*\,.\]
Now $v^*$ is psh in $D$. Moreover,   
$v$ and $v^*$ are equal quasi-everywhere in $D$ (that is, 
they are equal except perhaps for a pluripolar set, see \cite{B-T2}). 
As $u_{K,D_j}^* \in PSH(D,(-1,0))$, so $v^* \in PSH(D,(-1,0))$.

Since each $u_{K,D_j}^*$ satisfies $u_{K,D_j}^* \le u_{K,D}^*$ in $D$, it follows that 
$v^*$ also satisfies $v^* \le u_{K,D}^*$ in $D$.

Moreover, the sequence of positive measures $((dd^c u_{K,D_j}^*)^n)_j$ converges to the positive 
measure $(dd^c v^*)^n$ 
in the weak$^*$-topology (see \cite{B-T2} or \cite[p.~125]{K2}). 
In particular, since for any $j$ $(dd^c u_{K,D_j}^*)^n=0$ in $D_j\setminus K$, 
we have $(dd^c v^*)^n=0$ in $D\setminus K$.

Note that $v^*(w)$ converges to $0$ when $w \to z$, for any $z \in \partial D$. 
Indeed, for any $j$ we have  
\[ -c/j \le \liminf_{w \to z} u_{K,D_j}^*(w) \le \liminf_{w \to z} v^*(w)\le \limsup_{w \to z} v^*(w) \le 0\,.\]
Now recall that $C(K,D)=\int_K (dd^c u_{K,D}^*)^n = \int_D (dd^c u_{K,D}^*)^n$ and that 
the same equalities are satisfied for $C(K,D_j)$ and $u_{K,D_j}^*$. 
Thus, using 
the Comparison Theorem of Bedford and Taylor (see \cite{B-T2} or \cite[p.~126]{K2}) 
we deduce that   
\begin{multline*}
C(K,D)=\int_D (dd^c u_{K,D}^*)^n \le \int_D (dd^c v^*)^n = \int_K (dd^c v^*)^n \\
=\lim_{j\to \infty} \int_K (dd^c u_{K,D_j}^*)^n=\lim_{j\to \infty} C(K,D_j) \le C(K,D)\,,
\end{multline*}
and that $v^*$ is identical to $u_{K,D}^*$ in $D$. Since $v=v^*$ quasi-everywhere in 
$D$, the 
proof is complete.
\end{proof}

The following is the main result of this subsection, 
a sharp lower bound for the 
asymptotics of the Kolmogorov numbers  
$d_m(H^\infty(D)\hookrightarrow A(K))$ under the hypotheses of 
Theorem~\ref{thm:sharpL2lowerbounds}, except that we require a bit
more regularity for $D$, namely that $D$ be not just bounded and hyperconvex but strictly 
hyperconvex.

\begin{Theorem}
\label{theor11}
Let $D$ be a strictly hyperconvex domain in $\mathbb{C}^n$ containing a compact subset $K$ with positive 
Lebesgue measure. Then 
\[ \limsup_{m\to \infty} \frac{-\log d_m(H^\infty(D)\hookrightarrow A(K))}{m^{1/n}} 
\leq 2\pi \left ( \frac{n!}
{C(K,D)} \right )^{1/n} \,.\] 
\end{Theorem}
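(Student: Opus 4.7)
The plan is to reduce the $L^\infty$ bound for the embedding $H^\infty(D)\hookrightarrow A(K)$ to the $L^2$ bound already obtained in Theorem~\ref{thm:sharpL2lowerbounds}, and then exploit strict hyperconvexity to pass from the auxiliary bounded hyperconvex domain to $D$ itself via Lemma~\ref{lem:10}.

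\textbf{Step 1: Chain of embeddings.} Since $D$ is strictly hyperconvex, I dispose of the decreasing sequence of bounded hyperconvex domains $(D_j)_{j\geq 1}$ defined in (\ref{eq:Djdef}), and one checks immediately that $D\subset\subset D_j$ for every $j$. Fix such a $j$. Applying Lemma~\ref{lem:H2DpCK} with $D'=D_j$ gives a constant $C_j>0$ such that
\[ d_m(H^\infty(D)\hookrightarrow A(K))\geq C_j\, d_m(H^2(D_j)\hookrightarrow A(K))\quad(\forall m\in\mathbb{N}).\]
Chaining this with Lemma~\ref{lem:H2DCK} applied to $D_j$ then yields
\[ d_m(H^\infty(D)\hookrightarrow A(K))\geq \frac{C_j}{\sqrt{m(K)}}\, d_m(H^2(D_j)\hookrightarrow L^2(K))\quad(\forall m\in\mathbb{N}).\]

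\textbf{Step 2: Insertion of the sharp $L^2$ bound.} Taking logarithms, dividing by $m^{1/n}$, and taking $\limsup$, the constants drop out and one obtains
\[ \limsup_{m\to\infty}\frac{-\log d_m(H^\infty(D)\hookrightarrow A(K))}{m^{1/n}}\leq \limsup_{m\to\infty}\frac{-\log d_m(H^2(D_j)\hookrightarrow L^2(K))}{m^{1/n}}.\]
Since $D_j$ is bounded hyperconvex (the function $\varrho-1/j$ is a continuous psh exhaustion of $D_j$ with values in $(-\infty,0)$) and $K\subset D\subset D_j$ has positive Lebesgue measure, Theorem~\ref{thm:sharpL2lowerbounds} applies to the pair $(D_j,K)$ and bounds the right-hand side by $2\pi(n!/C(K,D_j))^{1/n}$. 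Therefore, for every $j$,
\[ \limsup_{m\to\infty}\frac{-\log d_m(H^\infty(D)\hookrightarrow A(K))}{m^{1/n}}\leq 2\pi\left(\frac{n!}{C(K,D_j)}\right)^{1/n}.\]

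\textbf{Step 3: Passage to the limit $j\to\infty$.} The left-hand side is independent of $j$, so one may let $j\to\infty$ on the right. By Lemma~\ref{lem:10}, the sequence $(C(K,D_j))_j$ is increasing and converges to $C(K,D)$, so $(n!/C(K,D_j))^{1/n}\to(n!/C(K,D))^{1/n}$, yielding the desired inequality.

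\textbf{Main obstacle.} The only non-routine point is ensuring the capacity convergence $C(K,D_j)\to C(K,D)$, but this is precisely what Lemma~\ref{lem:10} supplies (using the strict hyperconvexity to define the outer approximants $D_j$ via a common exhaustion function, and the Bedford--Taylor weak convergence of $(dd^c u_{K,D_j}^*)^n$ together with the Comparison Theorem to identify the limit). Without strict hyperconvexity one could not produce an \emph{outer} approximation by bounded hyperconvex domains with a controlled relative capacity, which is why the present theorem requires slightly more than the bounded hyperconvex hypothesis of Theorem~\ref{thm:sharpL2lowerbounds}.
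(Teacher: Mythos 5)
Your proof is correct and follows essentially the same route as the paper: the same chain of embeddings via Lemmas~\ref{lem:H2DpCK} and \ref{lem:H2DCK} applied to the outer domains $D_j$ from (\ref{eq:Djdef}), the same invocation of Theorem~\ref{thm:sharpL2lowerbounds} for the pair $(D_j,K)$, and the same passage to the limit using the capacity convergence of Lemma~\ref{lem:10}. Your closing remarks on why strict hyperconvexity is needed match the paper's discussion as well.
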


\begin{proof}
Combining Lemma~\ref{lem:H2DCK} and Lemma~\ref{lem:H2DpCK}, applied to 
$K \subset D \subset D_j$ with $D_j$ defined in (\ref{eq:Djdef}),  
we obtain for any $j$
\begin{multline*}
d_m(H^\infty(D)\hookrightarrow A(K)) \ge C_j d_m(H^2(D_j)\hookrightarrow A(K))\\
\ge \frac{C_j}{\sqrt{m(K)}} d_m(H^2(D_j)\hookrightarrow L^2(K))\,,
\end{multline*}
where $C_j=|| J_{H^2(D_j) \hookrightarrow H^\infty(D)}||^{-1}$.

Applying Theorem~\ref{thm:sharpL2lowerbounds} to bound 
$d_m(H^2(D_j)\hookrightarrow L^2(K))$ from below, we deduce for any $j$  
\begin{equation*}
\limsup_{m\to \infty} \frac{-\log d_m(H^\infty(D)\hookrightarrow A(K))}{m^{1/n}} 
\leq 2\pi \left ( \frac{n!}{C(K,D_j)} \right )^{1/n}\,,
\end{equation*}
and the assertion now follows from Lemma~\ref{lem:10}. 
\end{proof}

\subsection{Generalisation to non-pluripolar $K$}
\label{sec:2.5}
In the previous subsection we have obtained lower bounds for the Kolmogorov 
numbers of the embedding $J:H^\infty(D)\hookrightarrow A(K)$ for $D$ strictly 
hyperconvex and $K\subset D$ compact with positive Lebesgue measure. 
We shall now explain how to obtain Theorem~\ref{theor11} in the more general case 
where the compact set $K$ is only assumed to be non-pluripolar, thus finishing the proof 
of the first half our main theorem (Theorem~\ref{theor10}). We shall achieve this by
choosing suitable external approximations of the holomorphically convex 
hull of $K$ in $D$.

We start with the following simple observation. 
If $D$ is hyperconvex (but not necessarily bounded) and $K$ is 
a non-pluripolar compact subset of $D$, which 
we do not assume to be holomorphically convex in $D$, then the 
relative extremal function $u_{K,D}$ is lower semicontinuous on $D$, see 
\cite[Corollary~4.5.11]{K2}. 
Thus, 
the upper level sets $\all{z \in D}{u_{K,D}(z) >-1+c}$ are open in $D$, 
for any real number $0 \le c< 1$. Since, 
for any $w\in \partial D$, the relative extremal function 
$u_{K,D}(z)$ tends to $0$ as 
$z$ tends to $w$, it follows that, for any real number $0 \le c <1$, 
\begin{equation}
\label{eq:Kcdef}
K_c=\all{z \in D}{u_{K,D}(z) \le -1+c}
\end{equation}
is a compact subset of $D$, which, for $0<c<1$, 
has positive Lebesgue measure, since it contains the 
open set $\all{z\in D}{u_{K,D}^*<-1+c}$.

We also note that 
\[  \bigcap_{0 < c< 1} K_c = \hat{K}_D\,,\]
where $\hat{K}_D$ is the holomorphically convex hull of $K$ in the open 
hyperconvex set $D$. Moreover, $u_{K,D}=u_{\hat{K}_D,D}$ and 
$C(K,D)=C(\hat{K}_D,D)$.

\begin{Lemma}
\label{lem:11}
Let $D$ be a hyperconvex domain in $\mathbb{C}^n$ and let 
$K$ be a non-pluripolar compact subset of $D$.  
Then, for any real number $0<c<1$, we have 
\[ \max\{\frac{u^*_{K,D}}{1-c}, -1\} = u^*_{K_c,D} \text{  on  } D \]
and 
\[ C(K_c,D)=\frac{C(K,D)}{(1-c)^{n}}\,,\]
where $K_c$ is given by (\ref{eq:Kcdef}). 
\end{Lemma}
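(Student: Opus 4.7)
The plan is to establish $w := \max\{u^*_{K,D}/(1-c), -1\} = u^*_{K_c, D}$ by proving both inequalities, and then derive the capacity formula from a mass-conservation argument for the Monge--Amp\`ere operator.

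For the inequality $w \le u^*_{K_c, D}$, I first note that $w$ is psh with $w \le 0$, and that on the set $K_c \setminus N$ (where $N := \{u^*_{K,D} > u_{K,D}\}$ is pluripolar) one has $u^*_{K,D} = u_{K,D} \le -1+c$, so $u^*_{K,D}/(1-c) \le -1$ and hence $w = -1$ there. Thus $w$ lies in the Perron family defining $u_{K_c \setminus N, D}$, so $w \le u^*_{K_c \setminus N, D}$. The standard pluripolar-invariance of the regularised relative extremal function ($u^*_{E \cup P, D} = u^*_{E, D}$ for pluripolar $P$) then gives $u^*_{K_c, D} = u^*_{K_c \setminus N, D}$, yielding the claim.

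For the reverse inequality $u^*_{K_c, D} \le w$ I use the Bedford--Taylor comparison principle. For any test function $g$ in the Perron family defining $u_{K_c, D}$ I show $g \le w$: on $K_c$ we have $g \le -1 \le w$; on the open set $D \setminus K_c$ I compare the bounded psh functions $u := u^*_{K,D}$ and $(1-c)g$. The Monge--Amp\`ere measure $(dd^c u)^n$ vanishes on $D \setminus K_c$ (being supported in $K \subset K_c$), while $\liminf_{z \to z_0}(u - (1-c)g) \ge 0$ holds on $\partial(D \setminus K_c)$: on $\partial D$ because $u \to 0$ (hyperconvexity of $D$ together with $K$ non-pluripolar) and $g \le 0$; on $\partial K_c$ because $\liminf_{z \to z_0,\, z \notin K_c} u(z) \ge -1+c$ (from $u_{K,D} > -1+c$ off $K_c$) while $\limsup_{z \to z_0}(1-c)g(z) \le -1+c$ (from upper semicontinuity of $g$ and $g(z_0) \le -1$). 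The comparison principle then gives $(1-c)g \le u$ on $D \setminus K_c$, hence $g \le u/(1-c) \le w$. Taking the pointwise supremum over $g$ and using that $w$ is upper semicontinuous gives $u^*_{K_c, D} \le w$.

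For the capacity identity, I rewrite $(1-c)u^*_{K_c, D} = \max\{u, -(1-c)\} =: \tilde w$. Since $u \to 0$ at $\partial D$ and $-(1-c) < 0$, the set $\{u \le -(1-c)\}$ is compactly contained in $D$, so $\tilde w$ and $u$ agree on a neighbourhood of $\partial D$ and the difference $\tilde w - u$ has compact support in $D$. A standard Bedford--Taylor integration by parts (using that $\sum_{j=0}^{n-1}(dd^c \tilde w)^j \wedge (dd^c u)^{n-1-j}$ is a closed positive current) then yields $\int_D (dd^c \tilde w)^n = \int_D (dd^c u)^n = C(K,D)$. Since $(dd^c \tilde w)^n = (1-c)^n (dd^c u^*_{K_c, D})^n$, this gives $C(K_c, D) = C(K, D)/(1-c)^n$.

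The main technical subtlety lies in the first inequality, where $w$ is only admissible after removing the pluripolar set $N$ from $K_c$, forcing one to invoke the pluripolar-invariance of $u^*_{E, D}$; the capacity step, though routine in appearance, likewise depends on the non-trivial Bedford--Taylor integration-by-parts formula for bounded plurisubharmonic functions.
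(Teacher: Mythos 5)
Your proof is correct and follows essentially the same route as the paper's: the easy inequality via the Perron family (you handle the pluripolar discrepancy between $u_{K,D}$ and $u^*_{K,D}$ by invoking pluripolar-invariance of $u^*_{E,D}$, where the paper instead patches it at the end using that two psh functions equal quasi-everywhere are equal), and the hard inequality via the domination/comparison principle on $D\setminus K_c$, which is exactly the maximality argument the paper uses. The only substantive addition is your explicit mass-conservation derivation of $C(K_c,D)=C(K,D)/(1-c)^n$ from $(1-c)u^*_{K_c,D}=\max\{u^*_{K,D},-(1-c)\}$, a step the paper's proof leaves implicit.
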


\begin{proof}
We start by observing that the 
relative extremal function $u_{K_c,D}$ for $K_c $ in $D$,  satisfies 
\begin{equation}
\label{eq:lem11:eq1}
 \max\{\frac{u_{K,D}}{1-c}, -1\} \le u_{K_c,D} \text{ on } D\,.
 \end{equation}
Indeed, any psh function $w$ appearing in the definition of 
$u_{K,D}$ satisfies $w \le u_{K,D}$ on $D$, so $w \le -1+c$ on $K_c$. 
Thus $\max\{\frac{w}{1-c}, -1\}$ is a negative psh function on $D$ which is less than or 
equal to $-1$ on $K_c$, from which, using the definition of $u_{K_c,D}$, 
we conclude that $\max\{\frac{w}{1-c}, -1\} \le u_{K_c,D}$ on $D$, 
and (\ref{eq:lem11:eq1}) follows.

Let $v_c$ denote the following psh function on $D$
\[ v_c=\max\{\frac{u^*_{K,D}}{1-c}, -1\}\,.\]
Now, $v_c$ is a negative psh function on $D$, which is greater or equal to $-1$ on  $D\setminus K_c$. Indeed, on $D\setminus K_c$, we have 
$u_{K,D} >-1+c$ and thus $u^*_{K,D} >-1+c$ as well. 
In addition, $v_c$  tends to $0$ as $z$ tends to $w$, 
for any $w  \in \partial D$. 

Since $(dd^c u^*_{K,D})^n=0$ on $D \setminus K$, 
we also have $(dd^c v_c)^n =0$ on $D\setminus K_c$. 
By the maximality of the function $v_c$, we deduce that 
$v_c \ge w$ in $D \setminus K_c$, for any psh function $w$ which appears in the 
definition of $u_{K_c,D}$. 
Thus $v_c \ge u_{K_c,D}$ and even $v_c \ge u^*_{K_c,D}$ on $D \setminus K_c$, 
since $v_c$ is psh.

Combining the bounds in the previous two paragraphs we deduce that
\[ v_c = u^*_{K_c,D} \mbox{ on } D \setminus K_c\,.\]
Summarising, we have 
\[ \max\{\frac{u^*_{K,D}}{1-c}, -1\} = u^*_{K_c,D} \text{ on } D \setminus K_c \]
and 
\[ \max\{\frac{u_{K,D}}{1-c}, -1\} = u_{K_c,D}=-1 \text{ on } K_c\,.\]
We know that $u_{K,D} = u^*_{K,D}$ and $u_{K_c,D} = u^*_{K_c,D}$ except for a 
pluripolar set in $D$. As two psh functions which are equal except for a pluripolar set 
are in fact equal everywhere, we finally deduce that 
\[ \max\{\frac{u^*_{K,D}}{1-c}, -1\} = u^*_{K_c,D} \text{ on } D\,,\]
and the proof is complete.
\end{proof}

We are finally able to prove the main result of this section, the first half of our main theorem. 
\begin{Theorem}
\label{theor12}
Let $D$ be a strictly hyperconvex domain in $\mathbb{C}^n$ and $K$ a non-pluripolar compact subset of $D$. Then 
\[ \limsup_{m\to \infty} \frac{-\log d_m(H^\infty(D)\hookrightarrow A(K))}{m^{1/n}} 
\leq 2\pi \left ( \frac{n!}
{C(K,D)} \right )^{1/n} \,.\] 
\end{Theorem}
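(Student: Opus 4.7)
The plan is to apply Theorem~\ref{theor11} to the compact sets $K_c$ introduced in (\ref{eq:Kcdef}), whose relative capacities are related to $C(K,D)$ by Lemma~\ref{lem:11}, and to transfer the resulting bound on Kolmogorov numbers back to $K$ by means of a two-constants type inequality. For any $c\in(0,1)$, the discussion preceding Lemma~\ref{lem:11} shows that $K_c$ is a compact subset of the strictly hyperconvex domain $D$ with positive Lebesgue measure, so Theorem~\ref{theor11} applies to $(D,K_c)$; combined with $C(K_c,D)=C(K,D)/(1-c)^n$ this gives
\[ \limsup_{m\to\infty}\frac{-\log d_m(H^\infty(D)\hookrightarrow A(K_c))}{m^{1/n}} \le 2\pi(1-c)\left(\frac{n!}{C(K,D)}\right)^{1/n}. \]

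The crucial new ingredient is the two-constants inequality: for every $f\in H^\infty(D)$, $\|f\|_{K_c}\le \|f\|_K^{1-c}\,\|f\|_D^c$. To prove it, assume $\|f\|_D=1$ and $\delta:=\|f\|_K\in(0,1)$; then $v:=\log|f|/(-\log\delta)$ is psh on $D$ with $v\le 0$ on $D$ and $v\le -1$ on $K$, so $v\le u_{K,D}$ pointwise on $D$ by the very definition of $u_{K,D}$ as a supremum. At every point $z\in K_c$ we therefore have $\log|f(z)|\le (1-c)\log\delta$, i.e.~$|f(z)|\le\delta^{1-c}$; the cases $\delta=0$ (in which $f\equiv 0$ by unique continuation since $K$ is non-pluripolar) and $\delta=1$ are trivial. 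Translated to Gelfand numbers by taking the supremum of $\|f\|_{K_c}\le\|f\|_K^{1-c}$ over the unit ball of any closed subspace $L\subseteq H^\infty(D)$ with $\operatorname{codim}L<m$ and then the infimum over $L$, this becomes
\[ c_m\bigl(H^\infty(D)\hookrightarrow A(K_c)\bigr)\le c_m\bigl(H^\infty(D)\hookrightarrow A(K)\bigr)^{1-c}\qquad (\forall m\in\bbN). \]

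By (\ref{eq:dmam}) and (\ref{eq:cmam}), $|\log d_m-\log c_m|\le\frac{1}{2}\log(2m)=o(m^{1/n})$, so the Kolmogorov and Gelfand numbers have the same exponential asymptotics. Combining the two displays then yields
\[ \limsup_{m\to\infty}\frac{-\log d_m(H^\infty(D)\hookrightarrow A(K))}{m^{1/n}} \le \frac{1}{1-c}\cdot 2\pi(1-c)\left(\frac{n!}{C(K,D)}\right)^{1/n}=2\pi\left(\frac{n!}{C(K,D)}\right)^{1/n}, \]
the factor $1-c$ cancelling exactly so that no limit $c\to 0$ is even necessary. The main obstacle I expect is the two-constants inequality, and in particular verifying it at every point of $K_c$: this works precisely because $K_c$ is defined via $u_{K,D}$ rather than its upper regularisation, so that the psh competitor $v$ is bounded pointwise (not merely quasi-everywhere). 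Passing through Gelfand rather than Kolmogorov numbers is also essential, because the two-constants estimate is sub-multiplicative in a manner adapted to $c_m$ but not directly to $d_m$.
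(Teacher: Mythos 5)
Your proposal is correct and follows essentially the same route as the paper: approximate via the level sets $K_c$ of $u_{K,D}$, use the two-constants inequality (which the paper quotes from Klimek's book but you prove directly from the definition of $u_{K,D}$) to get $c_m(J_c)\le c_m(J)^{1-c}$, pass between Gelfand and Kolmogorov numbers via (\ref{eq:dmam})--(\ref{eq:cmam}), and invoke Theorem~\ref{theor11} together with the exact cancellation $C(K_c,D)=C(K,D)/(1-c)^n$ from Lemma~\ref{lem:11}. The only cosmetic difference is your self-contained verification of the two-constants estimate at every point of $K_c$, which matches the paper's appeal to the Two Constants Theorem.
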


\begin{proof}
Fix $c$ with $0<c<1$. Let $K_c$ be as in (\ref{eq:Kcdef}) and define the canonical 
embeddings
\[ J:H^\infty(D)\hookrightarrow A(K)\,, \]
\[ J_c:H^\infty(D)\hookrightarrow A(K_c)\,. \]
We start by observing that the Two Constants Theorem (see, for example, 
\cite[Proposition~4.5.6]{K2}) implies that for any $f\in H^\infty(D)$ with 
$\norm{f}_{H^\infty(D)}\leq 1$ we have 
\[ \norm{f}_{A(K_c)}\leq 
\norm{f}_{A(K)}^{1-c}
\,,\]
which implies the following relation between the Gelfand numbers of the embeddings 
$J$ and $J_c$
\[ c_m(J_c)\leq c_m(J)^{1-c} \quad (m \in \mathbb{N})\,.\]
Thus 
\[ 
\limsup_{m\to \infty} \frac{-\log c_m(J)}{m^{1/n}} \leq 
\limsup_{m\to \infty} \frac{1}{1-c}\frac{-\log c_m(J_c)}{m^{1/n}}\,,
\]
and so, using (\ref{eq:dmam}) and (\ref{eq:cmam}), it follows that 
\[ 
\limsup_{m\to \infty} \frac{-\log d_m(J)}{m^{1/n}} \leq 
\limsup_{m\to \infty} \frac{1}{1-c}\frac{-\log d_m(J_c)}{m^{1/n}}\,. 
\]
Now, since $K_c$ has positive Lebesgue measure, Theorem~\ref{theor11} implies 
\[ 
\limsup_{m\to \infty} \frac{1}{1-c}\frac{-\log d_m(J_c)}{m^{1/n}}
\leq \frac{2\pi }{1-c} \left ( \frac{n!}{C(K_c,D)}\right )^{1/n}\,, 
\]
while Lemma~\ref{lem:11} gives 
\[ \frac{2\pi }{1-c} \left ( \frac{n!}{C(K_c,D)}\right )^{1/n}
= 2\pi \left ( \frac{n!}{C(K,D)}\right )^{1/n}\,. \] 
Thus, all in all, we have 
\[ \limsup_{m\to \infty} \frac{-\log d_m(J)}{m^{1/n}} 
\leq 2\pi \left ( \frac{n!}
{C(K,D)} \right )^{1/n} \,,\]
and the proof is finished.  
\end{proof}

\section{Upper bound for the Kolmogorov widths}
\label{sec:3}

Our strategy for determining sharp upper bounds for the Kolmogorov widths  
is based on the Bergman-Weil formula coupled 
with an approximation argument. More precisely, 
we shall start by letting $K$ and $D$ be special holomorphic polyhedra in $\mathbb{C}^n$ (a notion we 
shall recall below), 
in which case sharp upper bounds for the Kolmogorov numbers
$d_n(H^\infty(D)\hookrightarrow A(K))$ 
can be obtained using the Bergman-Weil integral formula. 
Next, for any pair $(K,D)$, where $K$ is a regular compact subset of a strictly 
hyperconvex domain $D$, we will simultaneously approximate $\hat{K}_D$ externally 
and $D$ internally by two special holomorphic polyhedra defined by the same holomorphic 
mapping in such a way that 
the relative capacity of the approximations will converge to the relative capacity $C(K,D)$. 
This will allow us to deduce a sharp upper bound for 
the Kolmogorov widths in the case where 
$K$ is a regular compact subset in 
a strictly hyperconvex domain $D$. From this, we will deduce an upper bound for the 
Kolmogorov widths of a general pair $(K,D)$, also termed a \emph{condenser} in this section, 
where $K$ is any compact subset of a bounded 
and hyperconvex domain $D$ in $\mathbb{C}^n$.

\subsection{Upper bounds in case $K$ and $D$ are special holomorphic polyhedra}

We start by recalling the notions of holomorphic polyhedron and special 
holomorphic polyhedron. 
Let $\Omega$ be an open subset of $\mathbb{C}^n$ and $N$ a positive integer. 
A \emph{holomorphic polyhedron} of type $N$ in $\Omega$ 
is a finite union of relatively compact connected
components of the subset of $\Omega$ of the form
\[ \all{z \in \Omega}{|f_j(z)| < 1 \mbox{ for  all } j\in \{1,\ldots ,N\}}\,,\]
where each $f_j:\Omega\to \mathbb{C}$ is holomorphic.

Clearly, a holomorphic polyhedron of type $N$ is also of type $N+1$, so the minimal
type of a given polyhedron is of particular interest. Note that if 
$\Omega$ is a holomorphically 
convex domain in $\mathbb{C}^n$ and $\mathcal{P}$ a holomorphic
polyhedron of type $N$ in $\Omega$ then $N \ge n$.
Thus, in this case there is a nontrivial lower bound for the type of a holomorphic 
polyhedron,
and polyhedra of this minimal type play a special role: 
 a 
holomorphic polyhedron of type $n$ in a holomorphically convex domain $\Omega$ in 
$\mathbb{C}^n$ is called a {\it special holomorphic polyhedron} (see Bishop \cite{Bi}).

For the rest of this subsection we shall focus on condensers of the 
form $(K,D)=(\clo{{\cal U}_b}, {\cal U}_a)$, where both ${\cal U}_a$
and ${\cal U}_b$ are special holomorphic polyhedra  obtained as follows. 

Let $F= (f_1,$ $\ldots, f_n) :\Omega \to \bbC^n$ denote a holomorphic mapping on a 
pseudoconvex open set $\Omega$ in $\bbC^n$. 
For $a\in (0,\infty)^n$ write 
\[ {\cal P}_a=\all{z\in \Omega}{|f_k(z)| <a_k \mbox{ for all } k\in \{1,\ldots,n\}}, \]
and assume that the open set ${\cal P}_a$ has relatively compact connected components. 
We now define the special holomorphic polyhedron ${\cal U}_a$ to be 
a finite union of such components. 

Given $b\in (0,\infty)^n$ with $b_k< a_k$ for $k\in \{1, \ldots, n\}$ we define a second special 
holomorphic polyhedron ${\cal U}_b$ as the finite union of 
connected components of the open set 
\[ {\cal P}_b=\all{ z\in \Omega}{ | f_k(z)| <b_k \mbox{ for all } k\in \{1,\ldots ,n\}},\] 
which belong to ${\cal U}_a$. 

In this case we shall call the condenser $(\clo{{\cal U}_b}, {\cal U}_a)$ a 
\emph{special holomorphic polyhedral condenser} and $F$ the  
\emph{underlying mapping}. 

Note that the underlying  mapping $F$ is finite and proper from ${\cal U}_a$ onto the polydisc 
$P(O,a):=\all{z \in \bbC^n}{ |z_j | < a_j, \, 1\le j \le n}$.  Moreover, $F|_{{\cal U}_a}$ is an 
unramified covering over the open set $P(O,a)$. We shall refer to 
the number of sheets of this covering as the \emph{multiplicity} of $F$. 

It turns out that for special holomorphic polyhedral condensers there is a simple formula for the 
corresponding relative capacity.

\begin{Proposition} \label{theor30}
Let $(\clo{{\cal U}_b},{\cal U}_a)$ be a special holomorphic polyhedral condenser 
with underlying mapping $F=(f_1, \ldots, f_n)$ in $\mathbb{C}^n$. 
Then the corresponding relative extremal function is given explicitly by 
\[ u_{\clo{{\cal U}_b},{\cal U}_a}(z)=
u_{\clo{P(O,b)}, P(O,a)}(F(z)) = \sup_{1\le k \le n} \frac{\log (| f_k(z) |/a_k)}{\log(a_k / b_k)}\,, \]
and its relative capacity is 
\[ C(\clo{{\cal U}_b},{\cal U}_a)
= \frac{(2\pi)^n m_0}{\prod_{k=1}^{n} \log(a_k/b_k)}\,,\]
where $m_0$ is the multiplicity of $F$. 
\end{Proposition}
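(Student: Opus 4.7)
The plan is to reduce the computation to the model polydisc condenser $(\clo{P(O,b)}, P(O,a))$ by transport through the finite unramified covering $F\colon {\cal U}_a\to P(O,a)$. Since $F$ is a local biholomorphism on all of ${\cal U}_a$, pull-back through $F$ preserves plurisubharmonicity, boundary values, and maximality of the relative extremal function, and the total Monge--Amp\`ere mass is multiplied by the sheet count $m_0$.

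For the model condenser $(\clo{P(O,b)}, P(O,a))$, I would verify directly that
\[ v(w) = \max\bigl\{-1,\ \max_{1\le k\le n}\tfrac{\log(\abs{w_k}/a_k)}{\log(a_k/b_k)}\bigr\} \]
is psh on $P(O,a)$ (a maximum of subharmonic functions of single variables), lies in $[-1,0]$, equals $-1$ on $\clo{P(O,b)}$, tends to $0$ at $\partial P(O,a)$, and is maximal on $P(O,a)\setminus\clo{P(O,b)}$ (each inner argument is harmonic in its own variable away from the circle $\abs{w_k}=b_k$, so $(dd^c v)^n$ vanishes there). Uniqueness for the Dirichlet problem for the complex Monge--Amp\`ere operator on the hyperconvex domain $P(O,a)$ then identifies $v$ with $u^*_{\clo{P(O,b)},P(O,a)}$, and a direct calculation shows that $(dd^c v)^n$ is a product measure on the distinguished boundary $\prod_{k=1}^n\{\abs{w_k}=b_k\}$ with total mass $\prod_{k=1}^n 2\pi/\log(a_k/b_k)$, giving $C(\clo{P(O,b)},P(O,a)) = (2\pi)^n/\prod_{k=1}^n\log(a_k/b_k)$.

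Next I would set $\tilde u(z) = v(F(z))$ on ${\cal U}_a$. Since $F$ is holomorphic, $\tilde u$ is psh. Since $F$ is proper from ${\cal U}_a$ onto $P(O,a)$ with $F(\clo{{\cal U}_b}) \subset \clo{P(O,b)}$, we have $-1 \le \tilde u \le 0$, $\tilde u \equiv -1$ on $\clo{{\cal U}_b}$, and $\tilde u(z) \to 0$ as $z \to \partial {\cal U}_a$. Because $F$ is unramified everywhere on ${\cal U}_a$, it is a local biholomorphism, so $\tilde u$ inherits maximality from $v$ on ${\cal U}_a \setminus \clo{{\cal U}_b}$. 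Uniqueness of the extremal function then forces $\tilde u = u^*_{\clo{{\cal U}_b},{\cal U}_a}$, establishing the first assertion of the proposition.

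Finally, for the capacity, since $F$ is an $m_0$-sheeted unramified covering of $P(O,a)$ and a local biholomorphism, the change of variables for Bedford--Taylor Monge--Amp\`ere measures pulled back through $F$ yields
\[ C(\clo{{\cal U}_b},{\cal U}_a) = \int_{{\cal U}_a}(dd^c \tilde u)^n = m_0 \int_{P(O,a)}(dd^c v)^n = \frac{m_0 (2\pi)^n}{\prod_{k=1}^n \log(a_k/b_k)}, \]
the factor $m_0$ arising from summation over the fibres of $F$. The main obstacle is justifying this change of variables: $v$ is only continuous and $(dd^c v)^n$ is a singular measure, so one cannot apply the smooth change of variables directly; however, because $F$ is a local biholomorphism on the whole of ${\cal U}_a$, one can work in small coordinate charts where $F$ is a biholomorphism and appeal to continuity of the Monge--Amp\`ere operator along decreasing sequences of smooth psh approximants (obtained by convolution in these charts) to transfer the identity from the smooth to the singular setting.
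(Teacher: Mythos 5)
Your proposal is correct and follows essentially the same route as the paper: reduce to the model polydisc condenser $(\clo{P(O,b)},P(O,a))$, where the extremal function and capacity are computed explicitly, and transport both through the proper $m_0$-sheeted covering $F$. The paper's proof is just a citation of Proposition~4.5.14 in \cite{K2} and Lemma~4.1 in \cite{N4} for precisely these two transport facts, which you instead establish directly (via the unramifiedness of $F$ and invariance of the Bedford--Taylor operator under local biholomorphisms).
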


\begin{proof}
This result is an easy consequence of 
Proposition $4.5.14$ in \cite{K2} and Lemma $4.1$ in \cite{N4},
since the holomorphic mapping $F$ is proper and surjective from 
${\cal U}_a$ onto the open polydisc $P(O,a)$, and from 
${\cal U}_b$ onto the open polydisc $P(O,b)$, respectively.
\end{proof}

\subsubsection{A sequence of finite rank operators $(J_m)$ approximating $J$}
Let us recall that in order to find upper bounds for the Kolmogorov widths 
$d_m({\cal A}_K^D)$, it suffices to provide upper bounds for the approximation numbers 
$a_m(J)$ of the canonical identification
\[ \begin{array}{lcll}
J : & H^\infty(D) & \to  &A(K)\\
& f & \mapsto  &Jf=f|_{K}\,. 
\end{array} \]
As detailed in Subsection~\ref{sec:1.3}, this follows since 
\[ d_m({\cal A}_K^D)=d_m(J)\leq a_m(J) \quad (\forall m\in \mathbb{N})\,.\]
Specialising to the case where $(K,D)= (\clo{{\cal U}_b}, {\cal U}_a)$ is a special holomorphic 
polyhedral condenser we shall now construct  a sequence of finite rank operators 
$J_m : H^\infty(D)\to A(K)$ which approximate $J$ at a certain stretched exponential speed. 
The main tool will be the Bergman-Weil 
integral formula originally due to Weil \cite{Weil} and Bergman \cite{Bergman},  
which we turn to shortly.

Before doing so, we briefly recall multi-index notation. For 
$z\in \bbC^n$ and $\nu \in \bbZ^n$ with $z=(z_1,\ldots, z_n)$ and $\nu=(\nu_1,\ldots, \nu_n)$ we 
write  $z^\nu=\prod_{k=1}^n z_k^{\nu_k}\,.$ We use the symbol ${\cal I}$ to mean ${\cal I}=(1, \ldots, 1)$, 
so that $z^{\cal I}=\prod_{k=1}^n z_k\,.$

Suppose now that $F= (f_1,$ $\ldots, f_n)$ is the underlying mapping of the 
special holomorphic polyhedral condenser $(\clo{{\cal U}_b}, {\cal U}_a)$, that is, 
$F:\Omega \to \mathbb{C}^n$ is a 
holomorphic mapping on a pseudoconvex open set $\Omega$ in $\bbC^n$ containing 
${\cal U}_a$ and $\clo{{\cal U}_b}$. It turns out that there is a 
holomorphic function $G:\Omega \times \Omega \to \bbC^{n \times n}$ such that 
\[ 
F(\zeta)-F(z) = G(\zeta,z)(\zeta-z) \quad (\forall \zeta, \, z \in \Omega)\,.
\]
The existence of $G$ for pseudoconvex $\Omega$ is a non-trivial matter and is originally 
due to Hefer~\cite{Hefer} (see also \cite{Sha}, Paragraphs $30$ and $50$). 

If $\partial^*{\cal U}_a$ 
denotes the distinguished boundary of ${\cal U}_a$, that is 
\[ \partial^*{\cal U}_a= \all{z \in \Omega}{| f_k(z) | =a_k \mbox{ for all } k \in \{1,\ldots ,n\}}\,,
\]
considered as an $n$-dimensional surface with a suitable orientation, 
then the Bergman-Weil integral formula on ${\cal U}_a$ can be stated as follows (see 
\cite[Paragraph~30]{Sha}): for any $g\in H^\infty({\cal U}_a)$ and any 
$z\in {\cal U}_a$ we have 
\begin{equation*}
\label{eq:bergmanweilf}
g(z)=\frac{1}{(2\pi i)^n}\int_{\partial^*{\cal U}_a}^* g(\zeta) \frac{\det(G(\zeta,z))}{(F(\zeta)-
F(z))^{\cal I}}\,d\zeta\,,
\end{equation*}
where $d\zeta$ is the $n$-form $d\zeta_1\land \ldots\land d\zeta_n$. Here, 
the star at the top of the integral sign indicates that integration is to be taken over 
any $\partial^*{\cal U}_{a'}$ where 
$a' \in (0,\infty)^n$ with $a'_k < a_k$ for any $k$, and $a'$ is chosen such that $z \in {\cal U}_{a'}$. 
It is not difficult to see that the integral 
$\int_{\partial^*{\cal U}_a}^*$ does not depend on this choice. Note that in 
the case where 
$g\in A(\overline{{\cal U}_a})$ then $\int_{\partial^*{\cal U}_a}^*$ is in fact the classical integration 
over $\partial^*{\cal U}_a$.

An important property of the Bergman-Weil integral representation is that 
its kernel is holomorphic in $z$. This implies that we can write the canonical identification 
$J:H^\infty({\cal U}_a)\to A(\clo{{\cal U}_b})$ as an infinite series of operators, all of which, as we shall see later, are finite rank. More 
precisely, the following holds. For $g \in H^\infty({\cal U}_a)$ and $z\in 
\overline{{\cal U}_b}$, we have
\begin{align}
(Jg)(z)=g(z)
& = \frac{1}{(2\pi i)^n}\int_{\partial^*{\cal U}_a}^* g(\zeta) 
\frac{\det(G(\zeta,z))}{(F(\zeta)-F(z))^{\cal I}}\,d\zeta \nonumber \\
& = \frac{1}{(2\pi i)^n}\int_{\partial^*{\cal U}_a}^* g(\zeta)\frac{\det(G(\zeta,z))}{F(\zeta)^{\cal I}}  
\sum_{l=1}^\infty \frac{F(z)^{\nu(l)}}{F(\zeta)^{\nu(l)}}\,d\zeta \nonumber \\
& =\sum_{l=1}^\infty F(z)^{\nu(l)} \frac{1}{(2\pi i)^n}\int_{\partial^*{\cal U}_a}^* g(\zeta)
\frac{\det(G(\zeta,z))}{F(\zeta)^{{\cal I}+ \nu(l)}} \,d\zeta\,,
\label{eq:Jintrep}
\end{align}
where $\nu:\mathbb{N}\to \mathbb{N}_0^n$ could, in principle, be any bijection, but we shall fix 
it so 
as to facilitate obtaining effective bounds for the approximation numbers of 
$J:H^\infty({\cal U}_a)\to A(\clo{{\cal U}_b})$. 

In order to achieve this, let 
$\alpha \in (0,1)^n$ be given by $\alpha_k=b_k/a_k$ for 
$k\in \{1,\ldots, n\}$. Now choose $\nu:\mathbb{N}\to \mathbb{N}_0^n$ so that 
$m\mapsto \gamma_m:=\alpha^{\nu(m)}$ is monotonically decreasing. In other words, the bijection 
$\nu$ is chosen to provide a non-increasing rearrangement $(\gamma_m)_{m\in \bbN}$ 
of the set $\{\alpha^\nu : \nu \in \bbN^n_0\}$. 

As a first step towards bounding the approximation numbers $a_m(J)$ 
we need to bound the rate of decay of $(\gamma_m)_{m\in \bbN}$. For this we require 
the 
following auxiliary result, the short proof of which is adapted from the proof of 
\cite[Lemma~2.4]{BMV}. 

\begin{Lemma} \label{lem30}
Let  $\beta \in (0,\infty)^n$ and let $N_\beta:[0,\infty)\to \bbN_0$ denote the counting function  
\[ N_\beta(r)=\sharp \alll{\nu \in \bbN^n_0}{\sum_{k=1}^n \nu_k\beta_k\leq r}\,.
\]
 Then 
\begin{equation}
\label{eq10}
\frac{1}{n!}\frac{r^n}{\prod_{k=1}^n\beta_k} \leq N_\beta(r) 
\leq \frac{1}{n!}\frac{\left (r +\sum_{k=1}^n\beta_k\right )^n}{\prod_{k=1}^n\beta_k} 
\quad (\forall r\geq 0)\,. 
\end{equation}
\end{Lemma}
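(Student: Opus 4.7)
The plan is to give a standard volume-comparison argument: to each lattice point $\nu\in\bbN_0^n$ appearing in the count, associate the closed box
$B_\nu = \prod_{k=1}^n [\nu_k\beta_k,(\nu_k+1)\beta_k]\subset [0,\infty)^n$, which has $2n$-dimensional Lebesgue volume equal to $\prod_{k=1}^n\beta_k$, and whose interiors are pairwise disjoint as $\nu$ varies. Writing $\Sigma_s=\all{x\in[0,\infty)^n}{\sum_{k=1}^n x_k\le s}$, which is a simplex of volume $s^n/n!$, the two inequalities in (\ref{eq10}) will follow from sandwiching $\bigcup_{\nu} B_\nu$ between two such simplices.

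For the lower bound, I would take any $x\in\Sigma_r$ and set $\nu_k=\floor{x_k/\beta_k}$. Then $\nu_k\beta_k\le x_k$, hence $\sum_k\nu_k\beta_k\le\sum_k x_k\le r$, so $\nu$ contributes to $N_\beta(r)$ and $x\in B_\nu$. This shows $\Sigma_r\subset\bigcup\{B_\nu:\sum_k\nu_k\beta_k\le r\}$, and comparing volumes gives $r^n/n!\le N_\beta(r)\prod_k\beta_k$.

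For the upper bound, if $\nu$ satisfies $\sum_k\nu_k\beta_k\le r$ and $x\in B_\nu$, then $x_k\le(\nu_k+1)\beta_k$, whence $\sum_k x_k\le\sum_k\nu_k\beta_k+\sum_k\beta_k\le r+\sum_k\beta_k$. Therefore $\bigcup\{B_\nu:\sum_k\nu_k\beta_k\le r\}\subset\Sigma_{r+\sum_k\beta_k}$, and the interior-disjointness of the boxes yields $N_\beta(r)\prod_k\beta_k\le(r+\sum_k\beta_k)^n/n!$.

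There is no real obstacle here: the argument is elementary Lebesgue measure, and the only minor bookkeeping is verifying that $\nu_k=\floor{x_k/\beta_k}\in\bbN_0$ (immediate from $x_k\ge 0$) and that the inequality $\sum_k\nu_k\beta_k\le\sum_k x_k$ carries through with the right sign in each step. The whole proof should fit in a few lines.
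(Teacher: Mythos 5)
Your proof is correct and is essentially the paper's argument: the paper sandwiches the union of unit cubes $\prod_k[\nu_k,\nu_k+1]$ between the simplices $\{\xi:\sum_k\xi_k\beta_k\le s\}$, which is exactly your construction after the rescaling $x_k=\xi_k\beta_k$, and you have merely written out the two inclusions the paper leaves to the reader. (One trivial slip: the boxes $B_\nu$ sit in $\mathbb{R}^n$, so their Lebesgue volume is $n$-dimensional, not $2n$-dimensional.)
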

\begin{proof}
Fix $r\geq 0$ and define the following sets 
\[
\mathcal{N}_\beta(r)=
\alll{\nu \in \bbN^n_0}{\sum_{k=1}^n \nu_k\beta_k\leq r}\,,
\]
\[
\mathcal{S}_\beta(r)= 
\alll{\xi\in [0,\infty)^n}{\sum_{k=1}^n \xi_k\beta_k\leq r}\,,
\]
\[ \mathcal{C}_\nu = \all{\xi\in \mathbb{R}^n}{\nu_k\leq \xi_k \leq \nu_k+1\ \forall k\in 
\set{1,\ldots,n}  } \quad (\nu \in \mathbb{N}_0^n)\,,
\]
\[ \mathcal{M}_\beta(r)= \bigcup_{\nu \in \mathcal{N}_\beta(r)} \mathcal{C}_\nu\,.
\]
It is not difficult to see that we have the following inclusions
\[ \mathcal{S}_\beta (r) \subset \mathcal{M}_\beta(r) \subset 
\mathcal{S}_\beta (r+\sum_{k=1}^n\beta_k)\,,\]
from which the inequalities~(\ref{eq10}) readily follow, by computing the volume of the respective sets.  
 \end{proof}

\begin{Remark}
The lemma above implies that 
\[ N_\beta(r) \sim \frac{1}{n!}\frac{r^n}{\prod_{k=1}^n\beta_k} \text{ as $r\to \infty$.} \]
This asymptotic also follows easily from Karamata's Tauberian theorem (see, for example, 
Lemma~6.1 in \cite{LQR}).  
The lemma above, however, provides completely explicit bounds valid for all $r\geq 0$.  
\end{Remark}

We now have the following upper bound for $(\gamma_n)_{n\in \bbN}$. 

\begin{Lemma} \label{lem31}
Let $\alpha\in (0,\infty)^n$ and let $(\gamma_m)_{m\in \bbN}$ 
denote a non-increasing rearrangement of the set 
$\all{\alpha^\nu}{\nu \in \bbN^n_0}$. 
Writing 
\[ c=n ! \prod_{k=1}^n \log \alpha_k^{-1}
\] 
we have 
\begin{align*}
\gamma_m & \leq \frac{1}{\prod_{k=1}^n \alpha_k} \exp ( -(cm)^{1/n} )\quad (\forall \, m\in \bbN)\,, \\
\sum_{l=m+1}^\infty \gamma_l &     \leq \frac{1}{\prod_{k=1}^n \alpha_k \log \alpha_k^{-1}} 
\sum_{k=0}^{n-1} \frac{(c m)^{k/n}}{k!} \exp( -(cm)^{1/n})  \quad (\forall \, m\in \bbN_0)\,.
\end{align*}      
\end{Lemma}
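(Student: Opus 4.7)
The plan is to reduce both bounds to the counting function $N_\beta$ studied in Lemma~\ref{lem30}, via the substitution $\beta_k=\log\alpha_k^{-1}$. Writing $\alpha^\nu=\exp(-\sum_k\nu_k\beta_k)$, one has the identity
\[ N_\beta(r)=\sharp\all{\nu\in\bbN_0^n}{\alpha^\nu\geq e^{-r}}\,, \]
and since $(\gamma_m)_{m\in\bbN}$ is the non-increasing rearrangement of $\{\alpha^\nu:\nu\in\bbN_0^n\}$, the key equivalence
\[ \gamma_m<e^{-r}\iff N_\beta(r)<m \]
holds. Note also that $c=n!\prod_k\beta_k$ in this notation.

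For part~(i), I will apply the upper bound
\[ N_\beta(r)\leq \frac{(r+\textstyle\sum_k\beta_k)^n}{n!\prod_k\beta_k} \]
from Lemma~\ref{lem30}. If $r+\sum_k\beta_k<(cm)^{1/n}$, then the right-hand side is strictly less than $m$, so $\gamma_m<e^{-r}$. Letting $r$ approach $(cm)^{1/n}-\sum_k\beta_k$ from below (the bound being trivial if this quantity is non-positive, since $\gamma_m\leq 1/\prod_k\alpha_k$ automatically), and using $\exp(\sum_k\beta_k)=\prod_k\alpha_k^{-1}$, yields the first estimate.

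For part~(ii), I set $g(x)=\frac{1}{\prod_k\alpha_k}\exp(-(cx)^{1/n})$, a decreasing function on $[0,\infty)$. Part~(i) gives $\gamma_l\leq g(l)$, and monotonicity yields
\[ \sum_{l=m+1}^\infty\gamma_l\leq\sum_{l=m+1}^\infty g(l)\leq\int_m^\infty g(x)\,dx\,. \]
Substituting $u=(cx)^{1/n}$, so that $dx=(n/c)u^{n-1}du$, one obtains
\[ \int_m^\infty\exp(-(cx)^{1/n})\,dx=\frac{n}{c}\int_{(cm)^{1/n}}^\infty u^{n-1}e^{-u}\,du=\frac{n}{c}\,\Gamma(n,(cm)^{1/n})\,. \]
Applying the classical identity $\Gamma(n,a)=(n-1)!\,e^{-a}\sum_{k=0}^{n-1}a^k/k!$, and using $n!/c=1/\prod_k\log\alpha_k^{-1}$, the claimed closed-form bound for the tail sum drops out immediately after multiplication by $1/\prod_k\alpha_k$.

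Neither step presents a real obstacle: part~(i) is a direct application of the counting-function upper bound from Lemma~\ref{lem30} with careful tracking of the additive shift $\sum_k\beta_k$, and part~(ii) is a standard incomplete-Gamma calculation built on top of part~(i). The only delicate point is that the bound in part~(i) is to be applied only in the regime $(cm)^{1/n}>\sum_k\beta_k$, which is harmless since in the complementary regime the right-hand side already exceeds $1$ and the inequality is trivial.
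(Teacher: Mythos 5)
Your proposal is correct and follows essentially the same route as the paper: both reduce the first bound to the upper estimate for the counting function $N_\beta$ of Lemma~\ref{lem30} with $\beta_k=\log\alpha_k^{-1}$ (you phrase it contrapositively via $N_\beta(r)<m\Rightarrow\gamma_m<e^{-r}$, the paper directly via $m\le N_\beta(\log\gamma_m^{-1})$), and both obtain the tail bound by majorising the sum by $\int_m^\infty\exp(-(cx)^{1/n})\,dx$ and evaluating it exactly, your incomplete-Gamma identity being precisely the paper's explicit antiderivative. The only minor point worth tightening is the justification of the trivial regime $(cm)^{1/n}\le\sum_k\beta_k$: what one actually uses there is $\gamma_m\le 1$ together with $\frac{1}{\prod_k\alpha_k}e^{-(cm)^{1/n}}\ge 1$, rather than $\gamma_m\le 1/\prod_k\alpha_k$ alone.
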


\begin{proof}
We start by observing that 
\[ m \leq \sharp \{\nu \in \bbN_0^n : \alpha^\nu \geq \gamma_m\} 
= N_\beta(\log \gamma_m^{-1})\quad (\forall m \in \bbN)\,,
\]
where $N_\beta$ is the counting function from Lemma \ref{lem30} with 
$\beta\in (0,+\infty)^n$ given by 
$\beta_k=\log \alpha_k^{-1}$ for $1\leq k \leq n$. 
The first bound now follows from the upper bound in Lemma \ref{lem30}. 

For the second bound we use the first bound together with a majorisation 
of the sum by an integral to obtain 
\begin{align*}
\sum_{l=m+1}^\infty \gamma_l 
 & \leq \frac{1}{\prod_{k=1}^n\alpha_k}\sum_{l=m+1}^\infty \exp(-(cl)^{1/n}) \\
 & \leq \frac{1}{\prod_{k=1}^n\alpha_k}\int_{m}^\infty \exp(-(ct)^{1/n})\,dt \\
 & = \frac{1}{\prod_{k=1}^n\alpha_k}
    \left [  -\frac{n !}{c}\sum_{k=0}^{n -1} \frac{(ct)^{k/n}}{k!} \exp(-(ct)^{1/n} ) 
    \right ]_m^{\infty}\\
 & = \frac{1}{\prod_{k=1}^n\alpha_k\log \alpha_k^{-1}}
       \sum_{k=0}^{n -1} \frac{(cm)^{k/n}}{k!} \exp(-(cm)^{1/n} )
\end{align*}
and we are done.
\end{proof}

We are now ready to define the sequence of finite rank operators 
$J_m:H^\infty({\cal U}_a) \to A(\overline{{\cal U}_b})$ alluded to earlier. 
For $m \in \bbN$ and $\forall z\in \overline{{\cal U}_b}$ write 
\begin{align}
(J_m g)(z)
&= \sum_{l=1}^m F(z)^{\nu(l)} \frac{1}{(2\pi i)^n}\int_{\partial^*{\cal U}_a}^* g(\zeta)
\frac{\det(G(\zeta,z))}{F(\zeta)^{{\cal I}+ \nu(l)}} \,d\zeta \nonumber \\
&= \frac{1}{(2\pi i)^n}\int_{\partial^*{\cal U}_a}^* g(\zeta)\frac{\det(G(\zeta,z))}{F(\zeta)^{{\cal I}}}
\sum_{l=1}^m \frac{F(z)^{\nu(l)}}{F(\zeta)^{\nu(l)}} \,d\zeta \,.
\label{eq:Jmdef}
\end{align}
Clearly, $J_m$ is a well-defined operator from $H^\infty({\cal U}_a)$ 
to $A(\overline{{\cal U}_b})$. Moreover, we have the following upper bound for the rate at which the sequence $(J_m)_{m\in \mathbb{N}}$ approximates $J$. 

\begin{Lemma}
\label{lem:JminusJm}
Let $(\clo{{\cal U}_b}, {\cal U}_a)$ be a special holomorphic polyhedral condenser in 
$\mathbb{C}^n$ with underlying mapping $F$. Then 
\begin{equation*}
\| J-J_m \|_{H^\infty({\cal U}_a)\to A(\overline{{\cal U}_b})}\leq C 
\left( \sum_{k=0}^{n-1}\frac{(c m)^{k/n}}{k!}\right) \exp(-(c m)^{1/n}) 
\quad (\forall m\in \mathbb{N})\,,
\end{equation*}
where 
\[ c=n! \prod_{k=1}^n \log \alpha_k^{-1}\,, \]  
\begin{equation}
\label{eq:constantC}
C= [(2\pi)^n\prod_{k=1}^n \alpha_k \log \alpha_k^{-1}]^{-1} \sup_{z\in \overline{{\cal U}_b}} 
\int_{\partial^*({\cal U}_a)} \frac{| \det(G(\zeta,z)) |}
{ | F(\zeta)^{\cal I} |}\,| d\zeta |\,,
\end{equation}
and, as before, $\alpha_k=b_k/a_k$ for $1\leq k \leq n$.  
\end{Lemma}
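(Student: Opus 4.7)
The plan is to start by subtracting the defining formula (\ref{eq:Jmdef}) for $J_m$ from the Bergman--Weil series expansion (\ref{eq:Jintrep}) for $J$, which gives, for any $g\in H^\infty(\mathcal{U}_a)$ and any $z\in\clo{\mathcal{U}_b}$,
\[
((J-J_m)g)(z)=\frac{1}{(2\pi i)^n}\int_{\partial^*\mathcal{U}_a}^{*}g(\zeta)\,\frac{\det(G(\zeta,z))}{F(\zeta)^{\mathcal{I}}}\sum_{l=m+1}^{\infty}\frac{F(z)^{\nu(l)}}{F(\zeta)^{\nu(l)}}\,d\zeta.
\]
The starred integral is evaluated on $\partial^*\mathcal{U}_{a'}$ for some $a'\in(0,\infty)^n$ with $a'_k<a_k$ and $z\in\mathcal{U}_{a'}$; at the end all estimates will be uniform as $a'\to a$.

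Next I would estimate pointwise. On $\partial^*\mathcal{U}_{a'}$ we have $|f_k(\zeta)|=a'_k$, whereas on $\clo{\mathcal{U}_b}$ we have $|f_k(z)|\le b_k$, so that
\[
\left|\frac{F(z)^{\nu(l)}}{F(\zeta)^{\nu(l)}}\right|\le \prod_{k=1}^{n}\left(\frac{b_k}{a'_k}\right)^{\nu(l)_k}\xrightarrow{a'\to a}\alpha^{\nu(l)}=\gamma_l.
\]
Since $(\gamma_l)$ is summable by Lemma \ref{lem31}, the series on the right converges uniformly in $(\zeta,z)\in\partial^*\mathcal{U}_{a'}\times\clo{\mathcal{U}_b}$, and together with $|g(\zeta)|\le\|g\|_{H^\infty(\mathcal{U}_a)}$ this justifies taking the modulus inside the integral, interchanging sum and integral, and passing to the limit $a'\to a$ to obtain
\[
|((J-J_m)g)(z)|\le \frac{\|g\|_{H^\infty(\mathcal{U}_a)}}{(2\pi)^n}\left(\sum_{l=m+1}^{\infty}\gamma_l\right)\int_{\partial^*(\mathcal{U}_a)}\frac{|\det(G(\zeta,z))|}{|F(\zeta)^{\mathcal{I}}|}\,|d\zeta|,
\]
where the integrand is bounded because $|F(\zeta)^{\mathcal{I}}|=\prod_k a_k>0$ on the distinguished boundary.

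Finally, I would take the supremum over $z\in\clo{\mathcal{U}_b}$ of the integral, giving (together with the factor $[(2\pi)^n\prod_k\alpha_k\log\alpha_k^{-1}]^{-1}$ coming from the prefactor in Lemma \ref{lem31}) precisely the constant $C$ in (\ref{eq:constantC}), and then apply the tail estimate of Lemma \ref{lem31},
\[
\sum_{l=m+1}^{\infty}\gamma_l\le \frac{1}{\prod_{k=1}^n\alpha_k\log\alpha_k^{-1}}\sum_{k=0}^{n-1}\frac{(cm)^{k/n}}{k!}\exp(-(cm)^{1/n}),
\]
to obtain the claimed bound.

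The only genuinely delicate point is the interpretation of the starred integral and the interchange of sum and integral: one must ensure that $|\det G|/|F^{\mathcal{I}}|$ is integrable on $\partial^*\mathcal{U}_a$ with bound uniform in $z\in\clo{\mathcal{U}_b}$ (which is automatic from continuity of $G$ on $\Omega\times\Omega$ and the fact that $|F^{\mathcal{I}}|$ is a positive constant on $\partial^*\mathcal{U}_a$), and that the geometric-type tail controls uniform convergence on $\partial^*\mathcal{U}_{a'}$ so that the limit $a'\to a$ can be taken; the rest is bookkeeping of the constants.
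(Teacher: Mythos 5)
Your proposal is correct and follows essentially the same route as the paper's proof: subtract the truncation from the Bergman--Weil series, bound the tail by the geometric quantities $\gamma_l=\alpha^{\nu(l)}$ using $|f_k(\zeta)|=a_k'$ on $\partial^*\mathcal{U}_{a'}$ and $|f_k(z)|\le b_k$ on $\clo{\mathcal{U}_b}$, pass to the limit $a'\to a$, and invoke the tail estimate of Lemma~\ref{lem31}. Your explicit attention to the interchange of sum and integral and the uniformity in $z$ is a slightly more careful rendering of the same argument.
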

\begin{proof}
Fix $z\in \clo{{\cal U}_b}$. 
Recall that the integration $\int_{\partial^*{\cal U}_a}^*$ 
in (\ref{eq:Jmdef}) is in fact an integration 
over any $\partial^*{\cal U}_{a'}$ where 
$a' \in (0,\infty)^n$ with $b_k < a'_k < a_k$ for any $k$ is chosen so that 
$z\in {\cal U}_{a'}$. 
Thus, for $g\in H^\infty({\cal U}_a)$ with $\|g \|_{H^\infty({\cal U}_a)} \leq 1$, 
we have using the calculation in (\ref{eq:Jintrep})
\begin{multline*}
| (Jg-J_m g)(z) | \leq \\
\left( \frac{1}{(2\pi)^n} \int_{\partial^*({\cal U}_{a'})} \frac{| \det(G(\zeta,z)) |}{ | F(\zeta)^{\cal I} |}\, 
| d\zeta | \right)
\sum_{l=m+1}^\infty \left(\frac{b_1}{a_1'}\right)^{\nu_1(l)} \cdots 
\left(\frac{b_n}{a_n'}\right)^{\nu_n(l)}\, .
\end{multline*}
Since there exists $\delta >0$ such that $b_k < b_k + \delta \le a'_k < a_k$ for any $k$, 
the power series 
$\sum_{l=m+1}^\infty \left(\frac{b_1}{a_1'}\right)^{\nu_1(l)} 
\cdots \left(\frac{b_n}{a_n'}\right)^{\nu_n(l)}$ converges to $\sum_{l=m+1}^\infty \gamma_l$ 
when $a'$ tends to $a$.
Consequently we obtain
\[ | (Jg-J_m g)(z) | \leq \left(\frac{1}{(2\pi)^n} \int_{\partial^*({\cal U}_a)} 
\frac{| \det(G(\zeta,z)) |}{| F(\zeta)^{\cal I} |}
| d\zeta | \right) \sum_{l=m+1}^\infty\gamma_l 
\quad (\forall z\in \overline{{\cal U}_b})\,,
\]
where we have used the fact that 
\[ a'\mapsto \frac{1}{(2\pi)^n} \int_{\partial^*({\cal U}_{a'})} 
\frac{| \det(G(\zeta,z)) |}{ | F(\zeta)^{\cal I} |}\, | d\zeta | \]
is continuous at $a$. 
Thus using Lemma \ref{lem31} we have for any $m \in \bbN$
\begin{equation}
\| J-J_m \|_{H^\infty({\cal U}_a)\to A(\overline{{\cal U}_b})}\leq C 
\left( \sum_{k=0}^{n-1}\frac{(c m)^{k/n}}{k!}\right) \exp(-(c m)^{1/n}) \,,
\end{equation}
as claimed. 
\end{proof}

\subsubsection{Upper bound for the rank of $J_m$}
For $(\clo{{\cal U}_b}, {\cal U}_a)$ a special holomorphic polyhedral condenser with underlying 
mapping $F$ we want to use the previous Lemma~\ref{lem:JminusJm} to obtain an upper bound 
for the approximation numbers of $J:H^\infty({\cal U}_a) \to A(\clo{{\cal U}_b})$. For this we 
need to impose an extra assumption on $F$, and hence on the condenser: we shall call 
$(\clo{{\cal U}_b}, {\cal U}_a)$ \emph{non-degenerate} if $O$ is a regular value of the underlying 
proper mapping $F$ from ${\cal U}_a$ onto $P(O,a)$. Since $F$ has finite multiplicity $m_0$ this 
implies that $F|_{{\cal U}_a}$ has exactly $m_0$ distinct zeros. 

Note that this is no essential restriction for our purposes, since if $O$ is not a regular value of $F$, 
then we can replace $F$ by $F-c$, where $c$ is a regular value for $F$, which can be chosen 
arbitrarily small. 

\begin{Lemma}
\label{lem:rankJm}
Let $(\clo{{\cal U}_b}, {\cal U}_a)$ be a non-degenerate 
special holomorphic polyhedral condenser with underlying mapping $F$ 
and $J_m$ the operator defined in (\ref{eq:Jmdef}). Then 
\[ {\rm rank}(J_m)\leq m_0m \quad (\forall m\in \mathbb{N})\,, \]
where $m_0$ is the multiplicity of $F$. 
\end{Lemma}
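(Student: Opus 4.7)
The plan is to realise $J_m$ as a composition
\[ H^\infty(\mathcal{U}_a)\xrightarrow{A} \mathbb{C}^{m_0 m}\xrightarrow{B} A(\clo{\mathcal{U}_b}), \]
which immediately forces $\mathrm{rank}(J_m)\le m_0 m$. To produce the factorisation, I will rewrite the integral defining $J_m$ via multidimensional residues at the zeros of $F$ and show that the resulting $g$-dependence involves exactly $m_0 m$ Taylor coefficients of local lifts of $g$.

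First, I will use the non-degeneracy hypothesis: $F^{-1}(0)$ consists of exactly $m_0$ points $p_1,\ldots,p_{m_0}\in \mathcal{U}_a$, each with $\det DF(p_i)\neq 0$, so near each $p_i$ the map $F$ is a biholomorphism with some local inverse $\phi_i$. For fixed $z\in \clo{\mathcal{U}_b}$, the $\zeta$-integrand of
\[ T_l(g)(z):=\frac{1}{(2\pi i)^n}\int_{\partial^*\mathcal{U}_a}^* g(\zeta)\frac{\det G(\zeta,z)}{F(\zeta)^{\mathcal{I}+\nu(l)}}\,d\zeta \]
is holomorphic on $\mathcal{U}_a\setminus\{p_1,\ldots,p_{m_0}\}$. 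Changing variables $u=F(\zeta)$ near each $p_i$ and applying the iterated Cauchy integral formula in $u$, I obtain
\[ T_l(g)(z)=\sum_{i=1}^{m_0}\frac{1}{\nu(l)!}\partial_u^{\nu(l)}\bigl[g(\phi_i(u))\tilde G_i(u,z)\bigr]_{u=0}, \]
where $\tilde G_i(u,z):=\det G(\phi_i(u),z)/\det DF(\phi_i(u))$. Expanding by Leibniz then gives
\[ T_l(g)(z)=\sum_{i=1}^{m_0}\sum_{\mu+\mu'=\nu(l)}\frac{(\partial^\mu(g\circ \phi_i))(0)}{\mu!}\cdot\frac{(\partial^{\mu'}\tilde G_i)(0,z)}{\mu'!}. \]

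The key combinatorial step is to observe that the set $\{\nu(1),\ldots,\nu(m)\}$ is downward closed under componentwise $\le$. Indeed, since $\alpha_k=b_k/a_k\in(0,1)$ for each $k$, any strict inequality $\mu<\nu(l)$ forces $\alpha^\mu>\alpha^{\nu(l)}=\gamma_l$, placing $\mu$ at a strictly earlier position than $l$ in the non-increasing enumeration $(\gamma_k)$. Consequently, after multiplying the displayed expansion by $F(z)^{\nu(l)}$ and summing over $l\le m$, the only linear functionals of $g$ that appear are the $m_0 m$ evaluations
\[ \ell_{k,i}(g):=\tfrac{1}{\nu(k)!}(\partial^{\nu(k)}(g\circ \phi_i))(0),\qquad k\in\{1,\ldots,m\},\ i\in\{1,\ldots,m_0\}. \]
This exhibits $J_m$ as a composition through $\mathbb{C}^{m_0 m}$, yielding the claimed rank bound.

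The most delicate step will be justifying the residue identity for $T_l$, which rests on the simplicity of each $p_i$ (guaranteed by non-degeneracy via $\det DF(p_i)\neq 0$) and on the correct orientation of $\partial^*\mathcal{U}_a$ inherited from the proper, finite cover $F:\mathcal{U}_a\to P(O,a)$. Once that identity is in hand, the Leibniz expansion and the downward-closure observation reduce the rest to bookkeeping.
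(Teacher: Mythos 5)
Your argument is essentially the paper's: both localise the Weil integral at the $m_0$ zeros of $F$, change variables via the local inverses, expand by Leibniz/Cauchy, and use that $\{\nu(1),\dots,\nu(m)\}$ is downward closed under componentwise order (because each $\alpha_k<1$) to conclude that only the $m_0m$ functionals $g\mapsto \partial^{\nu(k)}(g\circ\phi_i)(0)$ occur, so $J_m$ factors through $\mathbb{C}^{m_0m}$. The one imprecision is that the integrand is singular on the union of the divisors $\{f_k=0\}$, not merely at the finitely many common zeros $p_i$, so the localisation to the small components around the $p_i$ rests on the standard homology property of the distinguished boundary (equivalently, the paper's device of first taking $z$ in a small neighbourhood of the zeros and then continuing analytically) rather than on holomorphy off a finite set; this does not affect the conclusion.
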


\begin{proof} 
Since the condenser is non-degenerate the underlying mapping $F$ has exactly $m_0$ 
distinct zeros, call them $z^{(1)},\ldots,z^{(m_0)}$. 
Next choose $c=(c_1,\ldots,c_n)$ where each $c_j$ is a positive 
real number small enough so that ${\cal U}_c$ has $m_0$ connected components 
${\cal U}_c^k$, the closures of which are pairwise 
disjoint. 

Now fix $z \in {\cal U}_c$ and let ${\cal U}_c^{k_0}$ denote the neighbourhood of $z^{(k_0)}$ to 
which $z$ belongs.  For any $m\in \mathbb{N}$ and any $g \in H^\infty({\cal U}_a)$, we have 
\begin{align*}
(J_mg)(z) 
& = \sum_{l=1}^m F(z)^{\nu(l)} \frac{1}{(2\pi i)^n}\int_{\partial^*{\cal U}_a}^* g(\zeta)
\frac{\det(G(\zeta,z))}{F(\zeta)^{{\cal I}+ \nu(l)}} \,d\zeta\\
& = \sum_{k=1}^{m_0} \sum_{l=1}^m F(z)^{\nu(l)} \frac{1}{(2\pi i)^n} \int_{\partial^*{\cal U}_c^k} 
g(\zeta)\frac{\det(G(\zeta,z))}{F(\zeta)^{{\cal I}+ \nu(l)}} \,d\zeta \,.
\end{align*}
Each integral 
$\displaystyle \int_{\partial^*{\cal U}_c^k} g(\zeta)
\frac{\det(G(\zeta,z))}{F(\zeta)^{{\cal I}+ \nu(l)}} \,d\zeta$ is over a small 
neighbourhood of a zero $z^{(k)}$ of $F$. 
Since $z \in {\cal U}_c^{k_0} \subset \overline{{\cal U}_c^{k_0}}$, 
which is disjoint from $\overline{{\cal U}_c^k}$  for any $k\neq k_0$, we have 
\[ \int_{\partial^*{\cal U}_c^k} g(\zeta) \frac{\det(G(\zeta,z))}{(F(\zeta)-F(z))^{\cal I}}\,d\zeta =0,
\quad (k\neq k_0)\,.\]
For any $1\le k \le m_0$, the underlying mapping 
$F$ is a biholomorphism from ${\cal U}_c^{k}$,  a neighbourhood 
of $z^{(k)}$, onto a neighbourhood $V_{k}$ of the origin. 
Let $\pi_{k}$ denote the corresponding inverse mapping, so that 
$F(\zeta)=w$ implies $\pi_k(w)=\zeta$. Then for any $l \ge 1$ 
and for any $1\le k \le m_0$, we have 
\begin{align*}
& \frac{1}{(2\pi i)^n}\int_{\partial^*{\cal U}_c^{k}} g(\zeta)
  \frac{\det(G(\zeta,z))}{F(\zeta)^{{\cal I}+ \nu(l)}} \,d\zeta \\
= & \frac{1}{(2\pi i)^n}\int_{\partial^*P(O,c)} g(\pi_k(w))
   \frac{\det(G(\pi_k(w),z))}{w^{{\cal I}+ \nu(l)}} \,(\Jac \, \pi_k)(w)dw \\
= & \frac{1}{\nu(l) !}  \frac{\partial^{\nu(l)}}{\partial w^{\nu(l)}}\left( g\circ \pi_{k} \cdot   
    \det(G(\pi_k,z)) \cdot  \Jac \, \pi_k \right)(0)\, ,
\end{align*}
where $\Jac \, \pi_k$ is the complex Jacobian of the holomorphic map $\pi_k$: 
\[ \Jac \, \pi_k=\det \left( \frac{\partial \pi_{k,j}}{\partial w_{j'}} \right)_{1\le j, j'\le n}\,.\]
Consequently we have for any $z \in {\cal U}_c$
\[ (J_m g)(z)=\sum_{l=1}^m \sum_{k=1}^{m_0} \frac{1}{\nu(l) !}  \frac{\partial^{\nu(l)}}{\partial 
w^{\nu(l)}}\left( g\circ \pi_{k} \cdot \det(G(\pi_k,z)) \cdot \Jac \, \pi_k \right)(0)
F(z)^{\nu(l)} \,.
\]
But since the 
right-hand side of this equality is a well defined holomorphic function on all of ${\cal U}_a$ and  
since $J_m g$ is in ${\cal A}(\overline{{\cal U}_b})$, the analytic continuation principle 
implies that the equality above holds for every $z\in \clo{{\cal U}_b}$.

Using the product rule, it follows that the partial derivative 
\[ 
\frac{\partial^{\nu(l)}}{\partial w^{\nu(l)}}\left( g\circ \pi_{k} \cdot 
  \det(G(\pi_k,z))\cdot  \Jac \, \pi_k \right)(0)
\] 
involves only partial derivatives of $g$ of the form  
$\frac{\partial^\mu g}{\partial \zeta^\mu}(z^{(k)})$,
where the multi-indices satisfy 
$0\le \mu_1 \le \nu_1(l), \ldots, 0 \le \mu_n \le \nu_n(l)$. Due to our choice of $\nu$, 
this implies that these multi-indices are of the form 
$\nu(l')$ with $1\le l' \le m$.

As a result, the collection of values  
\[ \displaystyle \frac{\partial^{\nu(l)} g}{\partial \zeta^{\nu(l)}}(z^{(k)})
\quad (1\le k \le m_0,\,1\le l \le m) \] 
completely determine the function $J_m(g)$ on $\overline{{\cal U}_b}$. 
Thus, for each $m\in \mathbb{N}$, the operator $J_m$ has finite rank 
with its rank bounded above by~$m_0m$.
\end{proof}

\subsubsection{Upper bound for Kolmogorov widths in the case of  special holomorphic polyhedra.}
We are now able to state and prove the main result of this subsection: 
explicit and asymptotically sharp upper bounds for the Kolmogorov widths of special 
holomorphic polyhedral condensers. 
\begin{Proposition}
\label{prop:specialcondenser}
Let $(\clo{{\cal U}_b}, {\cal U}_a)$ be a non-degenerate 
special holomorphic polyhedral condenser in $\mathbb{C}^n$ and 
$J:H^\infty({\cal U}_a)\to A(\clo{{\cal U}_b})$ the canonical identification. Then 
for any $m\in \mathbb{N}$ with $m>m_0$ we have 
\[ 
d_m(J)\leq 
a_m (J) \le C \left( \sum_{k=0}^{n-1}\frac{1}{k!}\left(c \frac{m-m_0}{m_0}\right)^{k/n}\right) 
\exp\left(-\left(c \frac{m-m_0}{m_0}\right)^{1/n}\right) \]
where 
\[ c=n! \prod_{k=1}^n \log(a_k/b_k)\,, \] 
$m_0$ is the multiplicity of the underlying mapping of the condenser, and $C$ is the explicit 
constant given in (\ref{eq:constantC}). In particular, we have 
\begin{equation}
\label{eq:specialcondenserasymp}
\liminf_{m\to \infty} -\frac{\log d_m\left({\cal A}_{\overline{{\cal U}_b}}^{{\cal U}_a}\right)}{m^{1/n}} 
\ge  
2\pi \left( \frac{ n!}{C(\overline{{\cal U}_b}, {\cal U}_a)}\right )^{1/n}\, .
\end{equation}
\end{Proposition}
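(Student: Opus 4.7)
The plan is to combine the approximation error estimate in Lemma~\ref{lem:JminusJm} with the rank bound in Lemma~\ref{lem:rankJm}, and then to translate the resulting explicit bound into the claimed asymptotic via the capacity formula in Proposition~\ref{theor30}. The core observation is that $a_m(J)\leq \|J-F\|$ for any operator $F$ of rank strictly less than $m$, so for each $m>m_0$ we simply need to pick the right index $\tilde m$ in the sequence $(J_{\tilde m})$.

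First, given $m>m_0$ I would choose $\tilde m=\lceil m/m_0\rceil-1$, so that on the one hand $\mathrm{rank}(J_{\tilde m})\leq m_0\tilde m<m$ by Lemma~\ref{lem:rankJm}, and on the other hand $\tilde m\geq (m-m_0)/m_0$. Then by the definition of the approximation numbers and Lemma~\ref{lem:JminusJm},
\[
a_m(J)\leq \|J-J_{\tilde m}\|_{H^\infty({\cal U}_a)\to A(\overline{{\cal U}_b})}\leq C\Bigl(\sum_{k=0}^{n-1}\frac{(c\tilde m)^{k/n}}{k!}\Bigr)\exp\bigl(-(c\tilde m)^{1/n}\bigr),
\]
with $c=n!\prod_k\log(a_k/b_k)$ and $C$ as in \eqref{eq:constantC}. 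Using $\tilde m\geq (m-m_0)/m_0$ and the fact that the right-hand side is decreasing in $\tilde m$ (for $m$ large enough that the exponential dominates), one obtains the explicit bound stated in the proposition; the inequality $d_m(J)\leq a_m(J)$ from \eqref{eq:dmam} then yields the first displayed estimate.

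For the asymptotic \eqref{eq:specialcondenserasymp} I would then take logarithms of the explicit bound and divide by $m^{1/n}$. The polynomial prefactor $\sum_{k=0}^{n-1}(c\tilde m)^{k/n}/k!$ contributes only $O(\log m)$ after taking $-\log$, hence is negligible after dividing by $m^{1/n}$, and therefore
\[
\liminf_{m\to\infty}\frac{-\log d_m(\mathcal A^{{\cal U}_a}_{\overline{{\cal U}_b}})}{m^{1/n}}\geq \liminf_{m\to\infty}\frac{(c(m-m_0)/m_0)^{1/n}}{m^{1/n}}=\Bigl(\frac{c}{m_0}\Bigr)^{1/n}.
\]
Finally, invoking the capacity computation from Proposition~\ref{theor30}, namely $C(\overline{{\cal U}_b},{\cal U}_a)=(2\pi)^n m_0/\prod_k\log(a_k/b_k)$, we get $c/m_0=n!(2\pi)^n/C(\overline{{\cal U}_b},{\cal U}_a)$, which rearranges to exactly $2\pi(n!/C(\overline{{\cal U}_b},{\cal U}_a))^{1/n}$.

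I do not anticipate any real obstacle here, since the hard work has already been done in Lemmas~\ref{lem:JminusJm} and \ref{lem:rankJm} and in Proposition~\ref{theor30}; the only mildly delicate step is the bookkeeping of the integer $\tilde m$, which must satisfy both $m_0\tilde m<m$ (so that $J_{\tilde m}$ is admissible as a rank-$(m-1)$ approximant) and $\tilde m\geq (m-m_0)/m_0$ (so that the exponent in the bound has the required form in terms of $m$). The choice $\tilde m=\lceil m/m_0\rceil-1$ does both simultaneously, and all remaining manipulations are routine.
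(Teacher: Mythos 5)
Your proposal is correct and follows essentially the same route as the paper, which likewise combines Lemma~\ref{lem:JminusJm} and Lemma~\ref{lem:rankJm} with the monotonicity of $m\mapsto a_m(J)$ and then invokes Proposition~\ref{theor30}. The only hedge you make is unnecessary: substituting $s=(ct)^{1/n}$, the function $s\mapsto\bigl(\sum_{k=0}^{n-1}s^{k}/k!\bigr)e^{-s}$ has derivative $-s^{n-1}e^{-s}/(n-1)!\le 0$, so your bound is decreasing in $\tilde m$ for \emph{all} $\tilde m\ge 0$ and the explicit estimate holds for every $m>m_0$ as stated, not merely for large $m$.
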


\begin{proof}
The explicit upper bound follows from Lemma~\ref{lem:JminusJm} and Lemma~\ref{lem:rankJm} 
together with the fact  the sequence $m\mapsto a_m(J)$ is monotonically decreasing.  
Moreover, 
using Proposition~\ref{theor30}, which yields 
\[ C(\overline{{\cal U}_b}, \, {\cal U}_a) = \frac{(2\pi)^n m_0}{\prod_{k=1}^{n} \log(a_k/b_k)}\,,\] 
the assertion (\ref{eq:specialcondenserasymp}) follows. 
\end{proof}

\begin{Remark}
A stronger version of the asymptotics (\ref{eq:specialcondenserasymp}) above, 
with similar hypotheses but with a limit instead of the limit inferior and equality instead of inequality 
can be found, without proof, as Proposition~5.1 in Zakharyuta's survey article \cite{Z11}, where 
it is credited to \cite{Z4} in which it appears, again without proof, as a consequence of 
\cite[Theorem~4.5]{Z4}.
\end{Remark}

\subsection{Exhaustion of $D$ and $K$ by special holomorphic polyhedra}
We shall now extend the sharp asymptotic upper bound obtained at the end of the previous 
subsection for special polyhedral condensers to more general condensers $(K,D)$ where  
$D$ is strictly hyperconvex and $K$ a regular compact subset of $D$. 

In this setup, we have the following refinement of Theorem $3$ in \cite{N4},
itself a quantitative version of a lemma of Bishop~\cite{Bi}, which 
provides an external approximation of $\hat{K}_D$, the holomorphically convex hull of 
$K$ in $D$, and an internal approximation of $D$ 
by two special holomorphic polyhedra defined simultaneously 
by the same holomorphic mapping such
that the relative capacities of the approximations converge to the capacity 
$C(K,D)=C(\hat{K}_D,D)$. 

\begin{Theorem} \label{thm:Capprox}
Let $D$ be a strictly hyperconvex domain in $\mathbb{C}^n$ and $K$ a regular compact 
subset of $D$. Then, for every $j\in \mathbb{N}$, there is a 
non-degenerate special holomorphic polyhedral 
condenser $(\clo{{\cal U}}^1_j, {\cal U}^2_j)$ with 
\[ K\subset \hat{K}_D \subset \clo{{\cal U}}^1_j \subset {\cal U}^2_j \subset D \]
such that 
\[ \lim_{j\to \infty} C(\clo{{\cal U}}^1_j, {\cal U}^2_j) = C(K,D)\,.\]
\end{Theorem}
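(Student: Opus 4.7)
The plan is to reduce to the existing Bishop-type approximation theorem (Theorem~3 of \cite{N4}) and then to upgrade it by a small perturbation to secure non-degeneracy, finally verifying that the capacity asymptotic is preserved.

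First I would invoke Theorem~3 of \cite{N4}, which produces, for each $j$, a holomorphic mapping $F_j = (f_{1,j}, \ldots, f_{n,j})$ defined on a neighbourhood $\Omega_j$ of $\clo{D}$, together with parameters $a_j, b_j \in (0,\infty)^n$ with $b_{k,j} < a_{k,j}$, such that the associated special holomorphic polyhedra $\widetilde{\mathcal{U}}^1_j$ and $\widetilde{\mathcal{U}}^2_j$ (obtained as unions of suitable connected components of the sub-level sets $\{|f_{k,j}|<b_{k,j},\,\forall k\}$ and $\{|f_{k,j}|<a_{k,j},\,\forall k\}$) satisfy $\hat{K}_D \subset \clo{\widetilde{\mathcal{U}}^1_j}\subset \widetilde{\mathcal{U}}^2_j \subset D$ and $C(\clo{\widetilde{\mathcal{U}}^1_j}, \widetilde{\mathcal{U}}^2_j) \to C(K,D)$ as $j\to \infty$. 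The underlying idea, going back to Bishop, is that the continuous function $u^*_{K,D}$ can be written as a decreasing limit of functions of the form $\max_k \tfrac{\log(|f_{k,j}|/a_{k,j})}{\log(a_{k,j}/b_{k,j})}$; by Proposition~\ref{theor30} the right-hand side is precisely the relative extremal function of the polyhedral condenser, and convergence of these extremal functions together with continuity of the Monge--Amp\`ere operator on decreasing sequences of bounded psh functions gives capacity convergence.

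Second, to promote $(\clo{\widetilde{\mathcal{U}}^1_j}, \widetilde{\mathcal{U}}^2_j)$ to a \emph{non-degenerate} condenser, I would perturb $F_j$. As observed in the paragraph preceding Lemma~\ref{lem:rankJm}, if $O\in \mathbb{C}^n$ fails to be a regular value of $F_j|_{\widetilde{\mathcal{U}}^2_j}$, then by Sard's theorem there is a regular value $c_j\in \mathbb{C}^n$ of arbitrarily small norm. Replacing $F_j$ by $F_j - c_j$ and adjusting $a_j,b_j$ by the same $o(1)$ amount (so that the containment $\hat{K}_D\subset\clo{\mathcal{U}^1_j}\subset\mathcal{U}^2_j\subset D$ is preserved and the relevant connected components remain unchanged), we obtain a non-degenerate condenser $(\clo{\mathcal{U}^1_j}, \mathcal{U}^2_j)$. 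By the explicit formula in Proposition~\ref{theor30}, the capacity $C(\clo{\mathcal{U}^1_j}, \mathcal{U}^2_j) = (2\pi)^n m_{0,j}/\prod_k \log(a_{k,j}/b_{k,j})$ changes only by a small amount under such a perturbation (the multiplicity $m_{0,j}$ is locally constant and the factor $\prod_k \log(a_{k,j}/b_{k,j})$ varies continuously), so the limit $C(\clo{\mathcal{U}^1_j}, \mathcal{U}^2_j)\to C(K,D)$ survives the perturbation after choosing the size of $c_j$ to shrink fast enough with $j$.

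The principal obstacle is the book-keeping in the perturbation step: the identification of the correct connected components of $\{|f_{k,j}-c_{k,j}|<a_{k,j}\}$ and $\{|f_{k,j}-c_{k,j}|<b_{k,j}\}$ may differ from those of the unperturbed polyhedra near the distinguished boundary, and one has to ensure that (i)~the inclusions $\hat{K}_D\subset \clo{\mathcal{U}^1_j}\subset \mathcal{U}^2_j\subset D$ are preserved, (ii)~the components chosen in Theorem~3 of \cite{N4} deform continuously with the parameter $c_j$, and (iii)~the explicit capacity formula of Proposition~\ref{theor30} still applies with the same multiplicity. This is a quantitative stability question for the Bishop-type construction of \cite{N4}, handled by tracking the parameters $a_j,b_j$ and $c_j$ together and using the fact that $F_j$ is proper and finite on $\widetilde{\mathcal{U}}^2_j$.
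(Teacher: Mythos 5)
Your overall strategy --- invoke the Bishop-type approximation theorem (Theorem~3 of \cite{N4}) to produce the approximating polyhedra, then perturb the underlying mapping by an arbitrarily small regular value to secure non-degeneracy --- is the same as the paper's, and your perturbation step is exactly the paper's final remark. The one place where your outline has a genuine gap is in what you attribute to Theorem~3 of \cite{N4}.

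That theorem does not directly hand you a special holomorphic polyhedral condenser with convergent capacity. After setting up the exhaustion $D_j=\{\varrho<1/j\}$ and the level sets $D(r)=\{u_{K,D}<r\}$, it produces an inner polyhedron $\mathcal{U}^0$ defined as the union of those components of $\{\sup_l p^{-1}\log|f_l|<-1+\epsilon+\beta(\epsilon')\}$ that \emph{meet} $\overline{D(-1+\epsilon)}$, sandwiched as $\overline{D(-1+\epsilon)}\subset\mathcal{U}^0\subset D(-1+\epsilon+\epsilon')$, together with an outer polyhedron $\mathcal{U}^2$ sandwiched near $\partial D$; monotonicity of the relative capacity in both arguments then forces $C(\overline{\mathcal{U}}^0,\mathcal{U}^2)\to C(K,D)$. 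But $(\overline{\mathcal{U}}^0,\mathcal{U}^2)$ need not be a special holomorphic polyhedral condenser in the sense used in Proposition~\ref{theor30} and Lemma~\ref{lem:rankJm}: that notion requires the inner polyhedron to consist of \emph{all} components of the smaller sublevel set contained in $\mathcal{U}^2$ (call this $\mathcal{U}^1\supset\mathcal{U}^0$), so that $F$ is a proper unramified covering of the small polydisc with the same multiplicity $m_0$ as over the large one. Since the extra components can only increase the capacity, one must separately prove $C(\overline{\mathcal{U}}^1,\mathcal{U}^2)\to C(K,D)$; the paper does this by an argument modelled on Lemma~5.2 of \cite{N4}. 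Your appeal to ``convergence of the extremal functions plus continuity of the Monge-Amp\`ere operator'' tacitly assumes the condenser's extremal function is $\sup_l\log(|f_l|/a_l)/\log(a_l/b_l)$, which is precisely what is unjustified if the inner polyhedron is taken to be $\mathcal{U}^0$ rather than $\mathcal{U}^1$. Apart from this missing step (and the correspondingly optimistic reading of what \cite{N4} provides), your outline, including the stability discussion for the perturbation, is sound.
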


\begin{proof}
We start by recalling some notation already used in 
Subsection~\ref{sec:1.1}, as well as in 
Subsection~\ref{sec:2.4}, immediately above Lemma~\ref{lem:10}. 
  
Since $D$ is strictly hyperconvex there exists a bounded domain $\Omega$ and an exhaustion 
function $\varrho \in  PSH( \Omega, (-\infty, 1))\cap C(\Omega)$ 
such that $D=\all{z\in\Omega}{\varrho (z)<0}$ and, for all real numbers $c \in [0,1]$, the 
open set $\all{z \in \Omega}{ \varrho (z)<c }$ is connected.

For any integer $j \ge 1$, let $D_j$ denote the bounded hyperconvex domain 
$\all{z\in\Omega}{\varrho (z)<1/j}$.

Since $K$ is a regular compact subset of $D$ the relative extremal function
$u_{K,D}$ is continuous in $\overline{D}$. 
Let now $u$ and $u_j$ denote the relative extremal
functions $u_{K,D}$ and $u_{K,D_j}$, respectively.

Lemma~2.6 in \cite{N4} then implies the following: 
$K$ is regular for any $D_j$ with $j\ge 1$, the sequence $(u_j)$ converges uniformly on 
$\clo{D}$ to $u$, and the increasing sequence
of capacities $(C(K,D_j))_{j\in \mathbb{N}}$ converges to the capacity $C(K,D)$.

For any $r\in [-1,0]$ write $D(r)$ for the set $\all{z \in D}{u(z) < r}$. 
As $u$ is continuous on $\clo{D}$ and an exhaustion function for $D$, each $D(r)$ is open. 
Moreover, for any  $-1<r<0$ sufficiently close to $0$, 
the open sets $D(r)$ are connected, since $D$ is, and provide an internal exhaustion of $D$. 
Similarly, for any $-1<r<0$ sufficiently near $-1$, the open sets $D(r)$ provide an external 
exhaustion of $\hat{K}_D = \all{z\in D}{u_{K,D}(z)=-1}$.

We shall now proceed to the construction of the special holomorphic polyhedra by a five-step 
process of definitions and  
assertions, the first two of which, follow from \cite[Theorem~3]{N4}. 

For any $\epsilon, \epsilon'> 0$ sufficiently small with  
$\epsilon'<\epsilon$,  there exist two integers $j \ge 2$ and $p\ge 2$ and there
exist $n$ holomorphic functions $f_1,\ldots, f_n \in {\cal O}(D_j)$ such that the following assertions 
hold. \\
(i) $\displaystyle \frac{1}{p} \log | f_l(z) | \le u_{2j}(z)$ in $D_{2j}$, for all $1\le l \le n$.\\
(ii) There exist two special holomorphic polyhedra 
  ${\cal U}^0$ and ${\cal U}^2$ and a positive constant $\beta(\epsilon')$ satisfying 
$\beta(\epsilon') \le \epsilon'/2$, such that
\[ K \subset \hat{K}_D \subset \overline{D(-1+\epsilon)}\subset {\cal U}^0
\subset D( -1+\epsilon + \epsilon') \]
\[ \mbox{and }\,  \overline{D(-\epsilon)}
\subset {\cal U}^2 \subset D( -\epsilon + \epsilon').\]
Here, ${\cal U}^0$ is the finite union of the connected components of the open set
\[ \all{z \in D}{\sup_{1\le l \le n} \frac{1}{p} \log | f_l(z) | < -1 +
\epsilon +\beta(\epsilon')}\]
that meet $\overline{D(-1+\epsilon)}$,  and ${\cal U}^2$ is the connected component containing $
\overline{D(-\epsilon)}$, of the open set 
\[ \all{z \in D}{\sup_{1\le l \le n}\frac{1}{p} \log | f_l(z) | < -\epsilon + \beta(\epsilon')}\,.\]
Using the inclusions in assertion (ii), we obtain the next assertion\\
(iii) $\,  C(\overline{D(-1+\epsilon)},D( -\epsilon + \epsilon')) 
\le C(\overline{{\cal U}}^0,{\cal U}^2) 
\le C(\overline{D(-1+\epsilon+\epsilon')},D(-\epsilon))\,.$
Indeed, we have 
\[ C(\overline{D(-1+\epsilon)},D( -\epsilon + \epsilon')) 
\, \le \, C(\overline{\cal U}^0,D( -\epsilon + \epsilon')) 
\le \, C(\overline{\cal U}^0,{\cal U}^2)\]
and 
\[C(\overline{\cal U}^0,{\cal U}^2)
\le \, C(\overline{D(-1+\epsilon+\epsilon')},{\cal U}^2) 
\le \, C(\overline{D(-1+\epsilon+\epsilon')},D(-\epsilon))\,.\]
(iv) Denote by ${\cal U}^1$ the special holomorphic polyhedron 
defined as the finite union of all connected components of the open set
\[ \all{z \in D}{\sup_{1\le l \le n} \frac{1}{p} \log | f_l(z) | < -1 +
\epsilon +\beta(\epsilon')}\] 
included in ${\cal U}^2$.\\
 (v) The holomorphic mapping $$F=(f_1,...,f_n) : {\cal U}^2 \longrightarrow \bbC^n$$
is proper and surjective from the bounded special holomorphic polyhedron
${\cal U}^2$ 
to the polydisc $P(0,r_2)$ 
and from the bounded special holomorphic polyhedron ${\cal U}^1$ to 
the polydisc $P(0,r_1)$, where $r_2=\exp(p(-\epsilon + \beta(\epsilon')))$ 
and $r_1=\exp(p(-1+\epsilon + \beta(\epsilon')))$), respectively. 
This follows from \cite[Proposition~3.4]{N4}. 

All in all, the above yields, for any $\epsilon, \epsilon'> 0$ sufficiently small with  
$\epsilon'<\epsilon$, three special holomorphic polyhedra 
${\cal U}^0_{\epsilon, \epsilon'}$, 
${\cal U}^1_{\epsilon, \epsilon'}$, and 
${\cal U}^2_{\epsilon, \epsilon'}$ with 
\[ K\subset \hat{K}_D\subset {\cal U}^0_{\epsilon, \epsilon'}\subset {\cal U}^1_{\epsilon, \epsilon'}
\subset {\cal U}^2_{\epsilon, \epsilon'}\subset D\,. \]
By (iii) we know that $C(\overline{{\cal U}}^0_{\epsilon, \epsilon'},{\cal U}^2_{\epsilon, \epsilon'})$
converges to $C(\hat{K}_D,D)=C(K,D)$ when $\epsilon$ and $\epsilon'$ both tend to $0$.
Moreover, using an argument similar to the proof of Lemma 5.2 in \cite{N4} it follows that 
$C(\overline{{\cal U}}^1_{\epsilon, \epsilon'},{\cal U}^2_{\epsilon, \epsilon'})$ 
also converges to $C(K,D)$ when $\epsilon$ and $\epsilon'$ both tend to $0$. This, coupled with 
assertion (v) yields the desired sequence of special holomorphic polyhedral condensers, each of 
which can be chosen to be non-degenerate, by possibly replacing the underlying 
mapping $F_{\epsilon, \epsilon'}$ with $F_{\epsilon, \epsilon'}-c_{\epsilon, \epsilon'}$, for 
$c_{\epsilon, \epsilon'}$ a sufficiently small regular value of $F_{\epsilon, \epsilon'}$. 
\end{proof}

Combining the previous theorem with the main result in the previous subsection 
we now have the following. 
\begin{Proposition}
\label{prop:312}
Let $D$ be a strictly hyperconvex domain in $\mathbb{C}^n$ and $K$ a regular 
compact subset of $D$. Then we have the asymptotics
\[ \liminf_{m\to \infty} -\frac{\log d_m\left({\cal A}_{K}^{D}\right)}{m^{1/n}} \ge (2\pi) \left(\frac{ n!}
{C(K,D)}\right)^{1/n}\, .\]
\end{Proposition}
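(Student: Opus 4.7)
The plan is to deduce the asymptotic lower bound for the general condenser $(K,D)$ from the sharp polyhedral bound (Proposition~\ref{prop:specialcondenser}) via the polyhedral exhaustion of Theorem~\ref{thm:Capprox}, transferred to the general case by a simple monotonicity argument for the Kolmogorov numbers.

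First, I would invoke Theorem~\ref{thm:Capprox} to produce, for each $j\in\bbN$, a non-degenerate special holomorphic polyhedral condenser $(\clo{{\cal U}}^1_j,{\cal U}^2_j)$ satisfying $K\subset \hat{K}_D\subset \clo{{\cal U}}^1_j\subset {\cal U}^2_j\subset D$ and $C(\clo{{\cal U}}^1_j,{\cal U}^2_j)\to C(K,D)$ as $j\to\infty$. The inclusions ${\cal U}^2_j\subset D$ and $K\subset \clo{{\cal U}}^1_j$ furnish restriction operators of norm at most one,
\[ S_j\colon H^\infty(D)\to H^\infty({\cal U}^2_j), \qquad R_j\colon A(\clo{{\cal U}}^1_j)\to A(K), \]
such that the canonical embedding $J\colon H^\infty(D)\to A(K)$ factors as $J=R_j\circ J_j\circ S_j$, where $J_j\colon H^\infty({\cal U}^2_j)\to A(\clo{{\cal U}}^1_j)$ denotes the canonical embedding of the polyhedral condenser. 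Then the submultiplicativity of Kolmogorov numbers \eqref{eq:dmsubm} immediately yields
\[ d_m({\cal A}_K^D) = d_m(J) \leq \|R_j\|\, d_m(J_j)\, \|S_j\| \leq d_m({\cal A}_{\clo{{\cal U}}^1_j}^{{\cal U}^2_j}) \quad (\forall m\in\bbN). \]

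Combining this monotonicity with Proposition~\ref{prop:specialcondenser} applied to $(\clo{{\cal U}}^1_j,{\cal U}^2_j)$ gives, for each fixed $j\in\bbN$,
\[ \liminf_{m\to\infty} \frac{-\log d_m({\cal A}_K^D)}{m^{1/n}} \geq 2\pi\left(\frac{n!}{C(\clo{{\cal U}}^1_j,{\cal U}^2_j)}\right)^{1/n}, \]
and letting $j\to\infty$ while using the capacity convergence from Theorem~\ref{thm:Capprox} completes the argument. I do not anticipate any genuine obstacle at this step: all the substantive work has already been absorbed into Theorem~\ref{thm:Capprox}, whose simultaneous external/internal polyhedral approximation with controlled capacity asymptotics is the delicate ingredient, and into Proposition~\ref{prop:specialcondenser}, where the Bergman--Weil formula supplies the sharp polyhedral bound. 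The only verification needed here is that the two restriction maps are contractions, which is immediate from the definitions of the supremum norms on the nested sets.
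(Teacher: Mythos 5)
Your proposal is correct and follows essentially the same route as the paper's proof: exhaust via the polyhedral condensers of Theorem~\ref{thm:Capprox}, factor the canonical embedding through the polyhedral one using norm-one restriction maps, apply submultiplicativity of the Kolmogorov numbers together with Proposition~\ref{prop:specialcondenser}, and pass to the limit in $j$ using the capacity convergence. No gaps.
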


\begin{proof}
Let $(\clo{{\cal U}}^1_j, {\cal U}^2_j)_j$ 
denote the sequence of non-degenerate special holomorphic polyhedral 
condensers given in Theorem~\ref{thm:Capprox}. Since 
\[ K\subset \hat{K}_D \subset \clo{{\cal U}}^1_j \subset {\cal U}^2_j \subset D\,, \]
we have, for every $j\in \mathbb{N}$ 
\[ H^\infty(D)\hookrightarrow H^\infty( {\cal U}^2_j) \to A(\clo{{\cal U}}^1_j) \to 
A(K)\,. \]
Both $H^\infty(D)\hookrightarrow H^\infty( {\cal U}^2_j)$ and  
$A(\clo{{\cal U}}^1_j) \to A(K)$ are easily seen to be continuous with norm $1$, so
\[ d_m({\cal A}_K^D) \le d_m({\cal A}_{\overline{\cal U}^1_j}^{{\cal U}^2_j}) \quad 
(\forall m,j\in \mathbb{N})\,, \]
and hence 
\[ 
\liminf_{m\to \infty} -\frac{\log d_m\left({\cal A}_{K}^{D}\right)}{m^{1/n}} \ge  \liminf_{m\to \infty} -
\frac{\log d_m\left({\cal A}_{\overline{{\cal U}}^1_j}^{{\cal U}^2_j}\right)}{m^{1/n}} 
\quad (\forall j\in \mathbb{N})\,.
\]
Using Proposition~\ref{prop:specialcondenser} we have 
\[ 
\liminf_{m\to \infty} -\frac{\log d_m\left({\cal A}_{\overline{{\cal U}}^1}^{{\cal U}^2}\right)}{m^{1/n}} 
\ge  (2\pi) \left (\frac{ n!} {C(\overline{{\cal U}}^1_j, {\cal U}^2_j)}\right)^{1/n}
\quad (\forall j \in \mathbb{N})
\]
and the assertion follows as 
$C(\overline{{\cal U}}^1_j, {\cal U}^2_j)$ converges to $C(K,D)$ as $j\to \infty$ by 
Theorem~\ref{thm:Capprox}. 
\end{proof}

The hypothesis in the proposition above can be weakened further using the following result. 

\begin{Lemma} \label{lem32}
Let $D$ be a bounded hyperconvex domain in $\bbC^n$ containing a 
compact subset $K$.  
Then we can approximate $\hat{K}_D$ externally and $D$ internally 
by two sequences $(K_j)$ and $(D_j)$ such that:
\begin{itemize}
\item[(i)] $(K_j)$ is a decreasing sequence of compact sets and $(D_j)$ is an increasing sequence 
of domains with $\cap_j K_j = \hat{K}_D$, $K_{j+1} \subset \stackrel{\circ}{K}_{j}$, 
$\overline{D}_j\subset D_{j+1}$ and 
$\cup_j D_j = D$; 
\item[(ii)] for any $j$, each compact 
$K_j$ is holomorphically convex and regular in $D_j$, which in turn is strictly hyperconvex;
\item[(iii)] 
the sequence $(C(K_j,D_j))_j$ converges to $C(K,D)$.
\end{itemize}
\end{Lemma}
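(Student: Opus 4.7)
The strategy is to build the sequences $(D_j)$ and $(K_j)$ essentially independently as sublevel sets of carefully chosen smooth strictly psh functions, leveraging the Cegrell-type results Theorem~\ref{theor21} and Corollary~\ref{cor21} already used in Subsection~\ref{sec:2.3}.

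For $(D_j)$, I would take a smooth strictly psh exhaustion $\rho\in\mathcal{E}_0(D)\cap C^\infty(D)$ of $D$ (Corollary~\ref{cor21}), choose a sequence of regular values $c_j\uparrow 0$ of $\rho$ (dense by Sard's theorem), and define $D_j$ as the connected component of $\{z\in D:\rho(z)<c_j\}$ containing $\hat{K}_D$. The latter is unambiguous for $j$ large, since any two points of the compact set $\hat{K}_D$ can be joined inside $D$, hence inside $\{\rho<c_j\}$ once $c_j$ is close enough to $0$. Strict hyperconvexity of $D_j$ follows by taking as $\Omega_j$ a small open neighborhood of $\overline{D}_j$ disjoint from the other components of $\{\rho<c_j\}$, together with the rescaled exhaustion $(\rho-c_j)/|c_j|$. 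The inclusions $\overline{D}_j\subset D_{j+1}$ and $\bigcup_j D_j=D$ follow from $c_j<c_{j+1}$ and $\rho\to 0$ at $\partial D$.

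For $(K_j)$, I would first treat the case where $\hat{K}_D$ is non-pluripolar. On each $D_j$, set $w_j:=u^*_{\hat{K}_D,D_j}$ and apply Theorem~\ref{theor21} (Cegrell) to obtain a decreasing sequence of smooth $\psi_{j,k}\in\mathcal{E}_0(D_j)\cap C^\infty(D_j)$ converging pointwise to $w_j$. Adding a small multiple of a smooth strictly psh exhaustion of $D_j$ yields smooth \emph{strictly} psh approximants $\phi_j\approx w_j$ with values in $(-1-\delta_j,0)$. Picking regular values $\eta_j\downarrow 0$, define
\[ K_j:=\{z\in D_j:\phi_j(z)\le -1+\eta_j\}\,. \]
As a sublevel set of a psh function, $K_j$ is automatically holomorphically convex in $D_j$, while the smoothness of the level set $\{\phi_j=-1+\eta_j\}$ and the strict plurisubharmonicity of $\phi_j$ supply a local psh barrier at each boundary point, forcing regularity. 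The pluripolar case $C(K,D)=0$ is handled separately by taking $K_j$ to be analytic polyhedra with smooth boundary externally exhausting $\hat{K}_D$ (possible because $\hat{K}_D$ is holomorphically convex in the pseudoconvex domain $D$); each such $K_j$ is holomorphically convex and regular, and $C(K_j,D_j)\to 0$ automatically.

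Properties (i) and (ii) then follow from the construction after careful bookkeeping of the parameters $c_j$, the Cegrell indices $k(j)$, and $\eta_j$ so that $K_{j+1}\subset\stackrel{\circ}{K}_j$ and $\bigcap_j K_j=\hat{K}_D$. Property (iii) splits into two halves: monotonicity of relative capacity (larger compact, smaller domain) yields $C(K_j,D_j)\ge C(\hat{K}_D,D)=C(K,D)$ for all $j$, while the reverse inequality $\limsup_j C(K_j,D_j)\le C(K,D)$ is the main obstacle. This I would establish by invoking the Bedford-Taylor continuity of Monge-Amp\`ere masses along the combined approximation $K_j\downarrow\hat{K}_D$, $D_j\uparrow D$, in the spirit of the comparison argument in the proof of Lemma~\ref{lem:10}, using the smoothness of $\phi_j$ to control $u^*_{K_j,D_j}$ and to pass to the limit.
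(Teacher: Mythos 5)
There is a genuine gap in your construction of $K_j$, and it occurs precisely in the case the lemma is designed to handle. You set $K_j=\{z\in D_j:\phi_j(z)\le -1+\eta_j\}$ where $\phi_j$ is a smooth approximant of the \emph{regularised} extremal function $w_j=u^*_{\hat{K}_D,D_j}$; Cegrell's theorem (Theorem~\ref{theor21}) produces a \emph{decreasing} sequence, so $\phi_j\gtrsim w_j$ pointwise. Since $D_j\subset D$ forces $w_j\ge u^*_{\hat{K}_D,D}$ on $D_j$, at any irregular point $z_0$ of $\hat{K}_D$ (a point where $u_{\hat{K}_D,D}(z_0)=-1$ but $u^*_{\hat{K}_D,D}(z_0)=-1+\tau$ with $\tau>0$) you get $\phi_j(z_0)\ge -1+\tau+o(1)$, hence $z_0\notin K_j$ once $\eta_j<\tau$. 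No tuning of the parameters can then give $\hat{K}_D\subset K_j$ or $\bigcap_j K_j=\hat{K}_D$, and you may even lose $K\subset K_j$, which is indispensable for the restriction map $A(K_j)\to A(K)$ used in Theorem~\ref{thm:mainupper}. Irregular points are not confined to the pluripolar case you treat separately (take $K$ the union of a closed ball and a disjoint singleton), and since regular $K$ is already covered by Proposition~\ref{prop:312}, the irregular case is exactly the point of Lemma~\ref{lem32}. The remedy is to fatten \emph{before} taking extremal functions: the paper replaces $K$ by its closed $\epsilon$-neighbourhood $K^\epsilon$, which is regular, contains $K$ in its interior, and satisfies $C(K^\epsilon,D)\to C(K,D)$ by Bedford--Taylor continuity of the capacity under decreasing sequences of compacts; no smoothing and no strict plurisubharmonicity are needed anywhere.

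A second, lesser defect is architectural: by defining $K_j$ and $D_j$ through two unrelated functions you forfeit any explicit control of $u_{K_j,D_j}$, so your item (iii) really does become a nontrivial double limit, which you only gesture at. It is repairable (for $j\ge j_0$, monotonicity gives $C(K_j,D_j)\le C(K_j,D_{j_0})$, which tends to $C(\hat{K}_D,D_{j_0})$ as $j\to\infty$ by continuity under decreasing compacts, and then to $C(\hat{K}_D,D)=C(K,D)$ as $j_0\to\infty$), but the paper avoids the issue altogether by taking both sets to be sublevel sets of the \emph{same} continuous function $u_{\epsilon_j}=u_{K^{\epsilon_j},D}$, namely $K_j=\overline{\{u_{\epsilon_j}<-1+\delta_j\}}$ and $D_j=\{u_{\epsilon_j}<-\delta_j\}$. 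Exactly as in Lemma~\ref{lem:11}, this yields $u_{K_j,D_j}=\max\{(u_{\epsilon_j}+\delta_j)/(1-2\delta_j),\,-1\}$ and hence the exact identity $C(K_j,D_j)=C(K^{\epsilon_j},D)/(1-2\delta_j)^n$, from which (iii) is immediate. I would encourage you to adopt this ``one defining function for both sets'' device; it simultaneously fixes the containments, the regularity and strict hyperconvexity claims, and the capacity convergence.
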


\begin{proof}
Note that $K$ is not supposed to be regular, that is, $u=u_{K,D}$ 
is not necessarily continuous in $\bar{D}$. 
Now, for any  $\epsilon > 0$ sufficiently small, 
let $K^\epsilon$ denote the compact subset of $D$ defined by 
\[ K^\epsilon = \all{z \in D}{{\rm dist}(z,\partial D ) \le \epsilon}\,.\]
Using Corollaries $4.5.9$, $4.5.11$ and Proposition $4.7.1$ in \cite{K2}, we deduce, successively,  the following:
\begin{itemize}
\item[(a)] $u_{K^\epsilon,D}$ is continuous in $D$, that is, 
$K^\epsilon$ is regular for $D$; 
\item[(b)]  $(u_{K^\epsilon,D})_{\epsilon >0}$ 
converges pointwise and monotonically from below to $u$ in $D$ when 
$\epsilon$ decreases to $0$; 
\item[(c)] $u_{K^\epsilon,D} = u_{\widehat{K^\epsilon}_D,D}$ and 
  $C(K^\epsilon,D)=C(\widehat{K^\epsilon_D},D)$; 
\item[(d)] $\widehat{K^\epsilon_D}$ approximates $\hat{K}_D$ in the sense that   
$\cap_{\epsilon >0} \widehat{K^\epsilon_D} = \hat{K}_D$; 
\item[(e)] $\lim_{\epsilon \to 0} C(K^\epsilon,D) = C(K,D)$.
\end{itemize}
Now, since $u_{K^\epsilon,D}:=u_\epsilon$ is continuous on $\overline{D}$ and is an 
exhaustion 
function for $D$, we can construct an internal exhaustion of $D$ by strictly 
hyperconvex domains as follows: 
for any $\delta > 0$ sufficiently small, $D_\epsilon(-\delta)=\{z \in D : u_\epsilon(z) < -
\delta\}$ is a 
strictly hyperconvex domain and $\cup_{\delta>0} D_\epsilon(-\delta) = D$.

We now choose $K_j=\overline{D_{\epsilon_j}(-1+\delta_j)}$ and 
$D_j=D_{\epsilon_j}(-\delta_j)$, where each $\epsilon_j, \delta_j >0$ is sufficiently small 
with the sequences $(\epsilon_j)_j$ and $(\delta_j)_j$ strictly decreasing to zero and 
$\delta_{j+1}$ is chosen such that 
\[ 
\sup\alll{u_{\epsilon_{j+1}}(z)}{z\in \overline{D}_{\epsilon_j}(-\delta_j)}<-\delta_{j+1}\,.\]
In this case the 
relative extremal function $u_{K_j,D_j}$ for $K_j$ and $D_j$, is easily seen to be given 
explicitly by 
\[ u_{K_j,D_j}(z)=\max\{\frac{u_{\epsilon_j}(z)+\delta_j}{1-2\delta_j},\, -1\} \quad 
(\forall z\in D_{\epsilon_j}(-\delta_j))\,,\]
while the corresponding relative capacity satisfies  
\[ C(K_j,D_j)=\frac{C(K^{\epsilon_j},D)}{(1-2\delta_j)^n}\,,\]
and the assertions of the lemma follow. 
\end{proof}

All in all, we now obtain the main result of this section, furnishing the second part of our main 
result, Theorem~\ref{theor10}. 

\begin{Theorem} 
\label{thm:mainupper}
Let $D$ be a bounded hyperconvex domain in $\mathbb{C}^n$ and $K$ a compact subset of $D$. 
Then 
\[ \liminf_{m\to \infty} -\frac{\log d_m\left({\cal A}_{K}^{D}\right)}{m^{1/n}} \ge (2\pi) \left(\frac{ n!}
{C(K,D)}\right)^{1/n}\, .
\]
\end{Theorem}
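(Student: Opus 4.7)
The plan is to deduce this result from Proposition~\ref{prop:312} via the approximation argument afforded by Lemma~\ref{lem32}. Proposition~\ref{prop:312} already establishes the desired lower bound for $-\log d_m({\cal A}_K^D)/m^{1/n}$ under the stronger assumption that $D$ is strictly hyperconvex and $K$ is a regular compact subset of $D$, while Lemma~\ref{lem32} produces an increasing sequence $(D_j)$ of strictly hyperconvex sub-domains exhausting $D$ together with a decreasing sequence $(K_j)$ of compact sets which are holomorphically convex and regular in the corresponding $D_j$, satisfying $K \subset \hat{K}_D \subset K_j \subset D_j$ and $C(K_j,D_j) \to C(K,D)$.

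To execute the deduction, I would first fix $j \in \mathbb{N}$ and observe that restriction yields bounded linear maps
\[ R_j : H^\infty(D) \to H^\infty(D_j) \quad \text{and} \quad S_j : A(K_j) \to A(K), \]
each of operator norm at most one. Writing $J: H^\infty(D) \to A(K)$ and $J_j: H^\infty(D_j) \to A(K_j)$ for the canonical embeddings, one has the factorisation $J = S_j \circ J_j \circ R_j$, and the submultiplicativity of Kolmogorov numbers recorded in~(\ref{eq:dmsubm}) immediately gives
\[ d_m({\cal A}_K^D) = d_m(J) \le d_m(J_j) = d_m({\cal A}_{K_j}^{D_j}) \quad (\forall m \in \mathbb{N}). \]
Combined with Proposition~\ref{prop:312} applied to the pair $(K_j, D_j)$, this yields, for every $j$,
\[ \liminf_{m \to \infty} -\frac{\log d_m({\cal A}_K^D)}{m^{1/n}} \ge 2\pi \left ( \frac{n!}{C(K_j,D_j)} \right )^{1/n}, \]
and letting $j \to \infty$ the bound of the theorem will follow from property (iii) of Lemma~\ref{lem32}.

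No real obstacle should arise at this stage: the substantive analysis has already been completed in Proposition~\ref{prop:312}, which rests on the Bergman--Weil formula together with the exhaustion of $(\hat{K}_D, D)$ by non-degenerate special holomorphic polyhedral condensers provided by Theorem~\ref{thm:Capprox}, and in Lemma~\ref{lem32}, which upgrades general bounded hyperconvex condensers to the regular strictly hyperconvex setting via the $\epsilon$-thickening $K^\epsilon$ and the level-set exhaustion $D_\epsilon(-\delta)$. The present theorem is essentially the packaging of these two ingredients into the final asymptotic, and only requires the elementary monotonicity of $d_m$ under restriction.
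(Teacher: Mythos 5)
Your proposal is correct and follows essentially the same route as the paper: factor the canonical map $J:H^\infty(D)\to A(K)$ through $H^\infty(D_j)\to A(K_j)$ using the exhaustion from Lemma~\ref{lem32}, apply submultiplicativity to get $d_m({\cal A}_K^D)\le d_m({\cal A}_{K_j}^{D_j})$, invoke Proposition~\ref{prop:312} for each $j$, and pass to the limit using $C(K_j,D_j)\to C(K,D)$. No gaps.
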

\begin{proof}
Let $(K_j)$ and $(D_j)$ denote the sequences of sets furnished by Lemma~\ref{lem32}. Then, for 
every  $j$, the set $D_j$ is a strictly hyperconvex domain in $\mathbb{C}^n$ and $K_j$ a regular 
holomorphically convex subset of $D_j$ with 
\[ K\subset \hat{K}_D \subset K_j \subset D_j \subset D\, .\] 
Thus, we have 
\[ H^\infty(D)\hookrightarrow H^\infty( D_j) \hookrightarrow A(K_j) \to 
A(K)\,. \]
Both $H^\infty(D)\hookrightarrow H^\infty( D_j)$ and  
$A(K_j) \to A(K)$ are easily seen to be continuous with norm $1$, so
\[ d_m \left({\cal A}_{K}^{D}\right ) \le d_m\left({\cal A}_{K_j}^{D_j}\right)  \quad 
(\forall m,j\in \mathbb{N})\,, \]
and we deduce
\[ \liminf_{m\to \infty} -\frac{\log d_m\left({\cal A}_{K}^{D}\right)}{m^{1/n}} \ge  \liminf_{m\to \infty} -
\frac{\log d_m\left({\cal A}_{K_j}^{D_j}\right)}{m^{1/n}} \quad (\forall j\in \mathbb{N}) \, .\]
But by Proposition~\ref{prop:312} we have 
\[ \liminf_{m\to \infty} -\frac{\log d_m\left({\cal A}_{K_j}^{D_j}\right)}{m^{1/n}} \ge  
(2\pi) \left(\frac{ n!} {C(K_j, D_j)}\right)^{1/n}\,,\]
and since by Lemma~\ref{lem32} 
the relative capacities $C(K_j,D_j)$ converge to $C(K,D)$ when $j$ tends to infinity, we finally 
deduce that
\[ \liminf_{m\to \infty} -\frac{\log d_m\left({\cal A}_{K}^{D}\right)}{m^{1/n}} \ge (2\pi) \left(\frac{ n!}
{C(K,D)}\right)^{1/n}\, ,\]
which concludes the proof. 
\end{proof}

 \section*{Acknowledgements}
St\'ephanie Nivoche would like to thank Bo Berndtsson for many inspiring discussions during the 
preparation of this article.

\end{document}